% **************************************************
% Document Class Definition
% **************************************************
\documentclass[11pt,reqno]{amsart} 
\usepackage[a4paper,left=24mm,right=24mm,top=32mm,bottom=30mm]{geometry} 

\usepackage[latin1]{inputenc}
\usepackage[ngerman,english]{babel}

\usepackage[T1]{fontenc}
\usepackage{lmodern}
\usepackage{a4wide}
\usepackage{color}
\usepackage{enumerate}
\usepackage{amsmath}%
\usepackage{amsfonts}%
\usepackage{amssymb}%^
\usepackage{amsthm}
\usepackage{amsmath,amsxtra,amssymb,latexsym, amscd,amsthm}
\usepackage{bbm}
\usepackage{empheq}
\usepackage{graphicx}
\usepackage{MnSymbol}
\usepackage{wrapfig}
\usepackage[percent]{overpic}
\usepackage{setspace}
\usepackage{eurosym}
\usepackage{cite}
\usepackage{faktor}
\usepackage{xfrac}
\usepackage{esint}
%\usepackage[activate={true,nocompatibility},final=true,kerning=true,spacing=true,tracking=true,shrink=30,stretch=30,factor=0]{microtype}
%\microtypecontext{spacing=french}

\usepackage{subfigure}

\usepackage{enumitem}
\usepackage{xcolor}

\newtheorem{theorem}{Theorem}[section]
\newtheorem{proposition}[theorem]{Proposition}
\theoremstyle{definition}

\newtheorem{condition}[theorem]{Condition}

\newtheorem{lemma}[theorem]{Lemma}

\newtheorem{remark}[theorem]{Remark}

\numberwithin{equation}{section}

\providecommand{\norm}[1]{\left\lVert #1 \right\rVert}
\providecommand{\abs}[1]{\left\lvert #1 \right\rvert}

 %Referenz war FBP_alpha

\newcommand{\F}{\mathcal F}                % functional F
\newcommand{\eps}{\varepsilon}             % epsilon
\newcommand{\pd}{\partial}                 % partial derivative
                 % material derivative
                % differential d
    % Differential-dGamma
 % Differential-dGamma
            % volume element
              % volume 
            % area
            % index inex
	   % index set #1
                    % nabla
\newcommand{\R}{\mathbb{R}}                % relle Zahlen 
                % relle Zahlen 
\newcommand{\N}{\mathbb{N}}                % natÃ¼rliche Zahlen 
                % ganze Zahlen 
                % functional V
                % functional A
              % convergence 
\newcommand{\SL}{\Delta_{\Gamma}}          % Surface Laplacian
  % Surface Laplacian tilde
\newcommand{\SG}{\nabla_{\Gamma}}          % Surface Gradient/Divergence
          % Hessian
       % tangent space
          % tangent bundle
                     % second ff
\newcommand{\G}{\Gamma}
 	   %C^k auf \Gamma
       %L^p auf \Gamma
	   %H^p auf \Gamma
 % norm on T_p Gamma
 % 

% Mittelwertintegral
  % Matrizen
   % Zeitschritte
   % Eulersche Zahl

\newcommand{\spa}{\operatorname{span}}

\newcommand{\tr}{\operatorname{tr}}

\newcommand{\phit}{\tilde{\varphi}^N}
\newcommand{\ut}{\tilde{u}^N}
\newcommand{\vt}{\tilde{v}^N}
\newcommand{\mut}{\tilde{\mu}^N}
\newcommand{\thetat}{\tilde{\theta}^N}
\newcommand{\solspace}{\mathcal{W}}

\newcommand{\vectthree}[3]{\left(\begin{matrix}#1\\  #2\\ #3 \end{matrix}\right)}

\newcommand{\dprodH}[3]{\left\langle #2, #3 \right\rangle_{H^{-1}(#1),H^1(#1)}}

\title[On a Model for Phase Separation on Biological Membranes]{On a Model for Phase Separation on Biological Membranes and its Relation to the Ohta-Kawasaki Equation}
\author[H. Abels]{Helmut Abels}
\address{Helmut Abels, Fakult\"at f\"ur Mathematik, Universit\"at Regensburg, 93040 Regensburg, Germany, e-mail: helmut.abels@mathematik.uni-regensburg.de}
\author[J. Kampmann]{Johannes Kampmann}
\address{Johannes Kampmann, Fakult\"at f\"ur Mathematik, Universit\"at Regensburg, 93040 Regensburg, Germany, e-mail: johannes.kampmann@mathematik.uni-regensburg.de}

\begin{document}

\begin{abstract}
	We provide a detailed mathematical analysis of a model for phase separation on biological membranes which was recently proposed by Garcke, Rätz, Röger and the second author. The model is an extended Cahn-Hilliard equation which contains additional
	terms to account for the active transport processes.
	We prove results on the existence and regularity of solutions, their long-time behaviour, and	on the existence of stationary solutions. Moreover, we investigate two different asymptotic regimes. We study the case of large cytosolic
	diffusion and investigate the effect of an infinitely large affinity between membrane components. The first case leads to the reduction of coupled bulk-surface equations in the model	to a system of surface equations with non-local contributions. Subsequently, we recover a variant of the well-known Ohta-Kawasaki equation as the limit for infinitely large affinity between membrane components.
\end{abstract}

\keywords{partial differential equations on surfaces, phase separation, Cahn-Hilliard equation, Ohta-Kawasaki equation, reaction-diffusion systems}
\subjclass[2010]{35K52,35Q92,92C37}

\maketitle
\section{Introduction}

Because of their importance for many physical or biological systems, phase separation processes have been thoroughly studied and depending on the concrete application, different mathematical models have been developed to model such behaviour. In material science, common models are the Allen-Cahn \cite{AC} and Cahn-Hilliard equation \cite{CA,CH}, which are based on the Ginzburg-Landau energy, or the Ohta-Kawasaki equation \cite{OK}, which is derived from an additional non-local contribution to the Ginzburg-Landau energy. There are many articles discussing the derivation and properties of these models, e.g. \cite{LQC, CH1d, CHL, BHC} in the case of the Allen-Cahn equation, \cite{NC} and \cite{EZ,T, NST1, NST2} for the Cahn-Hilliard equation, and \cite{ChR, ONIY, RW} in the case of the Ohta-Kawasaki equations.

In contrast to the examples from material science above, models for microphase separation on biological membranes are relatively new. We refer the reader to the overview articles \cite{FS, RL, FSH} for a comprehensive introduction to several phenomenological models. 

One main aspect of phase separation in this context is the emergence of microdomains with a length-scale below the system size. It has been argued that cell membranes are affected by active cellular processes which contribute to this behaviour and effectively keep the phase separation process from reaching its equilibrium \cite{FSH,RL,GSR,Fo}. 

In \cite{GKRR} Garcke et al. derived a model for phase separation on biological membranes from thermodynamics which includes such non-equilibrium contributions.

To the knowledge of the authors, recent contributions emphasize the derivation of models and qualitative behaviour or simulations while neglecting other aspects of a detailed mathematical analysis. For the lipid raft model proposed in \cite{GKRR}, we will carry out such an analysis of its mathematical properties in this paper. In particular, we prove existence and regularity for solutions and give a result establishing a connection between the model in \cite{GKRR} and the well-known Ohta-Kawasaki equations. 

Biological membranes generally consist of bilayers of phospholipid molecules, but can also include other molecules such as cholesterols or proteins. Phospholipids are molecules composed of two hydrophobic fatty acids which are linked through a hydrophilic phosphate group. They arrange themselves in a bilayer, i.e. in two layers of lipid molecules with the hydrophobic tails pointing towards each other.
In eukaryotic cells, such a bilayer cell membrane encloses the cytosol, the cellular fluid inside the cell. 

Cell membranes are highly heterogeneous, containing lipids with either saturated or unsaturated tails as well as cholesterols, proteins and other molecules. The lateral organisation of these different components is important for the functioning of the cell, contributing to protein trafficking, endocytosis, and signalling \cite{FSH2, RL}.  

A lot of attention in this context is given to the emergence of so-called lipid rafts. These rafts are intermediate sized domains ($10-200$  nm), characterized as regions consisting mainly of saturated lipid molecules enriched with cholesterols \cite{PIK}. We refer the reader to the overview \cite{FS} and the list of references therein for a discussion of the experimental evidence for their existence.

From a mathematical point of view, these lipid rafts have the striking feature that they do not merge in such a way that the area of the phase boundary is minimized. Instead, they develop into several finite-size domains. This behaviour differs from other phase separation processes and models for lipid raft formation need to capture this behaviour. 

Due to their structure with a semirigid tail, cholesterol molecules have a strong affinity for saturated lipids, and regions with a high concentration of saturated lipids, which are enriched in cholesterol are much more ordered than regions in which cholesterol is absent \cite{RPVK}. As such, active transport processes of membrane components like cholesterol and lipids must be taken into account as non-equilibrium contributions when discussing lipid raft formation from a thermodynamical point of view. In particular, it has been observed that the formation of lipid rafts is linked to the presence of cholesterols in the membrane \cite{LR}.

Based on this assumption, several theoretical models for the formation of lipid rafts have been proposed, see \cite{FS, RL, FSH}.

\section{Problem Statement and Main Results}
In this section, we give the exact statement of the lipid raft model considered in this paper and present our main results. The results are part of the second author's PhD thesis~\cite{Diss}.

A short introduction to the lipid raft model is sufficient for the mathematical analysis in the present paper. A comprehensive discussion and the full derivation from thermodynamics is given in \cite{GKRR}.

Let $B \subset \R^3$ be a bounded domain with smooth boundary $\G:=\partial B.$ The domain $B$ and the surface $\G$ represent the cell and its outer membrane respectively. The basic quantities in the model are the rescaled relative concentration $\varphi$ of saturated lipids in the membrane, the relative concentration $v$ of membrane-bound cholesterol and the relative concentration $u$ of cytosolic cholesterol. We normalize $\varphi$ such that $\varphi = 1$ represents the pure saturated lipid phase and $\varphi=-1$ within the pure unsaturated lipid phase. Moreover, $v=1$ and $u=1$ correspond to maximal saturation for the cholesterol concentrations.

Let now for $\eps,\delta >0$
\begin{equation}
\label{eq:energy}
\F(v, \varphi) = \int_\Gamma\frac\eps2|\nabla\varphi|^2 +
\eps^{-1}W(\varphi) + \frac1{2\delta}(2v - 1 - \varphi)^2 \ d\mathcal{H}^2,
\end{equation}
with the double-well potential $W(s)=(1-s^2)^2.$ The functional $\F$ consists of two parts. The first part $\int_\Gamma\frac\eps2|\nabla\varphi|^2 +
\eps^{-1}W(\varphi)\ d\mathcal{H}^2$ is a classical Ginzburg-Landau energy, modeling the phase separation between the two lipid phases. The second part $\frac1{2\delta}\int_{\G} (2v - 1 - \varphi)^2\ d\mathcal{H}^2$ accounts for the affinity between saturated lipid molecules and membrane-bound cholesterol.

We now assume that the evolution of the membrane quantities is driven by chemical potentials derived from the functional $\F.$ Namely, we introduce
\begin{align*}
\mu &:= \frac{\delta \F}{\delta \varphi} 
=  - \eps \SL \varphi + \eps^{-1}W'(\varphi) - \delta^{-1}(2v - 1 - \varphi),\\
\theta &:= \frac{\delta \F}{\delta v} = \frac2\delta(2v - 1 - \varphi),
\end{align*}
and say that $\F$ is the surface free energy functional of the model.

We then consider the following bulk--surface system consisting of a surface Cahn--Hilliard equation coupled by an exchange term $q$ to a bulk--diffusion equation,
\begin{alignat}{2}
\label{eq:diffU}
\partial_t u &= D \Delta u &\qquad \text{in } B \times (0,T]&,\\
\label{eq:flux}
- D \nabla u \cdot \nu & = q &  
\text{on } \Gamma \times (0,T]&,\\
\label{eq:CH1}
\pd_t \varphi &= \SL \mu
&\text{on } \Gamma \times (0,T]&,\\
\label{eq:CH2}
\mu &= - \eps \SL \varphi + \eps^{-1}W'(\varphi) - \delta^{-1}(2v - 1 - \varphi)
&\text{on } \Gamma \times (0,T]&,\\
\label{eq:v}
\pd_t v &= \SL \theta + q = \frac4\delta\SL v - \frac2\delta\SL
\varphi + q
&\text{on } \Gamma \times (0,T]&,\\
\theta &=  \frac2\delta(2v - 1 -
\varphi)&\text{on } \Gamma \times (0,T]& \label{eq:theta}
\end{alignat}
with initial conditions for $u$, $\varphi$ and $v$. Here we denote by $\nu$ the outer unit normal vector of $B$ on $\G$, $D>0$ is the diffusion coefficient, and $T\in (0,\infty)$ is arbitrary. The exchange term $q$ will be specified later.

A few comments on the basic ideas included in these equations are in order. From a thermodynamical viewpoint, \eqref{eq:CH1} and \eqref{eq:v} are mass balance equations for the surface quantities. Equations \eqref{eq:diffU} and \eqref{eq:flux} model the evolution of the cytosolic cholesterol by a simple diffusion equation with diffusion coefficient $D>0.$ The important part is the inclusion of Neumann boundary conditions for the cytosolic diffusion. Depending on the characterization of the exchange term $q$, the cholesterol flux from the cytosol $B$ onto the membrane $\G$ appears as a source term for the evolution of the membrane-bound cholesterol $v$ in equation \eqref{eq:v}. Equation \eqref{eq:v} also includes a cross-diffusion, which stems from the cholesterol-lipid affinity in the surface energy $\F$. Finally, equations \eqref{eq:CH1} and \eqref{eq:CH2} constitute Cahn-Hilliard dynamics for the lipid concentration and allow for a contribution from the cholesterol evolution via the last term. We note that the parameter $\delta$ effectively controls how much the preferred binding between saturated lipids and cholesterols influences the system.

\begin{remark}\label{rem:thermo_consistent}
	The discussion in \cite{GKRR} shows that the model is thermodynamically consistent for arbitrary constitutive choices for the exchange term $q$. The authors derive the model from mass balance equations for the relative lipid concentration $\varphi$ and the cholesterol concentration $v$ on the surface $\G$ as well as the mass balance for the cytosolic cholesterol concentration $u.$ In both the cholesterol mass balance equation on $\G$ and the cholesterol mass balance equation in $B,$ the exchange term $q$ is treated as an external source. 
\end{remark}
\begin{remark}\label{rem:mass_cons}
	Equation \eqref{eq:CH1} implies that the total mass of surface lipids $\int_{\G} \varphi \ d\mathcal{H}^2$ is constant in time. Similarly, equations \eqref{eq:diffU}, \eqref{eq:flux}, and \eqref{eq:v} yield that the combined total mass of surface and cytosolic cholesterol $\int_B u \ dx + \int_{\G} v \ d\mathcal{H}^2$ is conserved. We will always denote the total lipid and cholesterol mass by $m$ and $M$ respectively, i.e. for all times
	\begin{align}\label{eq:mass_cons}
	\int_{\G} \varphi \ d\mathcal{H}^2 = m, \quad\quad
	\int_B u \ dx + \int_{\G} v \ d\mathcal{H}^2 = M.
	\end{align}
	Moreover, the time derivative of the sum of surface and bulk energy fulfills
	\begin{align}\label{eq:energy_thermo}
	\frac{d}{dt} \left( \mathcal{F}(v(\cdot,t),\varphi(\cdot,t)) + \frac{1}{2}\int_B u(\cdot,t)^2 \right) \leq \int_{\G} q(\theta-u) \ d\mathcal{H}^2.  
	\end{align}
	In particular, whether $\mathcal{F}(v(\cdot,t),\varphi(\cdot,t)) + \frac{1}{2}\int_B u(\cdot,t)^2$ is decreasing in time or not depends on the choice of the exchange term $q.$ It is possible to prove that the energy stays bounded on finite time intervals under suitable growth assumptions on $q.$ This will be the key ingredient for our following existence proof. 
\end{remark}
Define
\[ \solspace := \solspace_B \times \solspace_\Gamma^1 \times \solspace_\Gamma^1\times \solspace_\Gamma^2\times \solspace_\Gamma^2, \]
where
\begin{align*}
\solspace_B  &:= L^2\left(0,T;H^1(B)\right) \cap  H^1\left(0,T;\left(H^{1}(B)\right)'\right),\ \\ 
\solspace_\Gamma^1  &:= L^2(0,T;H^1(\Gamma))\cap H^1(0,T;H^{-1}(\Gamma)),  \text{ and }
\solspace_\Gamma^2  := L^2(0,T;H^1(\Gamma)).
\end{align*}
\begin{theorem}[Existence of Weak Solutions]\label{thm:existence}~\\
	Let $T\in(0,\infty).$ Let $\varphi_0 \in H^1(\Gamma), v_0\in L^2(\Gamma)$ and $u_0 \in L^2(B)$. % be such that
%	\begin{equation*}
%	\F(v_0,\varphi_0) + \frac{1}{2}\int_B u_0^2 < \infty.
%	\end{equation*}
	Moreover, assume that the exchange term $q: \R^2 \rightarrow \R$ is continuous and fulfils
	\begin{equation}\label{eq:growth_cond_q}
	\abs{q(u,v)}\leq C(1+\abs{u}+\abs{v}) \qquad \forall \ u,v \in \R
      \end{equation}
      for some $C>0$.
	Then there exist functions $(u,\varphi,v,\mu,\theta) \in \solspace$
	which are a weak solution to problem \eqref{eq:diffU}--\eqref{eq:theta}, i.e. they fulfil for all $\xi \in L^2(0,T;H^1(B))$ and $\eta \in L^2(0,T;H^1(\Gamma))$ the equations
	\begin{align}
	\int_0^T \left\langle \partial_t u, \xi \right\rangle_{\left(H^1(B)\right)',H^1(B)} &= -\int_0^T\int_B \nabla u \cdot \nabla \xi - \int_0^T\int_{\Gamma} q(u,v)\xi,\label{eq:weak_form_diffU} \\
	\int_0^T \dprodH{\G}{\partial_t \varphi}{\eta} &= -\int_0^T\int_{\Gamma} \SG \mu \cdot \SG \eta, \label{eq:weak_form_time_der_phi} \\
	\int_0^T\int_{\Gamma} \mu \eta &= - \int_0^T\int_{\Gamma} \left[\varepsilon \SG \varphi \cdot \SG \eta + \frac{1}{\varepsilon} W'(\varphi)\eta -\frac{1}{\delta}\left( 2v - 1 -\varphi \right)\eta \right],\\
	\int_0^T \dprodH{\G}{\partial_t v}{\eta} &= -\int_0^T\int_{\Gamma} \SG \theta \cdot \SG \eta +\int_0^T\int_{\Gamma} q(u,v)\eta, \\
	\theta  &= \frac{2}{\delta} \left( 2v - 1 - \varphi\right) \text{ a.e. on } \G\times(0,T).\label{eq:weak_form_theta}
	\end{align}
	 The initial values are attained in $L^2(B)$ and $L^2(\G)$ respectively. Moreover,  \begin{align}
	\F(v(\cdot,t),\varphi(\cdot,t)) + \frac{1}{2}\int_B u(\cdot,t)^2
	+ \int_0^t\int_B \frac{D}{2}|\nabla u|^2 + \int_0^t \int_{\Gamma} \left( \abs{\nabla_\Gamma \mu(\cdot,t)}^2 + \abs{\nabla_\Gamma \theta(\cdot,t)}^2 \right) 
	\, \nonumber \\ \leq\, C(\eps,\delta,\Lambda,T,D_0,v_0,\varphi_0,u_0). \label{eq:energy_est_lin_growth}
	\end{align} holds for any $D\geq D_0>0$ and all $t \in (0,T].$
\end{theorem}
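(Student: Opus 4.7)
The plan is to construct a weak solution via a Galerkin approximation and to exploit the thermodynamic structure encoded in the energy inequality \eqref{eq:energy_thermo} to obtain the required a priori estimates.

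First, I would choose a Galerkin scheme adapted to the bulk-surface geometry: as bulk basis the $L^2(B)$-orthonormal eigenfunctions of $-\Delta$ on $B$ with homogeneous Neumann boundary data (so that traces on $\Gamma$ are well defined and the flux condition \eqref{eq:flux} can be incorporated weakly), and as surface basis the eigenfunctions of $-\Delta_\Gamma$ on $\Gamma$. Projecting \eqref{eq:diffU}--\eqref{eq:theta} onto the finite-dimensional subspaces and eliminating $\mu^N$ and $\theta^N$ via the algebraic identities \eqref{eq:CH2} and \eqref{eq:theta} yields a system of ODEs with locally Lipschitz (in fact continuous) right-hand side; local existence of $(u^N,\varphi^N,v^N)$ follows from the Peano/Picard theorem.

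The heart of the argument is the a priori estimate \eqref{eq:energy_est_lin_growth} at the Galerkin level. Testing the discrete versions of \eqref{eq:CH1}, \eqref{eq:v} with $\mu^N$ and $\theta^N$, and \eqref{eq:diffU} with $u^N$, and combining the resulting identities reproduces \eqref{eq:energy_thermo} in integrated form. The growth condition \eqref{eq:growth_cond_q} yields
\[
\Bigl|\int_\Gamma q(u^N,v^N)(\theta^N-u^N)\,d\mathcal{H}^2\Bigr|\le C\int_\Gamma (1+|u^N|+|v^N|)(|v^N|+|\varphi^N|+|u^N|+1)\,d\mathcal{H}^2,
\]
where I used $\theta^N=\tfrac2\delta(2v^N-1-\varphi^N)$. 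The boundary contributions involving $u^N$ are controlled by the trace-interpolation inequality $\|u^N\|_{L^2(\Gamma)}^2\le \eta\|\nabla u^N\|_{L^2(B)}^2+C_\eta\|u^N\|_{L^2(B)}^2$, so the bad terms can be absorbed into the dissipation $\int_0^t\int_B \tfrac{D_0}{2}|\nabla u^N|^2$. After a Gronwall argument, one obtains the full estimate \eqref{eq:energy_est_lin_growth} uniformly in $N$, which also guarantees global existence of the Galerkin solutions on $[0,T]$. The estimate for $\mu^N$ in $L^2(0,T;H^1(\Gamma))$ comes from $\nabla_\Gamma\mu^N\in L^2$ together with a bound on the spatial mean obtained by testing \eqref{eq:CH2} with the constant function $1$ (using the polynomial growth of $W'$ and the $H^1(\Gamma)$-bound on $\varphi^N$).

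Second, I would estimate the time derivatives: the weak formulations of \eqref{eq:diffU}--\eqref{eq:flux}, \eqref{eq:CH1}, \eqref{eq:v} give $\partial_t u^N\in L^2(0,T;(H^1(B))')$ and $\partial_t\varphi^N,\partial_t v^N\in L^2(0,T;H^{-1}(\Gamma))$ in terms of the quantities already controlled. By the Aubin--Lions lemma applied to $H^1(B)\hookrightarrow L^2(B)\hookrightarrow (H^1(B))'$ and $H^1(\Gamma)\hookrightarrow L^2(\Gamma)\hookrightarrow H^{-1}(\Gamma)$, I extract a subsequence with $u^N\to u$ strongly in $L^2(0,T;L^2(B))$ and $\varphi^N\to\varphi$, $v^N\to v$ strongly in $L^2(0,T;L^2(\Gamma))$, plus a.e. convergence. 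A continuity-of-trace argument (using the uniform $L^2(0,T;H^1(B))$-bound and compactness of the trace operator into $L^2(0,T;L^2(\Gamma))$ by the Lions--Magenes/Aubin refinement) yields strong convergence of $u^N|_\Gamma$ in $L^2(0,T;L^2(\Gamma))$.

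Finally, with these strong convergences I pass to the limit in the nonlinear terms: the cubic $W'(\varphi^N)\to W'(\varphi)$ in $L^2$ by dominated convergence using the $L^\infty(0,T;H^1(\Gamma))\hookrightarrow L^\infty(0,T;L^p(\Gamma))$ bound on $\varphi^N$ for any $p<\infty$, and $q(u^N,v^N)\to q(u,v)$ in $L^2(0,T;L^2(\Gamma))$ by the continuity of $q$, the linear growth \eqref{eq:growth_cond_q}, the Vitali convergence theorem and the strong $L^2$-convergence of traces of $u^N$ and of $v^N$. The linear terms pass to the limit by weak convergence. The algebraic identity \eqref{eq:weak_form_theta} is stable under weak limits. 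Attainment of the initial data follows from $u\in C([0,T];L^2(B))$ and $\varphi,v\in C([0,T];L^2(\Gamma))$, both consequences of the space-time regularity in $\mathcal{W}$. The main obstacle I anticipate is step three: correctly coupling the bulk trace $u^N|_\Gamma$ into the surface energy estimate while keeping enough dissipation to absorb the boundary terms produced by $q$, and proving strong convergence of $u^N|_\Gamma$ on $\Gamma\times(0,T)$ so that $q(u^N,v^N)$ converges strongly.
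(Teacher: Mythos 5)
Your proposal follows essentially the same route as the paper's proof: a Galerkin scheme with Neumann eigenfunctions in $B$ and Laplace--Beltrami eigenfunctions on $\Gamma$, the energy identity with right-hand side $\int_\Gamma q(u^N,v^N)(\theta^N-u^N)$ controlled via the linear growth of $q$, the trace-interpolation inequality and absorption into the bulk dissipation followed by Gronwall, time-derivative bounds with Aubin--Lions compactness into an intermediate space $H^s(B)$, $s>1/2$, to get strong trace convergence, and continuity of the superposition operators to pass to the limit in $q$ and $W'$. The only point treated more lightly than in the paper is the identification $u(0)=u_0$ (the paper verifies it by testing with functions not vanishing at $t=0$), but this is a standard detail and does not affect the correctness of the argument.
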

The proof of Theorem \ref{thm:existence} will be given in Section \ref{sec:proof_ex} below.
\begin{remark}[Uniqueness]
  If additionally to the assumptions above $q$ is globally Lipschitz continuous, one can prove in a straight forward manner that weak solutions are unique. To this end one tests \eqref{eq:diffU} for the difference $u=u_1-u_2$ of two weak solutions $(u_j, \varphi_j, v_j, \mu_j,\theta_j)$, $j=1,2$, with $u$ in $L^2(B)$, tests \eqref{eq:CH1} with $\Delta^{-1}\varphi=\Delta^{-1}(\varphi_1-\varphi_2)$ in $L^2(\Gamma)$, \eqref{eq:CH2} with $\varphi$ and \eqref{eq:v} with $v=v_1-v_2$. Moreover, one uses compactness of the trace operator $\tr_{\Gamma}\colon H^1(B)\to L^2(\Gamma)$ together with Ehrling's Lemma and of course Gronwall's inequality. Uniqueness also holds true if $q$ is only locally Lipschitz continuous and the weak solutions are essentially bounded. The existence of bounded weak solutions follows from the next result under suitable assumptions.
\end{remark}

Once we know that solutions exist on any finite time interval, we can address their regularity. Provided that the exchange term $q$ fulfils additional growth assumptions, we obtain higher regularity for solutions to the lipid raft model.
\begin{theorem}[Higher regularity]\label{thm:higher_reg}~\\
	Let $u_0, v_0$ and $\varphi_0$ be as in Theorem \ref{thm:existence}. Assume that $q \in C^1(\R^2)$ such that
	\begin{align}
	\abs{q(u,v)} &\leq C(1+\abs{u}+\abs{v}) \nonumber, \\
	\label{eq:higher_reg_growth_assumption1}
	\abs{D_u q(u,v)}, \abs{D_v q(u,v)}&\leq C\left(1+\abs{u}^{2/3}+\abs{v}\right) \quad \forall \ u,v \in \R
	\end{align}
        for some $C>0$.
	Then three exists a weak solution $(u,\varphi,v,\mu,\theta)$ to problem \eqref{eq:diffU}--\eqref{eq:theta} such that
	\begin{align*}
	u &\in L^2(0,T;H^{2}(B)), \nonumber \\ v &\in L^\infty(0,T;H^{1}(\Gamma))\cap L^2(0,T;H^{3}(\Gamma)), \nonumber \\  \varphi &\in L^\infty(0,T;H^{2}(\Gamma))\cap L^2(0,T;H^{5}(\Gamma)), \\ \quad \mu &\in L^\infty(0,T;H^{3}(\Gamma))\cap L^2(0,T;H^{5}(\Gamma)), \text{ and } \nonumber \\  \theta &\in L^\infty(0,T;H^{2}(\Gamma))\cap L^2(0,T;H^{3}(\Gamma)) \nonumber
	\end{align*}
	as well as
	\begin{align*}
	&\partial_t u \in L^\infty(0,T;L^2(B))\cap L^2(0,T;H^1(B)),\\
	&\partial_t \varphi \in L^\infty(0,T;H^1(\Gamma))\cap L^2(0,T;H^3(\Gamma)), \text{ and }\\
	&\partial_t \theta \in L^\infty(0,T;L^2(\Gamma))\cap L^2(0,T;H^1(\Gamma)).
	\end{align*}
\end{theorem}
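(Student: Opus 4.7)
My plan is to upgrade the Faedo--Galerkin approximations used in the proof of Theorem~\ref{thm:existence} with additional a priori estimates and then pass to the limit by weak compactness. All formal manipulations below become rigorous at the Galerkin level, where the approximate solutions are smooth in time on each finite-dimensional subspace. The main task is to derive time-derivative bounds; once these are in hand, the claimed spatial regularities follow by iterating elliptic regularity on the stationary equations.

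First I would derive the time-derivative estimates by simultaneously testing the three time-differentiated equations. Applying $\partial_t$ to \eqref{eq:CH2} and multiplying by $\partial_t\varphi=\SL\mu$ yields after integration by parts
\begin{equation*}
\tfrac12\tfrac{d}{dt}\!\int_{\G}|\SG\mu|^2 + \eps\!\int_{\G}|\SG\partial_t\varphi|^2 + \!\int_{\G}\!\bigl[\eps^{-1}W''(\varphi)+\delta^{-1}\bigr]|\partial_t\varphi|^2 = \tfrac{2}{\delta}\!\int_{\G}\partial_tv\,\partial_t\varphi,
\end{equation*}
while applying $\partial_t$ to \eqref{eq:v} and testing with $\partial_t\theta=\tfrac{4}{\delta}\partial_tv-\tfrac{2}{\delta}\partial_t\varphi$ gives
\begin{equation*}
\tfrac12\tfrac{d}{dt}\!\int_{\G}|\SG\theta|^2 + \tfrac{4}{\delta}\!\int_{\G}|\partial_tv|^2 = \tfrac{2}{\delta}\!\int_{\G}\partial_t\varphi\,\partial_tv + \!\int_{\G}q(u,v)\,\partial_t\theta.
\end{equation*}
In parallel, differentiating \eqref{eq:diffU}--\eqref{eq:flux} in $t$ and testing with $\partial_tu$ produces
\begin{equation*}
\tfrac12\tfrac{d}{dt}\|\partial_tu\|_{L^2(B)}^2 + D\|\D\partial_tu\|_{L^2(B)}^2 = -\!\int_{\G}\!\bigl(D_uq\,\partial_tu+D_vq\,\partial_tv\bigr)\partial_tu.
\end{equation*}
Adding the three identities, the cross-terms in $\partial_tv\,\partial_t\varphi$ are absorbed via Young and the Poincar\'e inequality for the mean-free function $\partial_t\varphi$ (whose spatial mean vanishes by conservation of $\int_{\G}\varphi$), the $W''$-term is controlled by $\varphi\in L^\infty(0,T;\H{1})\hookrightarrow L^\infty(0,T;\L{p})$ on the two-dimensional surface, and the $q$-terms are handled using \eqref{eq:higher_reg_growth_assumption1} together with the trace inequality. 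A Gronwall argument then yields $\partial_tu\in L^\infty(0,T;L^2(B))\cap L^2(0,T;H^1(B))$, $\partial_tv\in L^2(0,T;\L{2})$, $\partial_t\varphi\in L^2(0,T;\H{1})$, and $\SG\mu,\SG\theta\in L^\infty(0,T;L^2(\G))$.

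Once these time-derivative bounds are available, the higher spatial regularity follows by an iterative elliptic bootstrap. Elliptic regularity for the Neumann problem $-D\Delta u=-\partial_tu$ with flux $q(u,v)$ gives $u\in L^2(0,T;H^2(B))$. Solving \eqref{eq:theta} algebraically for $v$ in terms of $\theta$ and $\varphi$ transfers the $\theta$-regularity to $v$, while $\SL\mu=\partial_t\varphi$ (in the mean-free sense) transfers the $\partial_t\varphi$-regularity to $\mu$. Equation \eqref{eq:CH2} then reads as an elliptic equation for $\varphi$ with right-hand side $\mu-\eps^{-1}W'(\varphi)+\delta^{-1}(2v-1-\varphi)$; the algebra property of $H^s(\G)$ for $s>1$ on the two-dimensional surface $\G$ allows the cubic nonlinearity $W'(\varphi)$ to be treated as a term of the same regularity as $\varphi$, so the bootstrap closes. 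A second round of time-differentiated estimates in the improved scales, combined with $\partial_t\theta=\tfrac{4}{\delta}\partial_tv-\tfrac{2}{\delta}\partial_t\varphi$, yields the remaining $L^\infty$ and $L^2$ bounds stated in the theorem.

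The hard part is closing the Gronwall argument at the first step, specifically absorbing the boundary integral $\int_{\G}D_uq(u,v)|\partial_tu|^2$ into the parabolic dissipation $D\|\D\partial_tu\|_{L^2(B)}^2$ when $u$ is only known to lie in $L^\infty(0,T;L^2(B))\cap L^2(0,T;H^1(B))$. The growth exponent $2/3$ in \eqref{eq:higher_reg_growth_assumption1} is precisely calibrated for this: the parabolic Gagliardo--Nirenberg embedding $L^\infty(L^2)\cap L^2(H^1)\hookrightarrow L^{10/3}(B\times(0,T))$, combined with the trace $H^1(B)\hookrightarrow L^4(\G)$ and interpolation in time, is just strong enough to absorb the boundary term when $D_uq$ grows like $|u|^{2/3}$. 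A larger exponent in the assumption would break the estimate.
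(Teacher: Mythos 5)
Your overall architecture (Galerkin approximation, differentiate the approximate system in time, energy estimate, elliptic bootstrap) matches the paper's, and you correctly identify that the exponent $2/3$ in \eqref{eq:higher_reg_growth_assumption1} is exactly what makes the boundary term involving $D_u q$ absorbable via trace and interpolation. However, your first-round estimate does not close as described. After adding your three identities the right-hand side contains $\tfrac{4}{\delta}\int_\G \partial_t v\,\partial_t\varphi$ (your two cross terms have the same sign, so they add up rather than cancel) and the indefinite term $\eps^{-1}\int_\G W''(\varphi)\abs{\partial_t\varphi}^2$ (note $W''\geq -4$, not $\geq 0$). Both produce, after Young and Gagliardo--Nirenberg, contributions of the form $c(t)\norm{\partial_t\varphi}_{L^2(\G)}^2$ with $c(t)$ of order $\delta^{-1}$ resp.\ $\eps^{-1}\norm{W''(\varphi)}_{L^p(\G)}$, i.e.\ not small. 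The only quantities you offer to absorb them are the dissipation terms $\eps\norm{\SG\partial_t\varphi}_{L^2}^2+\delta^{-1}\norm{\partial_t\varphi}_{L^2}^2$, whose coefficients are fixed, and Poincar\'e's inequality cannot convert a large multiple of $\norm{\partial_t\varphi}_{L^2}^2$ into a small multiple of $\eps\norm{\SG\partial_t\varphi}_{L^2}^2$; your Gronwall energy $\norm{\SG\mu}^2+\norm{\SG\theta}^2+\norm{\partial_t u}^2$ never enters the absorption. The missing ingredient is to use $\partial_t\varphi=\SL\mu$ in dual form, e.g.\ $\norm{\partial_t\varphi}_{L^2}^2\leq \norm{\partial_t\varphi}_{H^{-1}}\norm{\partial_t\varphi}_{H^1}\leq \norm{\SG\mu}_{L^2}\norm{\partial_t\varphi}_{H^1}$, which splits the bad terms into a small multiple of the dissipation plus a multiple of the energy; this duality step is precisely what the paper builds in when it rewrites $\int W''(\varphi^N)\partial_t\varphi^N\,\partial_t^2\varphi^N$ as $\int\SG\bigl(W''(\varphi^N)\partial_t\varphi^N\bigr)\cdot\SG\partial_t\mu^N$ and controls it with the $L^1(0,T)$ functions $F^N$, $M^N$.

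More importantly, even a repaired first round only yields $\partial_t\varphi\in L^2(0,T;H^1(\G))$, $\partial_t\theta\in L^2(0,T;L^2(\G))$ and $\SG\mu,\SG\theta\in L^\infty(0,T;L^2(\G))$, whereas the theorem requires $\partial_t\varphi\in L^\infty(0,T;H^1)\cap L^2(0,T;H^3)$ and $\partial_t\theta\in L^\infty(0,T;L^2)\cap L^2(0,T;H^1)$; these are what drive the $L^\infty$-in-time spatial regularity of $\varphi,\mu,\theta$. You defer them to ``a second round of time-differentiated estimates in the improved scales'', but that second round is exactly where the real work lies: it is the paper's Lemma \ref{l:higher_reg_energy_time_der} (testing the differentiated system with $\partial_t u^N$, $\partial_t\mu^N$, $\partial_t^2\varphi^N$, $\partial_t\theta^N$), whose closure depends on the uniform-in-$N$, $L^1$-in-time controls of $\norm{\varphi^N}_{L^\infty(\G)}^4$ and of $\norm{D_u q}_{L^3(\G)}^6$, $\norm{D_u q}_{L^4(\G)}^2$, $\norm{D_v q}_{L^4(\G)}^2$ furnished by Lemma \ref{l:higher_reg_growth_to_int}; asserting it without detail leaves the core of the theorem unproved. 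Finally, the bootstrap claim that the Neumann problem immediately gives $u\in L^2(0,T;H^2(B))$ is premature: with $q(u,v)$ only in $L^2(\G)$ one obtains at most $H^{3/2}(B)$; reaching $H^2(B)$ needs $q(u,v)\in L^2(0,T;H^{1/2}(\G))$, which becomes available only after first improving $u|_\G$ and $v$ and using \eqref{eq:higher_reg_growth_assumption1} to bound $\SG q(u,v)$ in $L^2(0,T;L^{4/3}(\G))$ --- the intermediate loop the paper runs and your sketch omits.
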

The proof of Theorems \ref{thm:higher_reg} is postponed to Section \ref{sec:proof_reg}.

In the modelling process, the parameter $D$ was the diffusion constant associated with the cytosolic diffusion. This diffusion is often much higher than the lateral diffusion on the cell membrane. The energy bound \eqref{eq:energy_est_lin_growth} implies that $\norm{\nabla u}_{L^2((0,T);L^2(B))} \rightarrow 0$ as $D \rightarrow \infty.$ Hence we expect $u$ to be spatially constant in the limit $D\rightarrow \infty.$  Thus it is reasonable to view the limit $D\rightarrow\infty$ as a reduction of the system \eqref{eq:diffU}--\eqref{eq:theta}. 

 If we formally send $D\rightarrow\infty$ in \eqref{eq:diffU}--\eqref{eq:theta}, we derive the system
\begin{alignat}{2}
\label{eq:flux_red}
\partial_t u &= -\frac{1}{\abs{B}}\int_{\Gamma}q(u,v) &\quad & \text{ for } t \in (0,T],\\
\label{eq:CH1_red}
\pd_t \varphi &= \SL \mu
&&\text{ on } \Gamma \times (0,T],\\
\label{eq:CH2_red}
\mu &= - \eps \SL \varphi + \eps^{-1}W'(\varphi) - \delta^{-1}(2v - 1 - \varphi)
&&\text{ on } \Gamma \times (0,T],\\
\label{eq:v_red}
\pd_t v &= \SL \theta + q(u,v)  \qquad
&&\text{ on } \Gamma \times (0,T],\\
\theta &= \frac2\delta(2v - 1 -
\varphi)&&\text{ on } \Gamma \times (0,T]. \label{eq:theta_red}
\end{alignat}
In the resulting system, $u$ is spatially constant and its evolution in time is governed by an ordinary differential equation which is coupled to the surface diffusion for $v.$ Hence we reduced the coupled bulk-surface system into a system of surface equations with nonlocal contributions, namely via the integral on the right-hand side of \eqref{eq:flux_red}.

Based on the energy estimate \eqref{eq:energy_est_lin_growth}, we have the following rigorous convergence result as $D\rightarrow \infty$.
\begin{proposition}\label{prop:conv_existence_reduced_model}
	Let $\lbrace D_n \rbrace_{n\in\N} \subset (0,\infty)$ be a sequence with $\lim_{n\rightarrow \infty} D_n = \infty$ and denote by $(u^{D_n}, \varphi^{D_n}, v^{D_n}, \theta^{D_n}, \mu^{D_n})$ the weak solution from Theorem \ref{thm:existence} with $D=D_n$ and initial data independent of $n$. Then there exists a subsequence (again denoted by $\lbrace D_n \rbrace_{n\in\N}$) such that
	\begin{align*}
	u^{D_n} &\rightharpoonup u \text{ in } L^2(0,T;H^1(B))\cap H^1\left(0,T;\left(H^{1}(B)\right)'\right) \text{ with } u(t) \in \R \forall t \in (0,T), \\
          \varphi^{D_n} &\rightharpoonup \varphi \text{ in } L^2(0,T;H^1(\Gamma))\cap H^1(0,T;H^{-1}(\Gamma)), \\
          	v^{D_n} &\rightharpoonup v \text{ in } L^2(0,T;H^1(\Gamma))\cap H^1(0,T;H^{-1}(\Gamma)),\\
          \theta^{D_n} &\rightharpoonup \theta \text{ in } L^2(0,T;H^1(\Gamma)), \\
          	\mu^{D_n} &\rightharpoonup \mu \text{ in } L^2(0,T;H^1(\Gamma))
	\end{align*}
	and such that the limit functions are weak solution to the reduced problem \eqref{eq:flux_red}--\eqref{eq:theta_red}, i.e. they fulfil for all $\eta \in L^2(0,T;H^1(\Gamma))$ the equations
	\begin{align*}
	\partial_t u  &= - \frac{1}{\abs{B}} \int_{\Gamma} q(u,v) \qquad \text{for a.e. }t\in (0,T), \\ %\label{eq:u_reduced_general} \\
	\int_0^T\dprodH{\G}{\partial_t \varphi}{\eta} &= -\int_0^T\int_{\Gamma}  \SG \mu \cdot \SG \eta, \\
	\int_0^T\int_{\Gamma} \mu \eta &= - \int_0^T\int_{\Gamma} \varepsilon \SG \varphi \cdot \SG \eta + \frac{1}{\varepsilon} \int_0^T\int_{\Gamma} W'(\varphi)\eta - \frac{1}{\delta}\int_0^T\int_{\Gamma} \left( 2v - 1 -\varphi \right)\eta, \\
	\int_0^T \dprodH{\G}{\partial_t v}{\eta} &= -\int_0^T\int_{\Gamma} \SG \theta \cdot \SG \eta +\int_0^T\int_{\Gamma} q(u,v)\eta, \\
	\int_0^T\int_{\Gamma} \theta \eta &= \frac{2}{\delta}\int_{\Gamma} \left( 2v - 1 - \varphi\right)\eta.
	\end{align*}
	The initial values are attained in $L^2(B)$ and $L^2(\G)$ respectively.
\end{proposition}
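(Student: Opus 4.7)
The plan is to use the uniform energy estimate \eqref{eq:energy_est_lin_growth} to extract weak limits and then pass to the limit in each equation of \eqref{eq:weak_form_diffU}--\eqref{eq:weak_form_theta}; the only non-trivial issue will be the exchange term $q(u^{D_n},v^{D_n})$. First I would collect uniform bounds: since $D_n\geq D_0>0$ eventually, \eqref{eq:energy_est_lin_growth} yields uniform bounds for $u^{D_n}$ in $L^\infty(0,T;L^2(B))\cap L^2(0,T;H^1(B))$, for $\varphi^{D_n}$ in $L^\infty(0,T;H^1(\Gamma))$, for $v^{D_n}$ in $L^\infty(0,T;L^2(\Gamma))$, and for $\nabla_\Gamma\mu^{D_n}$, $\nabla_\Gamma\theta^{D_n}$ in $L^2(0,T;L^2(\Gamma))$. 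Control of the means of $\mu^{D_n}$ and $\theta^{D_n}$ (by testing \eqref{eq:CH2} with the constant function $1$ and using \eqref{eq:theta} with mass conservation) followed by Poincar\'e upgrades these to bounds in $L^2(0,T;H^1(\Gamma))$. The growth condition on $q$ gives a uniform $L^2$ bound on $q(u^{D_n},v^{D_n})$, so the equations themselves control $\partial_t u^{D_n}$ in $L^2(0,T;(H^1(B))')$ and $\partial_t\varphi^{D_n}$, $\partial_t v^{D_n}$ in $L^2(0,T;H^{-1}(\Gamma))$. Standard weak and weak-$*$ compactness then yield, along a subsequence, the convergences claimed in the proposition. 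Crucially, \eqref{eq:energy_est_lin_growth} also gives $\|\nabla u^{D_n}\|_{L^2(0,T;L^2(B))}^2 \leq 2C/D_n \to 0$, so by weak lower semicontinuity $\nabla u\equiv 0$, and hence $u(\cdot,t)$ is spatially constant for almost every $t$.

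Next I would pass to the limit in the weak formulations. All linear terms pass by weak convergence. The delicate step is the nonlinearity $q(u^{D_n},v^{D_n})$ on $\Gamma$. For $v^{D_n}$ the Aubin--Lions--Simon lemma applied with $H^1(\Gamma)\hookrightarrow\hookrightarrow L^2(\Gamma)\hookrightarrow H^{-1}(\Gamma)$ gives strong convergence in $L^2(0,T;L^2(\Gamma))$. For $u^{D_n}$ I would apply Aubin--Lions--Simon with the compact trace map $H^1(B)\to L^2(\Gamma)$ (factoring through $H^1(B)\to H^{1/2}(\Gamma)\hookrightarrow\hookrightarrow L^2(\Gamma)$) and the continuous embedding $L^2(\Gamma)\hookrightarrow (H^1(B))'$ via the trace, obtaining $u^{D_n}|_\Gamma\to u|_\Gamma$ strongly in $L^2(0,T;L^2(\Gamma))$. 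After passing to a further subsequence we have a.e.\ convergence on $\Gamma\times(0,T)$; continuity of $q$, together with the uniform $L^2$ bound from \eqref{eq:growth_cond_q} and Vitali's convergence theorem, then gives $q(u^{D_n},v^{D_n})\to q(u,v)$ strongly in $L^r(\Gamma\times(0,T))$ for every $r<2$, which is more than enough. The cubic term $W'(\varphi^{D_n})$ is treated analogously using $H^1(\Gamma)\hookrightarrow\hookrightarrow L^p(\Gamma)$ for every $p<\infty$.

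For the reduced equation \eqref{eq:flux_red}, since $u(\cdot,t)\in\R$ for a.e.\ $t$, I would test the limit of \eqref{eq:weak_form_diffU} with $\xi(x,t)=\eta(t)$ for arbitrary $\eta\in C_c^\infty(0,T)$: the gradient term vanishes and, using that $u$ is independent of $x$, the identity reduces to $|B|\,\partial_t u=-\int_\Gamma q(u,v)$ in the distributional sense in $t$, which is exactly \eqref{eq:flux_red}. The initial conditions are inherited from the continuous embeddings $\solspace_B\hookrightarrow C([0,T];L^2(B))$ and $\solspace_\Gamma^1\hookrightarrow C([0,T];L^2(\Gamma))$ (Lions--Magenes), which make the trace $t=0$ a continuous linear functional; since the approximating solutions all share the same initial data, weak convergence in $\solspace_B$, $\solspace_\Gamma^1$ forces $u(0)=u_0$, $\varphi(0)=\varphi_0$, $v(0)=v_0$.

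The hard part is establishing strong convergence of the traces $u^{D_n}|_\Gamma$ in $L^2(0,T;L^2(\Gamma))$: the plain bulk Aubin--Lions argument only gives strong convergence in $L^2(0,T;L^2(B))$, which is insufficient for $q(u^{D_n}|_\Gamma,v^{D_n})$. Using the compact trace $H^1(B)\hookrightarrow\hookrightarrow L^2(\Gamma)$ as the intermediate space in Aubin--Lions--Simon (or, alternatively, first obtaining strong convergence in $L^2(0,T;H^s(B))$ for some $s\in(1/2,1)$ and then invoking the continuous trace $H^s(B)\to L^2(\Gamma)$) resolves this.
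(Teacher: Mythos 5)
Your proposal is correct and follows essentially the same route as the paper: the uniform estimate \eqref{eq:energy_est_lin_growth} for $D\geq D_0$, the observation that $\|\nabla u^{D_n}\|_{L^2(0,T;L^2(B))}^2\leq C/D_n\to 0$ so that the limit is spatially constant, control of the means of $\mu^{D_n},\theta^{D_n}$ plus Poincar\'e, Aubin--Lions compactness for $\varphi^{D_n},v^{D_n}$ and for the bulk variable, a.e.\ convergence/Nemytskii-type arguments for $q$ and $W'$, and finally spatially constant test functions in the bulk equation to obtain the ODE for $u$. The only caveat is that of your two suggested mechanisms for strong trace convergence, the second one (compactness in $L^2(0,T;H^s(B))$, $s\in(1/2,1)$, followed by the continuous trace $H^s(B)\to L^2(\Gamma)$) is the rigorous one and is exactly what the paper does, whereas the first variant treats the trace operator as if it were an embedding in the Aubin--Lions triple, which is not a standard application of that lemma.
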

The proof can be found in Section \ref{sec:proof_D}.

The motivation behind the model \eqref{eq:diffU}--\eqref{eq:theta} was the formation of lipid rafts in biological membranes, i.e. to derive evolution equations that display mesoscale patterns as time evolves. It is thus a natural question to study the qualitative behaviour of the model \eqref{eq:diffU}--\eqref{eq:theta}. In \cite{GKRR}, the authors identified two different qualitative regimes, based on the choice of the exchange term $q.$ 

The inequality \eqref{eq:energy_thermo} allows to identify two different classes of constitutive laws for the exchange term $q.$ Every constitutive law which implies that $\int_{\G} q(\theta-u)$ is non-positive also implies that the energy of the coupled system is decreasing. In this case, we expect the evolution to approach an equilibrium of $\F$ as $t\rightarrow\infty.$ Hence choices for $q$ that lead to a decreasing energy such as $q(u,v)=-c(\theta-u)$ for $c \geq 0$ are referred to as equilibrium cases.

On the other hand, there are choices for $q$ such that $\int_{\G} q(\theta-u)$ does not need to be non-positive. For systems including such an exchange term $q$, it is not reasonable to expect the evolution to converge to an equilibrium point of $\F$ as $t\rightarrow\infty,$ as it is a priori not certain that solutions exist for all times or that $\F$ is bounded in time. Hence these systems are called non-equilibrium models. 

One possible approach leading to a non-equilibrium model is to see the cholesterol attachment to the membrane as a ''reaction'' between free sites on the membrane, namely regions of low membrane-bound cholesterol concentration $v$ and the cytosolic cholesterol, whereas the detachment from the membrane can be considered to be proportional to $v.$ This was suggested in \cite{GKRR} and results in the constitutive choice
\begin{equation}\label{eq:def_q}
q(u,v):= c_1 u(1-v)-c_2v 
\end{equation}
with positive constants $c_1, c_2 \in \R.$  
\begin{remark}\label{rem:boundedness_u_red_model}
		We note that the exchange term $q$ as in \eqref{eq:def_q} does not fulfil the linear growth condition required in Theorem \ref{thm:existence}. However, for $M$ as in Remark \ref{rem:mass_cons} and a smooth, monotone increasing and uniformly bounded function $\eta:\R\rightarrow\R$ such that $\eta(s)=s$ for $\abs{s}\leq M \abs{B}^{-1}$ we can define an alternative exchange term $\tilde{q}$ as
			\[ \tilde{q}(u,v)=c_1u-c_1\eta(u)v-c_2v. \]
			We note that $\tilde{q}$ fulfils the linear growth assumption and coincides with 
			\[ q(u,v)=c_1u(1-v)-c_2v = c_1u - c_1 u v - c_2 v \]
                        if $0\leq u |B| \leq M$. For solutions of the reduced model the latter conditions is preserved in time, which can be seen as follows.
			The mass conservation \eqref{eq:mass_cons} carries over to the reduced model and allows us to find the specific reformulation
			\begin{align}
			\frac{d}{dt} \int_B u(t) \ dx &= -\int_{\G} \tilde{q}(u,v) \nonumber \\
			&= -\frac{c_1\abs{\G}}{\abs{B}}\int_B u(t) \ dx + \left(c_1\eta\left(\frac{1}{\abs{B}}\int_B u(t) \ dx \right)+c_2\right)\left( \int_{\G} v(t) \ d\mathcal{H}^2 \right)\nonumber\\
			&= -\frac{c_1\abs{\G}}{\abs{B}}\int_B u(t) \ dx + \left(c_1\eta\left(\frac{1}{\abs{B}}\int_B u(t) \ dx \right)+c_2\right)\left( M - \int_B u(t) \ dx \right) \label{eq:ode_u}
			\end{align}  
			of the ordinary differential equation for $u.$ Thus the equation is actually independent of $v.$ The right-hand side is strictly positive if $\int_B u(t) \ dx = 0$ and strictly negative if $\int_B u(t) \ dx = M$. Thus we infer that
			\[ u(t) \in [0,\abs{B}^{-1}M] \text{ for all } t\geq 0\] 
			if the initial data was in this range to begin with. Hence for suitable initial data, we actually have 
			\[ \tilde{q}(u,v)=q(u,v) \text{ for all } t \geq 0. \]
			We thus continue to consider the specific form $q(u,v)=c_1u(1-v)-c_2v$ in the reduced model as a prime example for the non-equilibrium case.
\end{remark}

 In \cite{GKRR}, the authors present numerical simulations which allow to compare the qualitative behaviour for the reduced model in the equilibrium and non-equilibrium case. 

In the equilibrium case, the simulations display the saturated lipids clustered in one connected domain, in contrast to the complex patterns observed in the formation of lipid rafts. On the other hand, the non-equilibrium case \eqref{eq:def_q} exhibits the emergence of patterns similar to the formation of lipid rafts, see \cite[Figures 3 and 11]{GKRR}.  As such, the choice \eqref{eq:def_q} will be treated as a prime example of a non-equilibrium system throughout this paper.

Furthermore, it turns out that the reduced system in the non-equilibrium case displays a surprising relationship to the so-called Ohta-Kawasaki system arising in the modeling of diblock copolymers. Depending on the initial value of the lipid concentration $\varphi,$ the almost stationary solutions obtained from the simulation display two distinct classes of patterns, with stripe like patterns emerging if the concentration of saturated and unsaturated lipids is balanced, see \cite[Figure 5]{GKRR}. For less balanced initial values, the experiments show patterns with several circular domains, similar to lipid rafts. 

The stationary states of the Ohta-Kawasaki equations display a similar behaviour. Based on further numerical experiments, Garcke et. al.  conjectured in \cite[Section 3.4]{GKRR} that as $\delta \rightarrow 0,$ solutions to the reduced model in the non-equilibrium case $q=c_1u(1-v)-c_2v$ should approach solutions to the Ohta-Kawasaki equations. 

By the following Theorem \ref{prop:conv_OK}, this is actually true for the mean value free parts of the solutions and a slight modification of the Otha-Kawasaki equation. In the following, $P_\G $ denotes the projection onto the mean value free part, i.e. $P_\G  f := f - \frac{1}{\abs{\Gamma}}\int_{\Gamma} f:=f_\G.$ 

We remark that the existence of weak solutions to the reduced problem is due to Proposition \ref{prop:conv_existence_reduced_model}

\begin{theorem}[Convergence to a Modified Ohta-Kawasaki Equation]\label{prop:conv_OK}~\\
	Let the exchange term $q$ be given as in \eqref{eq:def_q}, i.e. \[ q(u,v)=c_1u(1-v)-c_2v \]
	and let $\lbrace \delta_n \rbrace_{n\in\N} \subset (0,\infty)$ be a sequence with $\lim_{n\rightarrow \infty} \delta_n = 0.$ We denote by $(u^{\delta_n}, \varphi^{\delta_n}, \mu^{\delta_n}, \theta^{\delta_n}, v^{\delta_n})$ a weak solution to the reduced problem \eqref{eq:flux_red}--\eqref{eq:theta_red} from Proposition \ref{prop:conv_existence_reduced_model} with $\delta=\delta_n$. We assume that the initial data is independent of $\delta_n$ and in addition that the initial data for $u$ belongs to $[0,M\abs{B}^{-1}].$ Then there exists a subsequence (again denoted by $\lbrace \delta_n \rbrace_{n\in\N}$) such that $\lbrace u^{\delta_n} \rbrace_{n\in\N}$ and the mean value free functions $(\varphi_\G^{\delta_n}, \mu_\G^{\delta_n}, \theta_\G^{\delta_n})$ fulfil
	\begin{align*}
	u^{\delta_n} &\rightharpoonup u \text{ in } H^1(0,T),\\
	\varphi_\G^{\delta_n} &\rightharpoonup \varphi_\G \text{ in } L^2(0,T;H^1(\Gamma))\cap H^1(0,T;H^{-1}(\Gamma)), \\
	\mu_\G^{\delta_n} &\rightharpoonup \mu_\G \text{ in } L^2(0,T;H^1(\Gamma)), \\
	\theta_\G^{\delta_n} &\rightharpoonup \theta_\G \text{ in } L^2(0,T;H^1(\Gamma)),\\
	\delta_n \partial_t \theta_\G^{\delta_n} &\stackrel{*}{\rightharpoonup} 0 \text{ in } L^2(0,T;H^{-1}(\Gamma))
	\end{align*}
	and such that the limit functions are a weak solution to the modified Ohta-Kawasaki equation
        \begin{align*}
          \partial_t \varphi_\G &= \SL \mu_\G, \\
          \frac{5}{4}\mu_\G &=  -\varepsilon \SL \varphi_\G + \frac{1}{\varepsilon} P_\G W'(\varphi_\G) - \frac{1}{2} \sigma, \\
          \SL \sigma &= \frac{c_1u(t)+c_2}{2} \varphi_\G, \qquad \int_\Gamma \sigma=0, 
        \end{align*}
        where $\sigma := \theta_\G-\frac{1}{2}\mu_\G.$,
        together with 
        \begin{equation*}
        \frac{d}{dt}\int_B u(t) = -\frac{c_1}{\abs{B}}\left( \int_B u(t)  \right)^2 + \left( c_1\frac{M-\abs{\Gamma}}{\abs{B}} - c_2 \right)\int_B u(t) +c_2M \text{ on } (0,T].  
        \end{equation*}
        Here the  modified Ohta-Kawasaki equation is understood in the following weak sense:  For all $\eta \in L^2(0,T;H^1(\Gamma))$ 
	\begin{align*}
	\int_0^T \left\langle\partial_t \varphi_\G, \eta\right\rangle &= -\textbf{}\int_{\Gamma} \SG \mu_\G \cdot \SG \eta, \\
	\frac{5}{4} \int_0^T\int_{\Gamma} \mu_\G \eta &= \int_0^T\int_{\Gamma} \varepsilon \SG \varphi_\G \cdot \SG \eta + \frac{1}{\varepsilon} \int_0^T\int_{\Gamma} W'(\varphi_\G)\eta - \frac{1}{2}\int_0^T\int_{\Gamma} \sigma \eta, \text{ and } \\
	- \int_0^T\int_{\Gamma} \SG \sigma \cdot \SG \eta &= \int_0^T\int_{\Gamma} \frac{c_1 u(t) + c_2}{2} \varphi_\G \eta.
	\end{align*}
      \end{theorem}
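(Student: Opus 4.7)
The strategy is to extract $\delta_n$-uniform a priori bounds from the reduced-problem energy identity, pass to weak limits with enough compactness to legitimize the cubic nonlinearity $W'(\varphi)$, and then identify the Ohta-Kawasaki structure by exploiting the algebraic cancellation $\delta^{-1}(2v-1-\varphi) = \theta/2$ together with the auxiliary variable $\sigma := \theta_\G - \tfrac{1}{2}\mu_\G$. A key preliminary observation is that, for $q$ as in \eqref{eq:def_q} and under mass conservation $\int_\G v^{\delta_n} = M - u^{\delta_n}|B|$, the flux $\int_\G q(u^{\delta_n},v^{\delta_n})$ becomes a quadratic polynomial in $u^{\delta_n}$ alone, so \eqref{eq:flux_red} reduces to a closed, Lipschitz, \emph{$\delta_n$-independent} ODE for $u^{\delta_n}$; since the initial data are also $\delta_n$-independent, all $u^{\delta_n}$ coincide with a single function $u\in[0,M|B|^{-1}]$ satisfying precisely the ODE asserted in the theorem. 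With $u^{\delta_n}=u$ uniformly bounded, the energy identity underlying Proposition \ref{prop:conv_existence_reduced_model}, combined with Young's inequality and the structure $q=c_1u-(c_1u+c_2)v$ to absorb the $\int_\G q\theta$ term, produces $\delta_n$-uniform bounds on $\varphi^{\delta_n}$ in $L^\infty(0,T;\H{1})$, on $\SG\mu^{\delta_n}$ and $\SG\theta^{\delta_n}$ in $L^2(0,T;\L{2})$, and on $\tfrac{1}{2\delta_n}\int_\G(2v^{\delta_n}-1-\varphi^{\delta_n})^2$ in $L^\infty(0,T)$. The last estimate yields $\|2v^{\delta_n}-1-\varphi^{\delta_n}\|_{L^\infty(0,T;\L{2})}=O(\sqrt{\delta_n})$, while Poincaré controls the mean-free parts $\mu_\G^{\delta_n},\theta_\G^{\delta_n}$ uniformly in $L^2(0,T;\H{1})$; equations \eqref{eq:CH1_red}--\eqref{eq:v_red} give the corresponding bounds on $\pd_t\varphi^{\delta_n}$ and $\pd_t v^{\delta_n}$ in $L^2(0,T;\H{-1})$.

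Along a subsequence, Banach-Alaoglu delivers the weak convergences stated in the theorem, and Aubin-Lions (compact embedding $\H{1}\hookrightarrow\L{2}$ together with $\pd_t\varphi^{\delta_n}$ bounded in $L^2(\H{-1})$) yields strong convergence $\varphi^{\delta_n}\to\varphi$ in $L^2(0,T;\L{2})$; combined with the $O(\sqrt{\delta_n})$ estimate this forces $v^{\delta_n}\to(1+\varphi)/2$ strongly in $L^\infty(0,T;\L{2})$, so $v=(1+\varphi)/2$. Since $\G$ is two-dimensional, $\varphi^{\delta_n}$ is bounded in $L^\infty(0,T;L^p(\G))$ for every $p<\infty$ by Sobolev embedding, and the cubic $W'$ then converges, via Vitali's theorem, as $W'(\varphi^{\delta_n})\to W'(\varphi)$ in $L^2(0,T;\L{2})$, legalising the limit passage in the nonlinear term. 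To avoid the singular $\delta_n^{-1}$ coefficient I would rewrite \eqref{eq:CH2_red} in the equivalent form
\[ \mu^{\delta_n}+\tfrac{1}{2}\theta^{\delta_n} = -\eps\SL\varphi^{\delta_n}+\eps^{-1}W'(\varphi^{\delta_n}), \]
which is $\delta_n$-uniformly regular since the problematic term has been absorbed into $\theta/2$; testing against mean-value-free $\eta$ and passing to the limit then gives $\mu_\G + \tfrac{1}{2}\theta_\G = -\eps\SL\varphi_\G + \eps^{-1}P_\G W'(\varphi)$.

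To extract the $\sigma$-equation, differentiate the pointwise constraint $\tfrac{\delta_n}{2}\theta^{\delta_n}=2v^{\delta_n}-1-\varphi^{\delta_n}$ in time and insert \eqref{eq:CH1_red}--\eqref{eq:v_red} to get
\[ \tfrac{\delta_n}{4}\pd_t\theta^{\delta_n} = \SL\theta^{\delta_n}-\tfrac{1}{2}\SL\mu^{\delta_n}+q(u^{\delta_n},v^{\delta_n}). \]
The right-hand side is $\delta_n$-uniformly bounded in $L^2(0,T;\H{-1})$; moreover $\delta_n\theta^{\delta_n}=2(2v^{\delta_n}-1-\varphi^{\delta_n})\to 0$ strongly in $L^\infty(0,T;\L{2})$, so its distributional time derivative vanishes in the limit, giving the asserted $\delta_n\pd_t\theta_\G^{\delta_n}\stackrel{*}{\rightharpoonup}0$ in $L^2(0,T;\H{-1})$. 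Taking the mean-free projection of the displayed identity and passing to the limit yields $-\SL(\tfrac{1}{2}\mu_\G-\theta_\G)=P_\G q(u,v)$; direct computation using $v=(1+\varphi)/2$ and $q=c_1u-(c_1u+c_2)v$ gives $P_\G q=-\tfrac{1}{2}(c_1u+c_2)\varphi_\G$, hence $\SL\sigma=\tfrac{1}{2}(c_1u+c_2)\varphi_\G$ for $\sigma=\theta_\G-\tfrac{1}{2}\mu_\G$ with $\int_\G\sigma=0$. Finally, substituting $\theta_\G=\sigma+\tfrac{1}{2}\mu_\G$ into the relation $\mu_\G+\tfrac{1}{2}\theta_\G=-\eps\SL\varphi_\G+\eps^{-1}P_\G W'(\varphi)$ obtained above produces the stated $\tfrac{5}{4}$-prefactor. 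The main obstacle in the whole program is securing the $\delta_n$-uniform bounds on $\pd_t\varphi^{\delta_n}$ which feed Aubin-Lions and thus the strong convergence needed for the $W'$-nonlinearity; this rests entirely on the cancellation that removes $\delta_n^{-1}$ from the chemical-potential equation, and on the fact that only the mean-free parts of $\mu^{\delta_n}$ and $\theta^{\delta_n}$ are uniformly controllable, the means potentially blowing up like $\delta_n^{-1}$ as $\delta_n\to 0$, which is precisely why the limit Ohta-Kawasaki system is naturally an equation for mean-free quantities.
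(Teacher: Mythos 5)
Your overall architecture coincides with the paper's: eliminate $v$ through $v=\frac{\delta}{4}\theta+\frac{1+\varphi}{2}$, note that \eqref{eq:flux_red} closes into the $\delta$-independent ODE \eqref{eq:u_reduced} so that $u^{\delta_n}$ is a single bounded function $u$, prove $\delta$-uniform bounds for the mean-free quantities, pass to the limit with Aubin--Lions and Nemytskii-type continuity for $W'$, obtain $\delta_n\partial_t\theta_\G^{\delta_n}\rightharpoonup 0$ from the uniform $L^2$ bound on $\theta_\G^{\delta_n}$, and recover the $\frac54$-prefactor by substituting $\theta_\G=\sigma+\frac12\mu_\G$. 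However, one of your central estimates is wrong: the claim that $\frac{1}{2\delta_n}\int_\G(2v^{\delta_n}-1-\varphi^{\delta_n})^2$ is bounded in $L^\infty(0,T)$ uniformly in $\delta_n$, hence that $\norm{2v^{\delta_n}-1-\varphi^{\delta_n}}_{L^\infty(0,T;L^2(\G))}=O(\sqrt{\delta_n})$ and $v^{\delta_n}\to\frac{1+\varphi}{2}$ strongly. By the conservation laws \eqref{eq:mass_cons}, $\int_\G(2v^{\delta_n}-1-\varphi^{\delta_n}) = 2\bigl(M-\int_B u^{\delta_n}\bigr)-\abs{\G}-m$ is an order-one quantity that is generically nonzero, so $\norm{2v^{\delta_n}-1-\varphi^{\delta_n}}_{L^2(\G)}$ is bounded away from zero and the affinity term is of order $\delta_n^{-1}$, not $O(1)$ (already at $t=0$ for $\delta$-independent initial data). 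Equivalently, $\fint_\G\theta^{\delta_n}$ blows up like $\delta_n^{-1}$ --- which you yourself observe in your closing remark, in contradiction with the earlier claim. For the same reason your proposed absorption of $\int_\G q\,\theta^{\delta_n}$ in the \emph{full} energy identity cannot be uniform in $\delta_n$: it contains the term $c_1u\int_\G\theta^{\delta_n}\sim\delta_n^{-1}$. The paper never estimates the full $\theta$: Lemma \ref{l:energy_est_q_delta} tests the projected equation \eqref{eq:theta_red_alt} with $\theta_\G$ and the $\mu_\G$-equation with $\partial_t\varphi_\G$, and then uses the coercivity estimate \eqref{eq:pot_control_energy} together with Gronwall, so that only $\frac{\delta}{8}\int_\G\theta_\G^2$, $\norm{\theta_\G}_{L^2(0,T;H^1(\G))}$ and $\norm{\mu_\G}_{L^2(0,T;H^1(\G))}$ are controlled.

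The error propagates into your identification $v=\frac{1+\varphi}{2}$ (the actual limit of $v^{\delta_n}$ differs from this by the generically nonzero constant $\frac{1}{\abs{\G}}\bigl(M-\int_B u-\int_\G\frac{1+\varphi}{2}\bigr)$), but it is repairable because only the mean-free part of $v$ enters the $\sigma$-equation: $P_\G q(u,v^{\delta_n})=-(c_1u+c_2)\,v_\G^{\delta_n}$ with $v_\G^{\delta_n}=\frac{\delta_n}{4}\theta_\G^{\delta_n}+\frac12\varphi_\G^{\delta_n}$, and the uniform $L^2(0,T;L^2(\G))$ bound on $\theta_\G^{\delta_n}$ forces $\frac{\delta_n}{4}\theta_\G^{\delta_n}\to0$, which yields the limit $-\frac{c_1u(t)+c_2}{2}\varphi_\G$ exactly as in the paper, where $v$ is eliminated before the limit passage. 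With this replacement, and with your energy argument recast for the mean-free system only (so that the means of $\theta$ and $\mu$, which may blow up like $\delta_n^{-1}$, never appear), the remaining steps of your plan agree with the paper's proof.
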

      We note that the modification of the Ohta-Kawasaki equation consists in the time dependent coefficient $\frac{c_1u(t)+c_2}{2}$ and the coupling to equation for $u(t)$.
We prove Theorem \ref{prop:conv_OK} in Section \ref{sec:OK}.

Proposition \ref{prop:conv_OK} establishes the connection between the reduced lipid raft model and the Ohta-Kawasaki model on finite time intervals. We conclude the mathematical analysis with results on existence of stationary states and on longtime existence of solutions to  the reduced lipid raft model.

Both results rely on the following growth condition on the exchange term $q.$
\begin{condition}\label{cond:sublin_growth_q}
	Assume that $q: \R \times \R \rightarrow \R$ has sublinear growth, i.e. assume that there exists $\alpha > 1$ such that 
	\begin{equation}\label{eq:sublin_growth}
	\abs{q(u,v)} \leq C\left(1+\abs{u}^{1/\alpha}+\abs{v}^{1/\alpha}\right).
	\end{equation}
\end{condition}
\begin{remark}
	 A similar argument as in Remark \ref{rem:boundedness_u_red_model} shows that we can consider the non-equilibrium case where $q$ is given by \eqref{eq:def_q} even though it does not directly fulfil Condition \ref{cond:sublin_growth_q}. However, one can always modify $q$ with suitable cut-off functions.
\end{remark}
We recall from Remark \ref{rem:mass_cons} that \[ \int_{\G} \varphi \ d\mathcal{H}^2 = m, \quad\quad
\int_B u \ dx + \int_{\G} v \ d\mathcal{H}^2 = M. \] are conserved over time. In order to prove the existence of stationary points to the reduced lipid raft model, we require the following additional condition.  
\begin{condition}\label{cond:mean_value_fix}
	We assume that there exists a continuous operator $S:H^1_{(0)}(\G) \rightarrow \R^2$ such that for all $\tilde{v}\in H^1_{(0)}(\G)=\{u\in H^1(\G): \int_\Gamma u =0\}$ and for any given $M\in\R$ the pair $(\overline{u},\overline{v}):=S(\tilde{v})\in \R^2$ solves
	\begin{align}
	\int_{\Gamma} q(\overline{u},\tilde{v}+\overline{v}) &= 0, \label{eq:cond_mv_1}\\
	\int_B \overline{u} + \int_{\Gamma} \overline{v} &= M.\label{eq:cond_mv_2}
	\end{align} 
	We also write $S_B(\tilde{v})=\overline{u}$ and $S_\G(\tilde{v})=\overline{v}.$
\end{condition}
\begin{remark}
	Condition \ref{cond:mean_value_fix} ensures that in the stationary case the mean values $\frac{1}{\abs{B}}\int_B u$ and $\frac{1}{\abs{\G}}\int_{\G}v$ are determined by the two equations \eqref{eq:cond_mv_1} and \eqref{eq:cond_mv_2}. It should be seen as a condition on $q,$ as the question whether the condition holds actually depends on the form of $q.$ Remark \ref{rem:boundedness_u_red_model} shows that this condition is satisfied for the prime example \eqref{eq:def_q} in the non-equilibrium case \[ q(u,v)=c_1u(1-v)-c_2v, \] since in this case $\int_{\G} q(u,v)$ does not depend on $v$ as can be seen from \eqref{eq:ode_u}.
\end{remark}
\begin{theorem}[Existence of Stationary Solutions]\label{thm:ex_stat_sol}~\\
	Let $m,M \in \R$ be given. Assume that $q: \R \times \R \rightarrow \R$ fulfils Conditions \ref{cond:sublin_growth_q} and \ref{cond:mean_value_fix}.
	Then there exist $u \in \R$ with $0\leq u|B|\leq M$ and functions 
	\[ (\varphi, v, \mu, \theta) \in H^1(\Gamma)\times H^1(\Gamma) \times H^1(\Gamma) \times H^1(\Gamma) \]
	which are weak stationary solutions to the reduced model \eqref{eq:flux_red}--\eqref{eq:theta_red}.
\end{theorem}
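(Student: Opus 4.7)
The plan is to recast the stationary system as a fixed-point problem for a compact map on $H^1_{(0)}(\Gamma)\times H^1_{(0)}(\Gamma)$, parametrized by the mean-value-free parts of $\varphi$ and $\theta$, and to apply the Leray--Schauder theorem, using Condition \ref{cond:mean_value_fix} to absorb the scalar mass and compatibility constraints and Condition \ref{cond:sublin_growth_q} to close the a priori estimate.

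Given $(\tilde\varphi, \tilde\theta) \in H^1_{(0)}(\Gamma)\times H^1_{(0)}(\Gamma)$, I first build the associated data $(\varphi, v, u, \theta)$ by setting $\varphi := \tilde\varphi + m/|\Gamma|$ (so $\int_\Gamma \varphi = m$), $\tilde v := \tfrac12(\tilde\varphi + \tfrac{\delta}{2}\tilde\theta)$, $(u, \bar v) := S(\tilde v)$ via Condition \ref{cond:mean_value_fix}, $v := \tilde v + \bar v$, and $\theta := \tilde\theta + \bar\theta$ with the scalar $\bar\theta := \tfrac{4}{\delta|\Gamma|}(M - |B|u - \tfrac{|\Gamma|+m}{2})$. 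A direct computation shows that with these choices the identity $\theta = \tfrac{2}{\delta}(2v-1-\varphi)$ holds exactly, while $|B|u + \int_\Gamma v = M$ and $\int_\Gamma q(u,v) = 0$ hold by construction.

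Next I define $T(\tilde\varphi, \tilde\theta) := (P_\Gamma \varphi^*, P_\Gamma \theta^*)$, where $\varphi^* \in H^1(\Gamma)$ is the minimizer of $\psi \mapsto \int_\Gamma \tfrac{\eps}{2}|\nabla \psi|^2 + \eps^{-1}W(\psi) - \tfrac{\theta}{2}\psi$ on $\{\psi \in H^1(\Gamma) : \int_\Gamma \psi = m\}$, existing by direct methods with associated Lagrange multiplier $\mu \in \R$, and $\theta^* \in H^1(\Gamma)$ is the solution of $-\SL \theta^* = q(u,v)$ with the prescribed mean $\int_\Gamma \theta^* = \bar\theta |\Gamma|$. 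Any fixed point of $T$ yields, via the data construction above, a weak stationary solution $(u, \varphi, v, \mu, \theta)$ of \eqref{eq:flux_red}--\eqref{eq:theta_red}. Compactness and continuity of $T$ follow from elliptic regularity ($\varphi^*, \theta^* \in H^2(\Gamma)$), the compact embedding $H^2(\Gamma) \hookrightarrow H^1(\Gamma)$, and the continuity of $S$ and $q$.

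For the uniform a priori estimate on fixed points of $\lambda T$, $\lambda \in [0,1]$, testing the Cahn--Hilliard equation with $\varphi^* - m/|\Gamma|$ and absorbing the double-well term $W'(\varphi) = 4\varphi^3 - 4\varphi$ via Young's inequality yields $\|\varphi^*\|_{H^1(\Gamma)} \le C(1 + \|\theta\|_{L^2(\Gamma)})$; testing the elliptic equation for $\theta^*$ with $\theta^* - \bar\theta$ and invoking $\|q(u,v)\|_{L^2} \le C(1 + |u|^{1/\alpha} + \|v\|_{L^2}^{1/\alpha})$ from Condition \ref{cond:sublin_growth_q} (using $L^2 \hookrightarrow L^{2/\alpha}$ for $\alpha > 1$) yields $\|\theta^*\|_{H^1(\Gamma)} \le C(1 + \|\tilde\varphi\|_{H^1}^{1/\alpha} + \|\tilde\theta\|_{H^1}^{1/\alpha} + |u|^{1/\alpha})$; combining these at a fixed point gives an inequality of the form $\|\tilde\theta\|_{H^1} \le C(1 + \|\tilde\theta\|_{H^1}^{1/\alpha})$, which closes precisely because $\alpha > 1$, after which the bound on $\tilde\varphi$ follows. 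The bound $0 \le u|B| \le M$ is a structural consequence of the algebraic equations defining $S$ together with $|B|u + \int_\Gamma v = M$, as in Remark \ref{rem:boundedness_u_red_model} for the prime example $q(u,v) = c_1 u(1-v) - c_2 v$. The main obstacle is exactly this a priori bound: the linear growth of the Cahn--Hilliard minimizer's $H^1$-norm in $\|\theta\|_{L^2}$ means the $\tilde\varphi$-component of $T$ is not itself sublinearly bounded, and the crucial algebraic point is that the sublinear bound on $\tilde\theta$ feeds back through the Cahn--Hilliard estimate to provide sublinear control on $\tilde\varphi$, with the continuity of $S$ providing the needed uniform control on $|u|$ during the bootstrap.
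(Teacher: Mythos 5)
Your overall strategy is the same as the paper's: decompose into mean values and mean-value-free parts, use Condition \ref{cond:mean_value_fix} to pin down the scalars $u$ and $\overline v$ (and hence $\overline\theta$), set up a compact fixed-point map on mean-value-free $H^1$ spaces, and close the Leray--Schauder a priori estimate through the sublinear growth of $q$ from Condition \ref{cond:sublin_growth_q}; your reduction to two unknowns $(\tilde\varphi,\tilde\theta)$ instead of the paper's three is only cosmetic. However, there is a genuine gap in how you build the $\varphi$-component of the map: you define $\varphi^*$ as \emph{the} minimizer of $\psi \mapsto \int_\Gamma \tfrac{\eps}{2}|\nabla\psi|^2 + \eps^{-1}W(\psi) - \tfrac{\theta}{2}\psi$ under the mass constraint. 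Since $W$ is a double well, this functional is not convex (for small $\eps$ the concave part $-4s^2/\eps$ is not dominated), so minimizers need not be unique; $T$ is therefore not obviously single-valued, and even after an arbitrary selection you have no argument that the selection depends continuously on $(\tilde\varphi,\tilde\theta)$ (through $\theta$), which is exactly what Leray--Schauder requires. The paper avoids this by splitting $W$ into its convex part $V(s)=s^4$ and the concave remainder, solving $-\eps\SL\varphi_\G + \tfrac4\eps P_\G(\varphi_\G^3) = \tau\tilde\varphi + \tfrac{\theta_\G}{2}$ by Br\'ezis' maximal monotone operator theory (Lemma \ref{l:thm_stat_sol_l1}), so the solution operator is single-valued, continuous and compact, and the lagged concave term together with the factor $\tau$ in front of $q$ provides the homotopy; at $\tau=1$ a fixed point satisfies the full stationary equation. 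Your proposal either needs this convex--concave splitting or a separate argument that your selected minimizer yields a continuous map; as written, the fixed-point framework does not apply.

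A secondary weak point is your treatment of $u=S_B(\tilde v)$ in the a priori estimate: continuity of $S$ gives bounds only on bounded sets of $\tilde v$, so it cannot by itself supply the uniform control of $|u|^{1/\alpha}$ (and of $\overline\theta$) over all potential fixed points that your bootstrap needs, and the bound $0\le u|B|\le M$ is not a consequence of Condition \ref{cond:mean_value_fix} for general $q$ (it is verified in Remark \ref{rem:boundedness_u_red_model} only for the prime example). To be fair, the paper's own Lemma \ref{l:thm_stat_sol_l2} also treats $u$ as a given constant, so this is not worse than the published argument, but if you keep your formulation you should either state a uniform bound on $S_B$ as an additional hypothesis or restrict to exchange terms, such as $q(u,v)=c_1u(1-v)-c_2v$, for which $\int_\G q$ is independent of $v$ and $u$ is determined by $M$ alone.
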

The proof is given in Section \ref{sec:proof_stat}.

For the following we assume that $(u,\varphi,v,\theta,\mu)$ is defined for all $t\geq 0$ such that $(u,\varphi,v,\theta,\mu)|_{[0,T]}$ is a weak solution of the reduced model for all  $T\in (0,\infty)$ as in Proposition~\ref{prop:conv_existence_reduced_model}. Existence of such weak solutions for all $t\geq 0$ can be easily proven by replacing the time interval $(0,T)$ for $(u^{D_n},\varphi^{D_n},v^{D_n},\theta^{D_n},\mu^{D_n})$ by $(0,n)$ and using the same arguments as in the proof of Proposition~\ref{prop:conv_existence_reduced_model} together with a suitable diagonal sequence argument.
Boundedness of solutions to the reduced model can be proved provided that the cellular cholesterol concentration $u$ remains uniformly bounded for all times. 
\begin{condition}\label{as:growth_q_longtime}
	We assume that $u \in L^\infty(0,\infty).$ 
\end{condition}
\begin{remark}
For all choices for the exchange term $q$, $u$ is given as the solution to the ordinary differential equation
			\[\frac{d}{dt} \int_B u(t) \ dx = -\int_{\G} q(u,v)\]
			Therefore, Condition \ref{as:growth_q_longtime} is fulfiled if the solution to this equation exists for all times and stays bounded as $t\rightarrow\infty.$
			As we have already discussed in Remark \ref{rem:boundedness_u_red_model}(1) and (2), this is in particular the case for the prime example \eqref{eq:def_q} in the non-equilibrium case with suitable initial values.
\end{remark}
\begin{proposition}\label{prop:longtime}
	Assume that Conditions \ref{cond:sublin_growth_q}  and \ref{as:growth_q_longtime} hold. Then there exists $C>0$ which depends on the initial data but is independent of $t$ such that for almost all $ t\in [0,\infty)$ \[ \F(v(t),\varphi(t)) \leq C. \]
\end{proposition}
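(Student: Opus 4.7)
The plan is to combine the dissipative structure of the reduced model with a coercivity estimate of the surface free energy by the dissipation, so as to reduce matters to a linear dissipative ODE for the total energy.

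Working with the augmented quantity $E(t) := \F(v(t),\varphi(t)) + \tfrac{|B|}{2} u(t)^2$ (recall that $u$ is spatially constant in the reduced model), testing \eqref{eq:CH1_red}--\eqref{eq:v_red} with $\mu$ and $\theta$ and invoking the ODE \eqref{eq:flux_red} for $u$, I would first derive the identity
\begin{equation*}
\frac{dE}{dt} + \|\SG \mu\|_{L^2(\G)}^2 + \|\SG \theta\|_{L^2(\G)}^2 = \int_\G q(u,v)(\theta - u)\, d\mathcal{H}^2 =: R(t).
\end{equation*}
Condition~\ref{as:growth_q_longtime} provides $u \in L^\infty(0,\infty)$, which together with mass conservation (Remark~\ref{rem:mass_cons}) yields $\int_\G v = M - |B|u$ and hence $\bar\theta := \tfrac{1}{|\G|}\int_\G \theta$ uniformly bounded in $t$; Poincar\'e then gives $\|\theta\|_{L^2} \leq C(\|\SG \theta\|_{L^2} + 1)$. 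Under Condition~\ref{cond:sublin_growth_q} and boundedness of $u$, $|q(u,v)| \leq C(1 + |v|^{1/\alpha})$ with $\alpha > 1$. Writing $v = \tfrac{1}{2}(2v-1-\varphi) + \tfrac{1}{2}(1+\varphi)$ and using the coercivity $W(\varphi) \geq c\varphi^4 - C$ of the double-well potential, one obtains $\|v\|_{L^2}^2 \leq C(1 + \F)$; H\"older's inequality (applicable since $2/\alpha < 2$) then produces $\|q\|_{L^2}^2 \leq C(1+\F)^{1/\alpha}$, and Young's inequality yields $R(t) \leq \tfrac{1}{2}\|\SG\theta\|_{L^2}^2 + C(1+\F(t))^{1/\alpha}$.

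The crux is the coercivity $\F \leq C(\|\SG\mu\|_{L^2}^2 + \|\SG\theta\|_{L^2}^2 + 1)$. The affinity contribution $\F_\theta = \tfrac{\delta}{8}\|\theta\|_{L^2}^2$ is dominated directly by Poincar\'e applied to $\theta - \bar\theta$. For the Ginzburg--Landau part $\F_{GL}$, I would test \eqref{eq:CH2_red} with $\varphi - \bar\varphi$, where $\bar\varphi := m/|\G|$; assuming the physically natural hypothesis $|\bar\varphi| < 1$, the classical double-well inequality $W'(s)(s-\bar\varphi) \geq c_1 W(s) - c_2$, integration by parts, Poincar\'e on $\mu - \bar\mu$, the bound $\|\varphi - \bar\varphi\|_{L^2} \leq C \F_{GL}^{1/2}$, and Young's inequality produce $\F_{GL} \leq C(\|\SG\mu\|_{L^2}^2 + \F_\theta + 1)$, which together with the $\F_\theta$ bound yields the desired coercivity.

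Combining these ingredients and using the coercivity in the form $\|\SG\mu\|_{L^2}^2 + \tfrac{1}{2}\|\SG\theta\|_{L^2}^2 \geq \tfrac{c}{2}\F - C$, one deduces $\tfrac{dE}{dt} + \tfrac{c}{2}\F \leq C(1+\F)^{1/\alpha} + C$. Since $1/\alpha < 1$, a further application of Young's inequality absorbs the subcritical term into $\tfrac{c}{4}\F$ plus a constant; using that $|E - \F| \leq C$ (from $u \in L^\infty$), one arrives at $\tfrac{dE}{dt} + cE \leq C$, and Gronwall's lemma yields the uniform-in-time bound on $\F$. The main obstacle is the coercivity estimate: it both requires the double-well inequality (whose hypothesis $|m|/|\G|<1$ must be ensured from the initial data) and a delicate balance of constants so that the cross-terms between $\F_{GL}$ and $\F_\theta$ can be absorbed either into the dissipation or into a smaller multiple of $\F$.
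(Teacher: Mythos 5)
Your argument is essentially the paper's: the energy identity with only the exchange term as a source, the sublinear growth of $q$ together with the boundedness of $u$ to control $\int_\Gamma q\,(\theta-u)$ by a sublinear power of $\F$ plus a small piece of the dissipation, the coercivity of $\F$ by $\|\SG\mu\|_{L^2}^2+\|\SG\theta\|_{L^2}^2$ obtained by testing the $\mu$-equation with $\varphi_\G$ and using $W'(s)(s-\bar\varphi)\geq c_0W(s)-c_1$, and finally a linear dissipative differential inequality plus Gronwall. The only point to correct is your extra hypothesis $|\bar\varphi|=|m|/|\G|<1$: it is superfluous, since for the quartic potential $W(s)=(1-s^2)^2$ the inequality $W'(s)(s-\bar\varphi)\geq c_0W(s)-c_1$ holds for every fixed $\bar\varphi\in\R$ (the term $4s^4$ dominates), which is exactly what the paper uses without any restriction on the lipid mass $m$.
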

This will be proved in Section \ref{sec:long_ex}.

\section{Solutions to the full and reduced model}\label{sec:proofs_ex_reg}
\subsection{Existence of Solutions to the Full Model \eqref{eq:diffU}--\eqref{eq:theta}}\label{sec:proof_ex}
We now prove Theorem \ref{thm:existence} using a typical Galerkin method. Let $\lbrace\omega_i\rbrace_{i \in \mathbb{N}}$ be the family of eigenfunctions of the Laplace-Beltrami-Operator $\SL$ on the surface $\Gamma.$ 
	Analogously, we define $\lbrace\kappa_i\rbrace_{i \in \mathbb{N}}$ to be the family of eigenfunctions to the Laplace-Operator on $B$ with (homogeneous) Neumann boundary conditions. 
	
	We now restrict ourselves to functions of the form
	\begin{alignat*}{2}
	u^N(t,x)&=\sum_{i=1}^N c^i_{u,N}(t)\kappa_i(x),&\qquad
	\varphi^N(t,x)&=\sum_{i=1}^N d^i_{\varphi,N}(t)\omega_i(x),\\
	\mu^N(t,x)&=\sum_{i=1}^N d^i_{\mu,N}(t)\omega_i(x),&
	v^N(t,x)&=\sum_{i=1}^N d^i_{v,N}(t)\omega_i(x),
	\end{alignat*}
	which are elements of the finite dimensional function spaces $V_\Gamma^N:=\spa\left(\lbrace\omega_i\rbrace_{i=1}^N\right)$ and $V_B^N:=\spa\left(\lbrace\kappa_i\rbrace_{i=1}^N\right)$, respectively.
	In accordance with \eqref{eq:theta} we set
	\begin{equation*}
	\theta^N(t,x)=\frac{2}{\delta}\left( 2 d^1_{v,N}(t)-\sqrt{|\Gamma|}-d^1_{\varphi,N}(t) \right)\omega_1 + \frac{2}{\delta}\sum_{i=2}^N \left( 2 d^i_{v,N}(t)-d^i_{\varphi,N}(t)\right)\omega_i.
      \end{equation*}
      Note that $\omega_1= \frac1{\sqrt{|\Gamma|}}$.
	The weak formulation of \eqref{eq:diffU}--\eqref{eq:theta} for test functions $\omega \in V_\Gamma^N$ and $\kappa \in V_B^N$ then reads
	\begin{alignat}{2}
	\label{eq:weak_diffU_approx}
	\int_B \partial_t u^N \kappa &= -D \int_B \nabla u^N \cdot \nabla \kappa - \int_\Gamma q(u^N,v^N)\kappa,\\
	\label{eq:weak_CH1_approx}
	\int_\Gamma \pd_t \varphi^N \omega &= -\int_\Gamma \SG \mu^N \cdot \SG \omega,\\
	\label{eq:weak_CH2_approx}
	\int_\Gamma \mu^N \omega &=  \int_\Gamma \left[ \eps \SG \varphi^N \cdot \SG \omega + \eps^{-1}W'(\varphi^N)\omega - \delta^{-1}(2v^N - 1 - \varphi^N)\omega \right], \\
	\label{eq:weak_v_approx}
	\int_\Gamma \pd_t v^N \omega &= -\int_\Gamma \SG \theta^N \cdot \SG \omega + \int_\Gamma q(u^N,v^N)\omega.
	\end{alignat}
	Choosing $\kappa=\kappa_i$ and $\omega=\omega_i$ in \eqref{eq:weak_diffU_approx}--\eqref{eq:weak_v_approx} above yields a system of ordinary differential equations for the coefficients $c^i_{u,N}, d^i_{\varphi,N}, d^i_{\mu,N}$ and $d^i_{v,N}$, $i=1,\ldots ,n$. 
	
	The system is complemented by initial conditions derived from the initial data $u_0, \varphi_0, v_0.$ To this end, we set the initial conditions for the above system to be $c^i_{u,N}(0) = \int_{B} u_0 \kappa_i,$ $d^i_{\varphi,N}(0) = \int_{\G} \varphi_0 \omega_i$ and so forth. Solutions of this system exist due to the Picard-Lindelöf Theorem on some interval $(0,T_n), T_n > 0$. We simplify the notation and denote these solutions by $c^i_{u,N}, d^i_{\varphi,N}, d^i_{\mu,N}$ and $d^i_{v,N}$. Accordingly, we write \begin{align*}
	u^N(t,x)&=\sum_{i=1}^N c^i_{u,N}(t)\kappa_i(x),\quad
	\varphi^N(t,x)=\sum_{i=1}^N d^i_{\varphi,N}(t)\omega_i(x),\\
	%\mu^N(t,x)&=\sum_{i=1}^N d^i_{\mu,N}(t)\omega_i(x),\\
	%v^N(t,x)&=\sum_{i=1}^N d^i_{v,N}(t)\omega_i(x),\\
	\theta^N(t,x)&=\frac{2}{\delta\sqrt{\abs{\G}}}\left( 2 d^1_{v,N}(t)-\sqrt{|\Gamma|}-d^1_{\varphi,N}(t) \right) + \frac{2}{\delta}\sum_{i=2}^N \left( 2 d^i_{v,N}(t)-d^i_{\varphi,N}(t)\right)\omega_i(x)
	\end{align*}
	for all $t\in (0,T_n)$ and so on. 
	
	We shall now derive estimates that prove that the solutions $c^i_{u,N}, d^i_{\varphi,N}, d^i_{\mu,N}$ and $d^i_{v,N}$ can be extended to an interval $(0,T)$ for every $N \in \N$ and subsequences of $\lbrace u^N \rbrace, \lbrace \varphi^N\rbrace,$ $\lbrace \mu^N\rbrace,$ and $\lbrace v^N\rbrace$ converge to suitable limit functions $u,\mu,\varphi$ and $v$. It remains then to show that the limit functions $u,\mu,\varphi$ and $v$ solve the equations \eqref{eq:diffU}--\eqref{eq:v}.
	
	We begin by noting that $\kappa=u^N$ is an admissible test function in \eqref{eq:weak_diffU_approx}. %, since clearly $u^N \in \spa\left(\lbrace\kappa_i\rbrace_{i=1}^N\right)$ by the definition of $u^N.$
	Choosing $\kappa=u^N$ in \eqref{eq:weak_diffU_approx} yields
	\begin{align*}
	\frac{1}{2}\frac{d}{dt} \left[ \int_B \abs{u^N}^2 \right] = \int_B u^N \partial_t u^N = -D \int_B \abs{\nabla u^N}^2 - \int_{\G} q(u^N,v^N)u^N
	\end{align*}
	where we have used that the time dependent coefficients $c^i_{u,N}(t)$ are solutions to the ODE system above and therefore differentiable in $t$.
	
	Analogously, one has that $\mu^N, \theta^N$ and $-\partial_t \varphi^N$ are elements of $V_\Gamma^N$ and therefore are admissible test functions in \eqref{eq:weak_CH1_approx}--\eqref{eq:weak_v_approx}. 
	Choosing $\omega = -\partial_t \varphi^N$ in \eqref{eq:weak_CH2_approx}, we obtain 
	\begin{align*}
	\int_{\G} -\partial_t \varphi^N \mu^N &= \int_{\G} \left[ -\eps \SG \varphi^N \cdot \SG \left(\partial_t \varphi^N\right) - \eps^{-1}W'(\varphi^N)\partial_t \varphi^N  \right] + \frac{1}{2}\int_{\G} \theta^N\partial_t\varphi^N  \\
	&= -\frac{d}{dt} \left[ \int_{\G}\frac{\eps}{2}\abs{\SG \varphi^N}^2 + \frac{1}{\eps}W(\varphi^N) \right] + \frac{1}{2}\int_{\G} \theta^N\partial_t\varphi^N.
	\end{align*}
	Choosing $\omega=\mu^N$ in \eqref{eq:weak_CH2_approx} leads to
	\begin{align*}
	\int_{\G} \partial_t \varphi^N \mu^N = - \int_{\G} \abs{\SG \mu^N}^2.
	\end{align*}
	Finally, we use that $\partial_t v^N=\frac{\delta}{4}\partial_t \theta+ \frac{1}{2}\partial_t \varphi^N$ to conclude
	\begin{align*}
	\frac{\delta}{2} \frac{d}{dt}\left[\int_{\G} \abs{\theta^N}^2 \right] + \frac{1}{2}\int_{\G} \partial_t \varphi^N \theta^N &= \frac{\delta}{4}\int_{\G} \theta^N \partial_t \theta^N + \frac{1}{2}\int_{\G} \partial_t \varphi^N \theta^N \\&= - \int_{\G} \abs{\SG \theta^N}^2 + \int_{\G} q(u^N,v^N)\theta^N 
	\end{align*}
	from \eqref{eq:weak_v_approx} with $\omega=\theta^N.$ 
	
	We add these four equations to obtain
	\begin{align}\label{eq:energy_step1}
	\frac{d}{dt} \left[ \frac{1}{2} \int_B \abs{u^N}^2 \right. &+ \left. \frac{\eps}{2} \int_\Gamma \abs{\SG \varphi^N}^2 + \frac{1}{\eps}\int_\Gamma W(\varphi^N) + \frac{\delta}{8}\int_\Gamma \abs{\theta^N}^2 \right] \nonumber \\ 
	&+D\int_B \abs{\nabla u^N}^2 + \int_\Gamma \abs{\SG \mu^N}^2 + \int_\Gamma \abs{\SG \theta^N}^2 = \int_\Gamma q(u^N,v^N)(\theta^N-u^N).
	\end{align}
	In order to estimate the right-hand side, we use H\"older's and Young's inequality to estimate
	\begin{align}\label{eq:est_q}
	\abs{\int_\Gamma q(u^N,v^N)(\theta^N-u^N)} 
	\leq& \frac{1}{2}\int_\Gamma \abs{\theta^N-u^N}^2+\frac{1}{2}\int_\Gamma \abs{q(u^N,v^N)}^2 \nonumber \\
	\leq& \int_\Gamma \abs{\theta^N}^2 + \int_\Gamma \abs{u^N}^2 + C \int_{\G} \left( 1 + \abs{u^N}^2 + \abs{v^N}^2 \right) \nonumber \\
	\leq& \int_\Gamma \abs{\theta^N}^2 + C\int_\Gamma \abs{u^N}^2 + C\left(1+\int_\Gamma \abs{v^N}^2\right).
	\end{align}

	Taking into account that $2v^N=\frac{\delta}{2} \theta^N + 1 + \varphi^N$ we derive
	\[ \abs{v^N}^2 \leq C\left(\delta^2 \abs{\theta^N}^2 +  \abs{1+\varphi^N}^2 \right) \] from Young's inequality.
	Since $\abs{1+\varphi^N}^2 \leq C(\eps)\left( 1 + \frac{1}{\eps} W(\varphi^N) \right)$ we thus obtain
	\begin{align*}
	\int_\Gamma \abs{v^N}^2 &\leq C(\delta,\eps) \left( 1 + \frac{\delta}{8}\int_\Gamma \abs{\theta^N}^2+\frac{1}{2\eps}\int_\Gamma W(\varphi^N)\right).
	\end{align*}
	Therefore
	\begin{align}\nonumber
          &\int_\Gamma q(u^N,v^N)(\theta^N-u^N) \\\label{ineq:control_q}
          &\quad \leq C(\delta,\eps) \left[1+\frac{1}{2} \int_\Gamma \abs{u^N}^2 + \frac{\eps}{2} \int_\Gamma \abs{\SG \varphi^N}^2 + \frac{1}{\eps}\int_\Gamma W(\varphi^N) + \frac{\delta}{8}\int_\Gamma \abs{\theta^N}^2 \right].
	\end{align}
	Combining \eqref{eq:energy_step1} and \eqref{ineq:control_q} we arrive at
	\begin{align*}
	\frac{d}{dt} \left[ \frac{1}{2} \int_B \abs{u^N}^2 \right. &+ \left. \frac{\eps}{2} \int_\Gamma \abs{\SG \varphi^N}^2 + \frac{1}{\eps}\int_\Gamma W(\varphi^N) + \frac{\delta}{8}\int_\Gamma \abs{\theta^N}^2 \right] \nonumber \\ 
	&+D\int_B \abs{\nabla u^N}^2 + \int_\Gamma \abs{\SG \mu^N}^2 + \int_\Gamma \abs{\SG \theta^N}^2 \nonumber\\
	&\leq C(\delta,\eps) \left[1+\frac{1}{2} \int_\Gamma \abs{u^N}^2 + \frac{\eps}{2} \int_\Gamma \abs{\SG \varphi^N}^2 + \frac{1}{\eps}\int_\Gamma W(\varphi^N) + \frac{\delta}{8}\int_\Gamma \abs{\theta^N}^2 \right],
	\end{align*}
	which allows us to employ Gronwall's inequality to deduce bounds on $u^N,\phi^N,\mu^N$ and $v^N$ provided we can control $\int_\Gamma \abs{u^N}^2$. 
	
	By \cite[Chapter 2, (2.27)]{LU} the interpolation inequality
	\[ \norm{u}_{L^2(\G)} \leq C \norm{u}_{H^1(B)}^{1/2}\norm{u}_{L^2(B)}^{1/2} \] holds.
	Using this estimate, we immediately find
	\begin{align*}
	\int_\Gamma \abs{u^N}^2 \leq C(a)\norm{u^N}_{L^2(B)}^2 + a\norm{\nabla u^N}_{L^2(B)}^2
	\end{align*}
	for $a>0$ arbitrary small.
	Choosing $a$ small enough, we thus conclude
	\begin{align*}
	\frac{d}{dt} \left[ \frac{1}{2} \int_B \abs{u^N}^2 \right. &+ \left. \frac{\eps}{2} \int_\Gamma \abs{\SG \varphi^N}^2 + \frac{1}{\eps}\int_\Gamma W(\varphi^N) + \frac{\delta}{8}\int_\Gamma \abs{\theta^N}^2 \right] \nonumber \\ 
	&+\frac{D}{2}\int_B \abs{\nabla u^N}^2 + \int_\Gamma \abs{\SG \mu^N}^2 + \int_\Gamma \abs{\SG \theta^N}^2 \nonumber\\
	&\leq C(\delta) \left[1+\frac{1}{2} \int_B \abs{u^N}^2 + \frac{\eps}{2} \int_\Gamma \abs{\SG \varphi^N}^2 + \frac{1}{\eps}\int_\Gamma W(\varphi^N) + \frac{\delta}{8}\int_\Gamma \abs{\theta^N}^2 \right].
	\end{align*}
	We are now in the position to apply Gronwall's inequality and after integrating the above equation in time from $0$ to $T>0$ we deduce
	\begin{align}\label{eq:energy_est}
	\sup_{0\leq t \leq T} \left\lbrace \frac{1}{2} \int_B \abs{u^N}^2 \right. &+ \left. \frac{\eps}{2} \int_\Gamma \abs{\SG \varphi^N}^2 + \frac{1}{\eps}\int_\Gamma W(\varphi^N) + \frac{\delta}{8} \int_\Gamma \abs{\theta^N}^2 \right\rbrace \nonumber \\
	&+\frac{D}{2}\int_0^T \int_B \abs{\nabla u^N}^2 + \int_0^T \int_\Gamma \abs{\SG \mu^N}^2 + \int_0^T \int_\Gamma \abs{\SG \theta^N}^2 \leq C(T).
	\end{align}
	Moreover, choosing $\omega = \omega_1 \equiv \text{const}$ in \eqref{eq:weak_CH2_approx} yields
	\[ \int_{\G} \mu^N = \frac{1}{\eps}\int_{\G} W'(\varphi^N) - \frac{1}{2}\int_{\G} \theta^N.  \]
	Since $\varphi^N$ is bounded in $L^\infty(0,T;H^1(\G))$ by \eqref{eq:energy_est}, the Sobolev embedding theorem in dimension $\dim \G =2$ implies $\varphi^N \in L^\infty(0,T;L^p(\G))$ for all $1\leq p < \infty.$ As $W'(\varphi)=4\varphi^3-\varphi$ and $\abs{\int_{\G} \theta^N} \leq C(\G) \norm{\theta^N}_{L^2(\G)}$ we thus infer that
	\begin{align}\label{eq:control_mean_value_mu}
	\sup_{0\leq t\leq T} \abs{\int_{\G} \mu^N(t) } \leq C\left(\norm{\varphi^N}_{L^\infty(0,T;H^1(\G))} + \norm{\theta^N}_{L^1\infty(0,T;L^2(\G))}\right) \leq C(T)
	\end{align}
	by \eqref{eq:energy_est}. As a result, we obtain $\norm{\mu^N}_{L^2(0,T;H^1(\G))} \leq C(T)$ from Poincar\'{e}'s inequality, \eqref{eq:energy_est}, and \eqref{eq:control_mean_value_mu}.
	
	For any $\tau \in H^1(B)$, there exists $\tau_1 \in \spa{\lbrace \kappa_i \rbrace_{i \in \mathbb{N}}^N}$ such that $\tau_2 := \tau - \tau_1$ is orthogonal to $\spa{\lbrace\kappa_i\rbrace_{i\in\mathbb{N}}^N}$ in $L^2(B)$ as well as in $H^1(B)$. Therefore $\langle \partial_t u^N, \tau\rangle = \int_B \partial_t u^N \tau_1$ and since $\tau_1$ is an admissible test function in \eqref{eq:weak_diffU_approx}, we find
	\begin{align*}
	\abs{\langle \partial_t u^N, \tau\rangle} = \abs{\int_B \partial_t u^N \tau_1} &\leq D \abs{\int_B \nabla u^N \cdot \nabla \tau_1} + \abs{\int_\Gamma q(u^N,v^N)\tau_1} \nonumber \\
	&\leq D \norm{u^N}_{H^1(B)} \norm{\tau_1}_{H^1(B)} + \norm{q(u^N,v^N)}_{L^2(\Gamma)}\norm{\tau_1}_{L^2(\Gamma)}. 
	\end{align*}
	Observe that the continuity of the trace operator ensures $\norm{\tau_1}_{L^2(\Gamma)}\leq C \norm{\tau_1}_{H^1(B)}$ and that $\norm{\tau_1}_{H^1(B)}\leq \norm{\tau}_{H^1(B)}$ since $\lbrace\kappa_i\rbrace_{i\in\mathbb{N}} \subset {H^1(B)}$ is an orthogonal basis. Thus the above inequality implies (after integrating in time)
	\begin{align*}
	\norm{\partial_t u^N}_{L^2(0,T;\left(H^1(B)\right)')} \leq \left( D \norm{u^N}_{L^2(0,T;H^1(B))} + \norm{q(u^N,v^N)}_{L^2(0,T;L^2(\Gamma))}\right)
	\end{align*}
	The norm $\norm{u^N}_{L^2(0,T;H^1(B))}$ can be controlled directly by energy estimate \eqref{eq:energy_est} while similar arguments as in \eqref{eq:est_q} allow us to deduce that $\norm{q(u^N,v^N)}_{L^2(0,T;L^2(\Gamma))}$ is bounded by the constant $C(T)$ from $\eqref{eq:energy_est}.$ 
	The embedding $H^1(B) \hookrightarrow H^s(B)$ is compact for all $1/2 < s < 1$. The Aubin-Lions theorem \cite[Corollary 2]{JS} applied to $H^1(B) \hookrightarrow H^s(B) \hookrightarrow H^{-1}(B)$	allows us to deduce the relative compactness of $\lbrace u_k \rbrace$ in $L^2([0, T]\times H^s(B)), $ i.e., after possibly extracting a subsequence, the strong convergence $u_k \rightarrow u$ in $L^2(0,T;H^s(B)).$ By the continuity of the trace operator, we deduce $\tr(u_k) \rightarrow \tr(u)$ in $L^2([0, T] \times \Gamma)$.
	
	Analogously to the bound on $\norm{\partial_t u^N}_{L^2(0,T;\left(H^1(B)\right)')}$ , we obtain $\norm{\partial_t \varphi^N}_{L^2(0,T;H^{-1}(\Gamma))} \leq C(T)$ and $\norm{\partial_t v^N}_{L^2(0,T;H^{-1}(\Gamma))} \leq C(T)$.
	
	The Aubin-Lions theorem \cite[Corollary 2]{JS} applied for the Gelfand triple $H^1(\Gamma) \hookrightarrow L^2(\Gamma) \hookrightarrow H^{-1}(\Gamma)$ allows us to deduce the relative compactness of $\lbrace v^N \rbrace$ and $\lbrace \varphi^N \rbrace$ in $L^q(0,T;H^s(\Gamma))$ for every $1\leq q <\infty$, $s\in [0,1)$ and consequently (up to the extraction of a subsequence) the convergence of $v^N$ and $\varphi^N$ pointwise almost everywhere in $\Gamma \times [0,T].$ 

	Summing up our results, we thus deduce that there exist subsequences (which we also denote by $(u^N,\varphi^N,\mu^N,\theta^N,v^N)$ such that
	\begin{align}
	u^N \rightharpoonup u &\text{ in } L^2(0,T;H^1(B)), \\ u^N \rightarrow u &\text{ in } L^2(0,T;H^s(B)), 0 < s < 1,\label{conv:u1}\\
	\tr(u^N) \rightarrow \tr(u) \text{ in } L^2(0,T;L^2(\Gamma)) &\text{ and } \tr(u^N) \rightarrow \tr(u) \text{ a.e. in } \Gamma_T,\\
	\varphi^N \rightharpoonup^\ast \varphi &\text{ in } L^\infty(0,T;H^1(\Gamma)),\\  \varphi^N \rightarrow \varphi &\text{ in } L^q(0,T;H^s(\Gamma)), 1\leq q<\infty, 0\leq s<1, \label{conv:phi}\\
	\mu^n \rightharpoonup \mu &\text{ in } L^2(0,T;H^1(\Gamma)), \\
	\theta^n \rightharpoonup \theta &\text{ in } L^2(0,T;H^1(\Gamma)), \\
	v^N \rightharpoonup v \text{ in } L^2(0,T;H^1(\Gamma)) &\text{ and } v^N \rightarrow v \text{ in } L^2(0,T;L^2(\Gamma)),\\
	v^N \rightarrow v \text{ a.e. in } \Gamma_T, \label{conv:v2}
	\end{align}
	as $N\rightarrow\infty$ while the time derivatives fulfil
	\begin{align}
	\partial_t u^N \rightharpoonup \partial_t u \text{ in } L^2(0,T;\left(H^1(B)\right)'), \label{conv:dt_u}\\
	\partial_t \varphi^N \rightharpoonup \partial_t \varphi \text{ in } L^2(0,T;H^{-1}(\Gamma)), \\
	\partial_t v^N \rightharpoonup \partial_t v \text{ in } L^2(0,T;H^{-1}(\Gamma)).\label{conv:dt_v}
	\end{align}
	Using \eqref{eq:growth_cond_q}, $|W'(s)|\leq C(|s|^3+1)$, and the theory of Nemytskii operators (cf.\ e.g.\ \cite[Theorem 10.58]{ReRo}) we obtain the convergence
	\begin{align}
          q(u^N,v^N) &\rightarrow_{N\to\infty} q(u,v) \text{ in }  L^2(0,T;L^2(\Gamma)),\\\label{conv:W}
          W'(\varphi^N) &\rightarrow_{N\to\infty} W'(\varphi) \text{ in }  L^2(0,T;L^2(\Gamma)).
        \end{align}

	Let $N_0 \in \mathbb{N}$ be arbitrary and consider the weak formulation of equations \eqref{eq:diffU}--\eqref{eq:theta} for test functions $\omega \in C^1_c([0,T];V^{N_0}_\Gamma)$ and $\kappa \in C^1_c([0,T];V^{N_0}_B)$. From the convergence results in \eqref{conv:u1}--\eqref{conv:v2} and \eqref{conv:dt_u}--\eqref{conv:W} we derive that the limit functions $u,v,\varphi,\mu$ and $\theta$ fulfil 
	\begin{alignat*}{2}
	\int_0^T \langle\partial_t u,\kappa\rangle_{\left(H^1(B)\right)',H^1(B)} &= -D \int_0^T \int_B \nabla u \cdot \nabla \kappa - \int_0^T \int_\Gamma q(u,v)\kappa,\\
	\int_0^T \dprodH{\G}{\pd_t \varphi}{\omega} &= -\int_0^T \int_\Gamma \SG \mu \cdot \SG \omega,\\
	\int_0^T \int_\Gamma \mu \omega &=  \int_0^T \int_\Gamma \left[ \eps \SG \varphi \cdot \SG \omega + \eps^{-1}W'(\varphi)\omega - \delta^{-1}(2v - 1 - \varphi)\omega \right], \\
	\int_0^T \dprodH{\G}{\pd_t v}{\omega} &= -\int_0^T \int_\Gamma \SG \theta \cdot \SG \omega + \int_0^T \int_\Gamma q(u,v)\omega,
	\end{alignat*}
	first for all $\omega \in C^1_c([0,T];V^{N_0}_\Gamma)$ and $\kappa \in C^1_c([0,T];V^{N_0}_B)$, but since $N_0 \in \N$ was arbitrary also for all $\omega \in C^1_c([0,T];\bigcup_{N\in\N}V^{N}_\Gamma)$ and $\kappa \in C^1_c([0,T];\bigcup_{N\in\N}V^{N}_B).$ Using that $\bigcup_{N\in\N}V^{N}_\Gamma$ and $\bigcup_{N\in\N}V^{N}_B$ are dense in $H^1(\Gamma)$ and $H^1(B)$ respectively, we deduce that these equations actually hold for all test functions $\omega \in H^1(0,T;H^1(\Gamma))$ and $\kappa \in H^1(0,T;H^1(B))$.
	
	Now observe that for all $\kappa \in C^1([0,T];V^{N_0}_B)$ such that $\kappa(T)=0$
	\begin{align*}
	&\int_B u(x,0)\kappa(x,0) \ dx \\ =& -\int_0^T \dprodH{B}{\partial_t u(\cdot,t)}{\kappa(\cdot,t)} \ dt - \int_0^T \dprodH{B}{u(\cdot,t)}{\pd_t \kappa(\cdot,t)} \ dt \\
	=& D \int_0^T \int_B \nabla u \cdot \nabla \kappa + \int_0^T \int_\Gamma q(u,v)\kappa - \int_0^T \dprodH{B}{u(\cdot,t)}{\pd_t \kappa(\cdot,t)} \ dt \\
	=& \lim_{N\rightarrow\infty}\left( D \int_0^T \int_B \nabla u^N \cdot \nabla \kappa + \int_0^T \int_\Gamma q(u^N,v^N)\kappa - \int_0^T \dprodH{B}{u^N(\cdot,t)}{\pd_t \kappa(\cdot,t)} \ dt \right).
	\end{align*}
	By \eqref{eq:weak_diffU_approx} we deduce that
	\begin{align*}
	&\int_0^T \dprodH{B}{u^N(\cdot,t)}{\pd_t \kappa(\cdot,t)} \ dt \\ =& -\int_0^T \dprodH{B}{\pd_t u^N(\cdot,t)}{\kappa(\cdot,t)} \ dt  - \dprodH{\G}{u^N(\cdot,0)}{\kappa(\cdot,0)} \\
	=& D \int_0^T \int_B \nabla u^N \cdot \nabla \kappa \ dt + \int_0^T\int_\Gamma q(u^N,v^N)\kappa \ dt - \dprodH{\G}{u^N(\cdot,0)}{\kappa(\cdot,0)}.
	\end{align*}
	Hence 
	\begin{align*}
	&\int_B u(x,0)\kappa(x,0) \ dx \\ &= \lim_{N\rightarrow\infty}\left( D \int_0^T \int_B \nabla u^N \cdot \nabla \kappa + \int_0^T \int_\Gamma q(u^N,v^N)\kappa - \int_0^T \dprodH{B}{u^N(\cdot,t)}{\pd_t \kappa(\cdot,t)} \ dt \right) \\ 
	&=  \lim_{N\rightarrow\infty} \dprodH{\G}{u^N(\cdot,0)}{\kappa(\cdot,0)} = \int_B u_0(x)\kappa(x,0) \ dx
	\end{align*}
	for $\kappa \in C^1([0,T];V^{N_0}_B)$ with $\kappa(T)=0$ and $N_0 \in \N$ arbitrary. Thus $u(\cdot,0) = u_0(\cdot)$ in $L^2(B).$ In the same way, we deduce $\varphi(\cdot,0)=\varphi_0(\cdot)$ and $v(\cdot,0)=v_0(\cdot)$ in $L^2(\G).$
	
	Finally, \eqref{eq:energy_est} is uniform in $N$ and therefore implies \eqref{eq:energy_est_lin_growth}.
\subsection{Existence of Solutions to the Reduced Model and the Limit Process $D\rightarrow\infty$}\label{sec:proof_D}

After we proved the necessary estimate \eqref{eq:energy_est_lin_growth} rigorously in Theorem \ref{thm:existence}, we are now in the position to prove Proposition \ref{prop:conv_existence_reduced_model}, thus establishing the connection between the full model \eqref{eq:diffU}--\eqref{eq:theta} and the reduced model \eqref{eq:flux_red}--\eqref{eq:theta_red} rigorously. Note that Proposition \ref{prop:conv_existence_reduced_model} not only assures the convergence of solutions to the full model as $D\rightarrow\infty$ but also gives an existence result for solutions to the reduced model. 

\begin{proof}[Proof of Proposition \ref{prop:conv_existence_reduced_model}]
	According to \eqref{eq:energy_est_lin_growth} the solutions $(u^{D_n}, \varphi^{D_n}, \mu^{D_n}, \theta^{D_n}, v^{D_n})$ fulfil 
	\begin{align}\label{eq:energy_est_reducing_proof}
	\sup_{0\leq t \leq T} \left\lbrace \frac{1}{2} \int_B \abs{u^{D_n}}^2 \right. &+ \left. \frac{\eps}{2} \int_\Gamma \abs{\SG \varphi^{D_n}}^2 + \frac{1}{\eps}\int_\Gamma W(\varphi^{D_n}) + \frac{\delta}{8}\int_\Gamma \abs{\theta^{D_n}}^2 \right\rbrace \nonumber \\
	&+\frac{D_n}{2}\int_0^T\int_B \abs{\nabla u^{D_n}}^2 + \int_0^T\int_\Gamma \abs{\SG \mu^{D_n}}^2 + \int_0^T\int_\Gamma \abs{\SG \theta^{D_n}}^2 \leq C(T).
	\end{align}
	We exploit \eqref{eq:energy_est_reducing_proof} to deduce
	\begin{equation}\label{eq:u_spat_const_limit}
	\int_0^T\int_B \abs{\nabla u^{D_n}}^2 \leq \frac{C(T)}{D_n} \rightarrow 0 \text{ as } n\rightarrow \infty.
	\end{equation} 
	Moreover, choosing a spatially constant $\tau=\tau(t)$ in \eqref{eq:weak_form_diffU} yields
	\[ \abs{\frac{d}{dt}\int_B u^{D_n}} \leq \abs{\int_\Gamma q(u^{D_n},v^{D_n})} \leq \norm{q(u^{D_n},v^{D_n})}_{L^2(\Gamma)}.   \]
	Together with $\sup_{0\leq t \leq T} \abs{\int_B u^{D_n}} \leq C(T)$ from \eqref{eq:energy_est_reducing_proof} we deduce that $\int_B u^{D_n} \ dx $ is bounded in $H^1(0,T).$ Thus Poincar\'{e}'s inequality implies the convergence
	\[u^{D_n} \rightarrow u \text{ in } L^2(0,T;H^1(B))\]
	and by \eqref{eq:u_spat_const_limit} we have $\nabla u \equiv 0.$ Thus the limit function $u$ is constant in the space variables.

Furthermore, \eqref{eq:CH1} and \eqref{eq:v} imply 
	\[ \norm{\partial_t \varphi^{D_n}}_{L^2(0,T;H^{-1}(\G))} \leq C(T) \text{ and } \norm{\partial_t v^{D_n}}_{L^2(0,t;H^{-1}(\G))} \leq C(T)  \]
	by similar arguments as in the proof of Theorem \ref{thm:existence}. Thus the time derivatives fulfil
	\begin{align*}
	\partial_t \varphi^{D_n} \rightharpoonup \partial_t \varphi \text{ in } L^2(0,T;H^{-1}(\Gamma)), \\
	\partial_t v^{D_n} \rightharpoonup \partial_t v \text{ in } L^2(0,T;H^{-1}(\Gamma)).
	\end{align*}
	The estimate \eqref{eq:energy_est_reducing_proof} yields additionally the existence of subsequences (again denoted by $D_n$) such that
	\begin{align*}
	u^{D_n} \rightharpoonup u &\text{ in } L^2(0,T;H^1(B)), \\
	\tr(u^{D_n}) \rightarrow \tr(u) \text{ in } L^2(0,T;L^2(\Gamma)) &\text{ and } \tr(u^{D_n}) \rightarrow \tr(u) \text{ a.e. in } \Gamma_T,\\
	\varphi^{D_n} \rightharpoonup^\ast \varphi &\text{ in } L^\infty(0,T;H^1(\Gamma))\\ \varphi^{D_n} \rightarrow \varphi &\text{ in } L^q(0,T;H^s(\Gamma)), \forall 1\leq q<\infty,0\leq s<1, \\
	\mu^{D_n} \rightharpoonup \mu &\text{ in } L^2(0,T;H^1(\Gamma)), \\
	\theta^{D_n} \rightharpoonup \theta &\text{ in } L^2(0,T;H^1(\Gamma)), \\
	v^{D_n} \rightharpoonup v \text{ in } L^2(0,T;H^1(\Gamma)) &\text{ and } v^{D_n} \rightarrow v \text{ in } L^2(0,T;L^2(\Gamma)),\\
	v^{D_n} \rightarrow v &\text{ a.e. in } \Gamma_T.
	\end{align*}
	
	The strong convergences $v^{D_n} \rightarrow v$ and $\varphi^{D_n} \rightarrow \varphi$ in $L^2(0,T;L^2(\G))$ here are a consequence of the Aubin-Lions theorem. We remark that these arguments are completely analogous to the proof of Theorem \ref{thm:existence} and we thus omit some details.
		
	It remains to discuss the limit process within the equations. Again, we refer to the proof of Theorem \ref{thm:existence} for the details since the arguments in both cases are completely analogous. As before, we use the theory of Nemytskii operators (see \cite[Theorem 10.58]{ReRo}) to derive
	\begin{equation*}
	q(u^{D_n},v^{D_n}) \rightarrow_{N\to\infty} q(u,v)\text{ and }W'(\varphi^{D_n})\rightarrow_{N\to\infty} W'(\varphi) \text{ in }  L^2(0,T;L^2(\Gamma)).
      \end{equation*}
      	Hence we can take the limit in \eqref{eq:weak_form_time_der_phi}--\eqref{eq:weak_form_theta}. 
	We choose a spatially constant test function in \eqref{eq:weak_form_diffU} and use this information to take the limit $n\rightarrow\infty$ to derive \eqref{eq:flux_red}. 
\end{proof}
	
\subsection{Higher Regularity} \label{sec:proof_reg}
We conclude this section with the proof of Theorem \ref{thm:higher_reg}. Before we prove the theorem, we state the following consequence from the growth assumptions \eqref{eq:higher_reg_growth_assumption1} on $D_u q$ and $D_v q.$ 
\begin{lemma}\label{l:higher_reg_growth_to_int}
	Let $u: B \rightarrow \R$ and $v: \G \rightarrow \R$ be bounded in $L^2(0,T;H^1(B))\cap L^\infty(0,T;L^2(B))$  and in $L^2(0,T;H^1(\G))\cap L^\infty(0,T;L^2(\G))$ respectively and assume that $q$ fulfils condition \eqref{eq:higher_reg_growth_assumption1}. Then 
	\begin{equation}\label{eq:higher_reg_assumption1}
	D_u q(u,v), D_vq(u,v) \in L^6(0,T;L^3(\Gamma)) \cap L^4(0,T;L^4(\Gamma)).
	\end{equation}
\end{lemma}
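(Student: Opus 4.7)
The statement is essentially a bookkeeping exercise with interpolation and trace inequalities: the pointwise bound in \eqref{eq:higher_reg_growth_assumption1} reduces the claim to showing that $|u|^{2/3}$ (where $u$ is evaluated on $\G$ via the trace) and $|v|$ lie in $L^6(0,T;L^3(\G))\cap L^4(0,T;L^4(\G))$. The constant term $1$ from \eqref{eq:higher_reg_growth_assumption1} is harmless because $\G$ and $(0,T)$ are both bounded, so by the triangle inequality it suffices to bound the two nonconstant pieces.

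For $v$, since $\G$ is a closed two-dimensional surface, I would invoke the Gagliardo--Nirenberg inequalities $\|v\|_{L^3(\G)}\le C\|v\|_{L^2(\G)}^{2/3}\|v\|_{H^1(\G)}^{1/3}$ and $\|v\|_{L^4(\G)}\le C\|v\|_{L^2(\G)}^{1/2}\|v\|_{H^1(\G)}^{1/2}$. Raising the first to the sixth power and the second to the fourth, then integrating in time, gives $\int_0^T\|v\|_{L^3(\G)}^6 \le C\|v\|_{L^\infty(L^2(\G))}^4 \|v\|_{L^2(H^1(\G))}^2$ and $\int_0^T\|v\|_{L^4(\G)}^4 \le C\|v\|_{L^\infty(L^2(\G))}^2 \|v\|_{L^2(H^1(\G))}^2$, both finite by assumption.

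For the trace of $u$ on $\G$, I would combine the interpolation inequality $\|u\|_{L^2(\G)}\le C\|u\|_{H^1(B)}^{1/2}\|u\|_{L^2(B)}^{1/2}$ already cited in the paper from \cite{LU} with the continuous trace-Sobolev embedding $H^1(B)\hookrightarrow H^{1/2}(\G)\hookrightarrow L^4(\G)$ (valid at the critical index on the two-dimensional surface). The interpolation inequality, together with $u\in L^\infty(L^2(B))\cap L^2(H^1(B))$, immediately gives $\int_0^T\|u\|_{L^2(\G)}^4\le C\|u\|_{L^\infty(L^2(B))}^2\|u\|_{L^2(H^1(B))}^2<\infty$, which is equivalent to $|u|^{2/3}\in L^6(0,T;L^3(\G))$. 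For the second target I would use the standard interpolation $\|u\|_{L^{8/3}(\G)}\le\|u\|_{L^4(\G)}^{1/2}\|u\|_{L^2(\G)}^{1/2}$ and then apply H\"older in time with conjugate exponents $3/2$ and $3$: this produces $\int_0^T\|u\|_{L^{8/3}(\G)}^{8/3}\le\bigl(\int_0^T\|u\|_{L^4(\G)}^2\bigr)^{2/3}\bigl(\int_0^T\|u\|_{L^2(\G)}^4\bigr)^{1/3}$, whose first factor is finite by trace--Sobolev and the second by the previous step. This is exactly $|u|^{2/3}\in L^4(0,T;L^4(\G))$.

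No serious obstacle arises; the only point of care is the choice of interpolation exponents, which is forced once the two target spaces $L^6(0,T;L^3(\G))$ and $L^4(0,T;L^4(\G))$ are matched against the growth rate $1+|u|^{2/3}+|v|$. I would close by adding the three contributions via the triangle inequality and collecting constants depending only on $T$, on $\G$, on $B$, and on the assumed bounds of $u$ and $v$.
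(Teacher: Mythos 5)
Your proof is correct, and its overall architecture coincides with the paper's: reduce via the pointwise growth bound to the functions $|u|^{2/3}$ (on $\G$, via the trace) and $|v|$, then combine space--time interpolation, the trace estimate $\norm{u}_{L^2(\G)}\leq C\norm{u}_{H^1(B)}^{1/2}\norm{u}_{L^2(B)}^{1/2}$, and Sobolev embeddings on the two-dimensional surface. The treatment of $v$ (your Gagliardo--Nirenberg inequalities are exactly the embeddings $H^{2/p}(\G)\hookrightarrow L^{p'}(\G)$ the paper gets from $v\in L^p(0,T;H^{2/p}(\G))$) and of the $L^6(0,T;L^3(\G))$ bound for $|u|^{2/3}$ (equivalently $u\in L^4(0,T;L^2(\G))$) are the same as in the paper. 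The one place you genuinely deviate is the $L^4(0,T;L^4(\G))$ bound for $|u|^{2/3}$, i.e.\ $u\in L^{8/3}(0,T;L^{8/3}(\G))$: the paper obtains this by interpolating in the bulk to get $u\in L^{8/3}(0,T;H^{3/4}(B))$, applying the fractional trace theorem to land in $L^{8/3}(0,T;H^{1/4}(\G))$, and then using the Sobolev embedding $H^{1/4}(\G)\hookrightarrow L^{8/3}(\G)$; you instead interpolate in Lebesgue scales between the two endpoint bounds $u\in L^2(0,T;L^4(\G))$ (trace--Sobolev at integer order) and $u\in L^4(0,T;L^2(\G))$, using H\"older in time. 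Your route is slightly more elementary in that it avoids traces of fractional-order bulk spaces, at the price of an extra H\"older step; both arguments use the same input information and yield the same conclusion, and all your exponents check out.
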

\begin{proof}
	We only prove the assertion of the lemma for $D_u q(u,v)$ since both $D_u q(u,v)$ and $D_v q(u,v)$ fulfil the same growth property.

	We start with the observation that for $s \in (0,1)$ the space $H^s(B)$ is an interpolation space between $L^2(B)$ and $H^1(B)$ of exponent $s$ and accordingly fulfils \[\norm{f}_{H^s(B)} \leq C \norm{f}_{L^2(B)}^{1-s} \norm{f}^s_{H^1(B)}\] for all $f \in H^1(B),$ see \cite[Section 7.4.5]{TR_FS2}. Together with Hölder's inequality we thus infer for $u \in L^2(0,T;H^1(B))\cap L^\infty(0,T;L^2(B))$ and $p \geq 2$ that $u \in L^p(0,T;H^{2/p}(B)).$ 
	
	For $2 \leq p < 4$ the Trace Theorem \cite[Theorem 7.39]{AD} hence allows us to deduce $u \in L^p(0,T;H^{2/p-1/2}(\G)).$ 
	
	Similarly, $v \in L^2(0,T;H^1(B))\cap L^\infty(0,T;L^2(B))$ implies that $v \in L^p(0,T;H^{2/p}(\G))$ for all $p \geq 2$ and in particular $v \in L^4(0,T;L^4(\G))$ for $p = 4$ since $H^{1/2}(\G)\hookrightarrow L^4(\G).$ %Thus the right hand-side is finite and $D_v q(u,v) \in L^2(0,T;L^4(\G)).$ 
	
	We use this considerations to estimate
	\begin{align*}
	\int_0^T \left( \int_{\G} \abs{D_uq(u,v)}^4 \right) \leq& C \int_0^T \left( \int_{\G} \abs{1+\abs{u}^{2/3}+\abs{v}}^4 \right) 
	\leq C \int_0^T \left( \int_{\G} 1 + \abs{u}^{8/3} + \abs{v}^4 \right) \\
	&\leq C(\G,T) + C\left( \int_0^T \int_{\G} \abs{u}^{8/3} \right) + C\left( \int_0^T \int_{\G} \abs{v}^4  \right),
	\end{align*}
	where the last term is finite by the considerations on $v$ above. As before, we find $u \in L^p(0,T;H^{2/p-1/2}(\G))$ for $2 \leq p < 4.$ By the Sobolev embedding theorem, we thus have $u \in L^p(0,T;L^{\frac{4p}{3p-4}}(\G))$ which for $p=\frac{8}{3}$ gives $u \in L^{8/3}(0,T;L^{8/3}(\G)).$ Hence the second term is finite as well, implying $D_u q(u,v) \in L^4(0,T;L^4(\G)).$ 
	
	Analogously, we find
	\begin{align*}
	\int_0^T \left( \int_{\G} \abs{D_uq(u,v)}^3 \right)^{6/3} \leq& C \int_0^T \left( \int_{\G} \abs{1+\abs{u}^{2/3}+\abs{v}}^3 \right)^2 
	\leq C \int_0^T \left( \int_{\G} 1 + \abs{u}^2 + \abs{v}^3 \right)^2 \\
	&\leq C(\G,T) + C\left( \int_0^T \left( \int_{\G} \abs{u}^2 \right)^2 \right) + C\left( \int_0^T \left( \int_{\G} \abs{v}^3 \right)^2 \right)
	\end{align*}
	Using again the interpolation estimate $\norm{u}_{L^2(\G)} \leq C \norm{u}_{H^1(B)}^{1/2}\norm{u}_{L^2(B)}^{1/2}$ (see \cite[Chapter 2, (2.27)]{LU}) and integrating $\norm{u(t)}^4_{L^2(\G)}$ in time thus yields 
	\[ \norm{u}_{L^4(0,T;L^2(\G))} \leq C \norm{u}_{L^\infty(0,T;L^2(B))} \norm{u}_{L^2(0,T;H^1(B))},\]
	which is bounded for $u \in L^2(0,T;H^1(B)\cap L^\infty(0,T;L^2(B))$. Therefore the second term on the right-hand side in the foregoing estimate is finite. As above, $v \in L^p(0,T;H^{2/p}(\G))$ for all $p \geq 2$ and in particular for $p=6.$ By the Sobolev embedding theorem we have $H^{1/3}(\G) \hookrightarrow L^3(\G)$ and thus the third term above is finite. Altogether, we obtain $D_u q(u,v) \in L^6(0,T;L^3(\Gamma)).$
\end{proof}
\begin{proof}[Proof of Theorem \ref{thm:higher_reg}]
	The proof of Theorem \ref{thm:higher_reg} can be divided into three steps. In the first step, we consider the approximate solutions $(u^N,v^N,\varphi^N,\mu^N,\theta^N)$ from the proof of the existence theorem (Theorem \ref{thm:existence}) and prove regularity estimates for these functions and their time derivatives. Secondly, we show that the limit functions of these time derivatives as $N \rightarrow \infty$ converge to solutions of the linearised model. This step is summarized in Lemma \ref{l:higher_reg_sol_lin_sys}. Finally, we derive higher regularity for the full system from the additional information gathered from the linearised system.
	
	\textit{First Step: Higher regularity for the approximate solutions.}
	We recall the proof of Theorem \ref{thm:existence} and let $(u^N,v^N,\varphi^N,\mu^N,\theta^N)$ denote the subsequence of solutions to the approximate problem \eqref{eq:weak_diffU_approx}--\eqref{eq:weak_v_approx} which converges to $(u,\varphi,v,\mu,\theta)$. Let $P_N^\G$ denote the orthogonal projection in $H^1(\Gamma)$ onto $V^N_\Gamma,$ where $V^N_\Gamma$ is defined as in the proof of Theorem \ref{thm:existence}. We remark that $P_N^\G$ is also orthogonal with respect to the inner product on $L^2(\G).$
	 
	Thus $\varphi^N, \mu^N$ and $\theta^N \in V_\Gamma^N$ fulfil
	\begin{equation*}
	\int_{\Gamma} \mu^N \omega = \varepsilon \int_{\Gamma} \SG \varphi^N \cdot \SG \omega + \frac{1}{\varepsilon}\int_{\Gamma} P_N^\G W'(\varphi^N) \omega - \int_{\Gamma} \frac{\theta^N}{2} \omega
	\end{equation*}
	for all $\omega \in V^N_\Gamma.$ By the orthogonal decomposition $H^1(\Gamma) = V_\Gamma^N \oplus \left(V_\Gamma^N\right)^\perp$ this equation also holds for all test functions $\omega \in H^1(\Gamma),$ which implies that $\varphi^N$ is a weak solution to the elliptic equation
	\begin{equation}\label{eq:elliptic_varphi_N}
	- \varepsilon \SL\varphi^N = \mu^N + \frac{\theta^N}{2} - \frac{1}{\varepsilon}P_N^\G W'(\varphi^N).
	\end{equation} 
	Furthermore, the energy estimate \eqref{eq:energy_est} together with \eqref{eq:control_mean_value_mu} yields \[ \mu^N,\theta^N \in L^2(0,T;H^1(\Gamma)) \text{ and } \varphi^N \in L^\infty(0,T;H^1(\Gamma)). \]
	In particular,
	\begin{equation}\label{eq:int_phi_N}
	\varphi^N \in L^\infty(0,T;L^p(\Gamma)) \text{ is bounded for all } 1\leq p < \infty 
	\end{equation} 
	by the Sobolev embedding theorem in dimension $\dim \Gamma = 2.$
	
	Observe that therefore every polynomial in $\varphi^N$ is an element of $ L^\infty(0,T;L^p(\Gamma))$ for all $1\leq p < \infty.$ We will exploit this property in particular with respect to $W'(\varphi^N), W''(\varphi^N),$ and $W'''(\varphi^N)$ since these terms grow at most polynomial in $\varphi^N.$ For example, $W'$ fulfils $\abs{W'(s)} \leq C (\abs{s}^3+1)$ for some $C>0$ and $s\in\R$.    
	
	As a first application, we directly deduce the boundedness of $W'(\varphi^N)$ in $L^2(0,T;L^2(\Gamma)).$ Hence the right-hand side in \eqref{eq:elliptic_varphi_N} is in $L^2(0,T;L^2(\Gamma))$. Elliptic theory, see e.g. \cite[Theorem 8.8, Theorem 8.12]{GT}, thus implies that the solution $\varphi^N$ to \eqref{eq:elliptic_varphi_N} fulfils $\varphi^N$ is bounded in $L^2(0,T;H^2(\G)).$ 	We remark that all these estimates are derived from the energy estimate \eqref{eq:energy_est}, which is uniform in $N.$ Hence we conclude that $\lbrace \varphi^N \rbrace_{N\in\N} \subset L^2(0,T;H^2(\G))$ is uniformly bounded in $N.$ 
	
	Additionally, the Sobolev embedding and $\varphi^N \in L^2(0,T;H^2(\G))$ directly yield
	\begin{equation}\label{eq:int_SG_phi_N}
	\norm{\varphi^N}_{L^2(0,T;W^{1,p}(\G))} \leq C \text{ for all } 1\leq p < \infty.
	\end{equation} 
	We calculate
	\begin{align*}
	\int_0^T \int_{\Gamma} \abs{\SG\left(W'(\varphi^N)\right)}^2 &\leq  \int_0^T \int_{\Gamma} \abs{W''(\varphi^N)}^2\abs{ \SG\varphi^N}^2 \\
	%	&\leq C \int_0^T \int_{\Gamma} \left(\abs{\varphi^N}^2 + 1\right)^2\abs{ \SG\varphi^N}^2 \\
	&\leq  \int_0^T \left( \int_{\G} \left(\abs{W''(\varphi^N)}\right)^4 \right)^{1/2} \left( \int_{\G} \abs{ \SG\varphi^N}^4 \right)^{1/2} \\
	%	&\leq \int_0^T \left( \int_{\G} \left(\abs{\varphi^N}^8 + 1\right) \right)^{1/2} \left( \int_{\G} \abs{ \SG\varphi^N}^4 \right)^{1/2}\\
	&\leq C \left(\norm{W''(\varphi^N)}_{L^\infty(0,T;L^4(\G))}+1\right) \norm{\SG \varphi^N}_{L^2(0,T;L^4(\G))}^2, 
	\end{align*}
	which yields a uniform bound in $N$ for $\norm{W'(\varphi^N)}_{L^2(0,T;H^1(\Gamma))}$ by \eqref{eq:energy_est} and the foregoing discussion.
	Moreover, $\norm{P_N^\G}_{\mathcal{L}(H^1(\Gamma))} \leq 1$ implies \[\norm{P_N^\G W'(\varphi^N)}_{L^2(0,T;H^1(\Gamma))} \leq \norm{W'(\varphi^N)}_{L^2(0,T;H^1(\Gamma))}, \]
	showing that the right-hand side in $\eqref{eq:elliptic_varphi_N}$ belongs to $L^2(0,T;H^1(\Gamma))$ and that the corresponding bound is uniform in $N.$ As a direct consequence, we infer
	\begin{equation*}
	\norm{\varphi^N}_{L^2(0,T;H^3(\G)) \cap L^\infty(0,T;H^1(\Gamma)))} \leq C
	\end{equation*} uniformly in $N$ by using standard elliptic theory, see e.g. \cite[Theorem 8.8, Theorem 8.12]{GT}.
	
	We remark for later use that the same argument applied to equation \eqref{eq:CH2} also implies
	\begin{equation}\label{eq:int_SG_phi}
	\varphi \in L^2(0,T;H^3(\G)) \cap L^\infty(0,T;H^1(\Gamma)) \hookrightarrow L^2(0,T;W^{2,p}(\G)) \text{ for all } 1\leq p < \infty.
	\end{equation}
	
	Next we differentiate the equations \eqref{eq:weak_diffU_approx}--\eqref{eq:weak_v_approx} in time. Note that the approximate solutions $u^N, \varphi^N, v^N, \mu^N,$ and $\theta^N$ were all constructed from solutions to a system of ordinary differential solutions, i.e. they are all differentiable in $t.$ We introduce the notation
	\[ \ut = \partial_t u^N, \phit = \partial_t \varphi^N, \vt=\partial_t v^N, \mut=\partial_t \mu^N \text{ and } \thetat = \partial_t \theta^N. \]
	The tuple $(\ut,\phit,\vt,\mut,\thetat)$ solves for all $\kappa \in V_B^N$ and all $\omega \in V_\G^N$
	\begin{alignat}{2}
	\label{eq:weak_dt_diffU_approx}
	\int_B \partial_t \ut \kappa &= -D \int_B \nabla \ut \cdot \nabla \kappa - \int_\Gamma \frac{d}{dt}\left(q(u^N,v^N)\right)\kappa,\\
	\label{eq:weak_dt_CH1_approx}
	\int_\Gamma \pd_t \phit \omega &= -\int_\Gamma \SG \mut \cdot \SG \omega\\
	\label{eq:weak_dt_CH2_approx}
	\int_\Gamma \mut \omega &=  \int_\Gamma \left[ \eps \SG \phit \cdot \SG \omega + \eps^{-1}W''(\varphi^N) \phit \omega - \frac{\thetat}{2}\omega \right] \\
	\label{eq:weak_dt_theta_approx}
	\int_{\Gamma} \thetat \omega &= \frac{2}{\delta}\int_{\Gamma} (2\vt - \phit)\omega \\
	\label{eq:weak_dt_v_approx}
	\int_\Gamma \frac{\delta}{4}\pd_t \thetat \omega + \int_\Gamma \frac{1}{2}\pd_t \phit \omega &= -\int_\Gamma \SG \thetat \cdot \SG \omega + \int_\Gamma \frac{d}{dt}\left(q(u^N,v^N)\right)\omega. 
	\end{alignat}
	\begin{lemma}\label{l:higher_reg_energy_time_der}
		Let $(\ut,\phit,\vt,\mut,\thetat)$ be defined as above. Under the assumptions of Theorem \ref{thm:higher_reg} the estimate
		\begin{align}
		\sup_{t\in(0,T)} &\left\lbrace \frac{\varepsilon}{2}\norm{\SG \phit}_{L^2(\Gamma)}^2 + \frac{\delta}{8}\norm{\thetat}_{L^2(\Gamma)}^2 + \frac{1}{2} \norm{\ut}_{L^2(B)}^2 \right\rbrace \nonumber \\ &+ \int_{\Gamma} \abs{\SG \mut}^2 +\int_{\Gamma}\abs{\SG \thetat}^2 + D \int_B \abs{\nabla \ut}^2 \leq C(T). \label{eq:energy_est_dt}
		\end{align}
		holds. The estimate is uniform in $N.$
	\end{lemma}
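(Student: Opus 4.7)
The plan is to mirror the energy argument of Theorem~\ref{thm:existence}, but applied to the time-differentiated Galerkin system \eqref{eq:weak_dt_diffU_approx}--\eqref{eq:weak_dt_v_approx}. Specifically, I would test \eqref{eq:weak_dt_diffU_approx} with $\kappa=\ut$, \eqref{eq:weak_dt_CH1_approx} with $\omega=\mut$, \eqref{eq:weak_dt_CH2_approx} with $\omega=-\pd_t\phit$, and \eqref{eq:weak_dt_v_approx} with $\omega=\thetat$; all four choices are admissible since the differentiated Galerkin coefficients remain in the respective finite-dimensional spaces. Summing the four identities, the cross terms $\tfrac{1}{2}\int_\G\thetat\,\pd_t\phit$ coming from the two tests with $-\pd_t\phit$ and $\thetat$ cancel exactly, as in the derivation of \eqref{eq:energy_step1}, producing the identity
\begin{align*}
\frac{d}{dt}&\left[\tfrac{1}{2}\norm{\ut}_{L^2(B)}^2+\tfrac{\eps}{2}\norm{\SG\phit}_{L^2(\G)}^2+\tfrac{\delta}{8}\norm{\thetat}_{L^2(\G)}^2\right]+D\norm{\nabla\ut}_{L^2(B)}^2+\norm{\SG\mut}_{L^2(\G)}^2+\norm{\SG\thetat}_{L^2(\G)}^2 \\
&=\int_\G\tfrac{d}{dt}q(u^N,v^N)\,(\thetat-\ut)\,-\,\eps^{-1}\int_\G W''(\varphi^N)\phit\,\pd_t\phit.
\end{align*}

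The first right-hand term is expanded via the chain rule as $\tfrac{d}{dt}q=D_uq\,\ut+D_vq\,\vt$ with $\vt=\tfrac{\delta}{4}\thetat+\tfrac{1}{2}\phit$. Lemma~\ref{l:higher_reg_growth_to_int} guarantees uniform control of $D_uq,D_vq$ in $L^4(0,T;L^4(\G))\cap L^6(0,T;L^3(\G))$; combined with the trace embedding $H^1(B)\hookrightarrow L^4(\G)$, the Sobolev embeddings on the two-dimensional surface $\G$, and Young's inequality, this term is absorbed into suitable fractions of $D\norm{\nabla\ut}^2+\norm{\SG\thetat}^2+\norm{\SG\mut}^2$ on the left-hand side, leaving a remainder of the form $C(t)\cdot(\text{energy})$ with $C\in L^1(0,T)$.

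The delicate second term is where the proof essentially has to work, since a direct estimate would require control of $\norm{\pd_t\phit}_{L^2(\G)}$, which is precisely part of the regularity we are trying to establish. The key is to exploit the fourth-order Cahn--Hilliard structure: equation \eqref{eq:weak_dt_CH1_approx} together with the $L^2$-orthonormality of the eigenbasis $\lbrace\omega_i\rbrace$ yields the duality bound $\norm{\pd_t\phit}_{(H^1(\G))'}\leq\norm{\SG\mut}_{L^2(\G)}$. Pairing against $W''(\varphi^N)\phit\in H^1(\G)$ gives
\[\eps^{-1}\left|\int_\G W''(\varphi^N)\phit\,\pd_t\phit\right|\leq C\,\norm{W''(\varphi^N)\phit}_{H^1(\G)}\,\norm{\SG\mut}_{L^2(\G)}.\]
The $H^1$-norm of the product is then estimated by the product rule using $W'''(s)=24s$, together with the uniform bounds $\varphi^N\in L^\infty(0,T;L^p(\G))$ for all $p<\infty$ and $\varphi^N\in L^2(0,T;W^{2,p}(\G))$ established in the first step of the proof; Young's inequality absorbs $\tfrac{1}{2}\norm{\SG\mut}_{L^2(\G)}^2$ into the dissipation and again leaves a remainder $C(t)\cdot(\text{energy})$ with $C\in L^1(0,T)$.

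Initial bounds on $\ut(0),\phit(0),\thetat(0)$ are obtained by testing \eqref{eq:weak_diffU_approx}--\eqref{eq:weak_v_approx} at $t=0$ against these functions themselves, using that $\lbrace\kappa_i\rbrace$ consists of Laplace--Neumann eigenfunctions (so the boundary terms vanish on integration by parts) and that the Galerkin initial data are smooth. Applying Gronwall's inequality to the resulting differential inequality then yields \eqref{eq:energy_est_dt} with a constant independent of $N$. The principal obstacle throughout is the $W''$-term: without the Cahn--Hilliard duality trick $\pd_t\phit\sim\SL\mut$, one would need an a~priori $L^2$-bound on $\pd_t\phit$ that is not yet available, and it is the fourth-order structure alone that makes the argument close.
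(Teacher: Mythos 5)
Your proposal follows essentially the same route as the paper's proof: the same four test functions for the time-differentiated Galerkin system, the same treatment of the exchange term via Lemma~\ref{l:higher_reg_growth_to_int} together with trace, Sobolev and Young estimates, and the same key device for the $W''$-term --- the paper uses the identity $\partial_t\phit=\SL\mut$ in $V^N_\Gamma$ followed by integration by parts, which is precisely your duality bound $\norm{\partial_t\phit}_{(H^1(\Gamma))'}\le\norm{\SG\mut}_{L^2(\Gamma)}$ paired with $W''(\varphi^N)\phit\in H^1(\Gamma)$ --- before absorbing the gradient terms and applying Gronwall with $L^1$-in-time coefficients. The argument is correct and matches the paper's proof in all essential respects.
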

	\begin{proof}[Proof of Lemma \ref{l:higher_reg_energy_time_der}]
		We choose $\omega=\ut$ as a test function in \eqref{eq:weak_dt_diffU_approx}, $\omega=\mut$ in \eqref{eq:weak_dt_CH1_approx}, $\omega=\partial_t \phit$ in \eqref{eq:weak_dt_CH2_approx} and $\omega = \thetat$ in \eqref{eq:weak_dt_v_approx}. We add these equations to deduce
		\begin{align}\label{eq:higher_reg_energy_eq}
		\frac{\varepsilon}{2} \frac{d}{dt}\int_{\Gamma} \abs{\SG \phit}^2 + &\frac{\delta}{8}\frac{d}{dt}\int_{\Gamma} \abs{\thetat}^2 + \int_{\Gamma} \abs{\SG \mut}^2 +\int_{\Gamma}\abs{\SG \thetat}^2 \nonumber + \frac{1}{2}\frac{d}{dt}\int_{B} \abs{\ut}^2 + D \int_B \abs{\nabla \ut}^2 \nonumber \\ & = -\frac{1}{\varepsilon}\int_{\Gamma} W''(\varphi^N)\phit\partial_t \phit + \int_{\Gamma} \frac{d}{dt}\left(q(u^N,v^N)\right)\left(\thetat-\ut\right). 
		\end{align}
		To estimate the right-hand side in \eqref{eq:higher_reg_energy_eq} we first compute for any $\gamma > 0$
		\begin{align}\label{eq:higher_reg_W_est_step1}
		\abs{\frac{1}{\varepsilon}\int_\Gamma W''(\varphi^N)\phit \partial_t \phit} &= \abs{\frac{1}{\varepsilon}\int_\Gamma \SG\left(W''(\varphi^N)\phit\right) \cdot \SG \mut} \nonumber \\ &\leq \frac{C_\gamma}{\varepsilon}\int_\Gamma \abs{\SG\left(W''(\varphi^N)\phit\right)}^2 + \frac{\gamma}{\varepsilon}\int_\Gamma \abs{\SG \mut}^2
		\end{align}
		where we have used that $\partial_t \phit = \SL \mut$ almost everywhere since by definition we have $\phit \in V_\G^N$ and $\mut \in V_\G^N$ for all $t \in (0,T),$ i.e. \eqref{eq:weak_dt_CH1_approx} implies for all $t \in (0,T)$ the identity $\partial_t \phit = \SL \mut$ in $V_\G^N$ and thus $\partial_t \phit = \SL \mut$ almost everywhere in $\G_T.$ 
		
		The first term on the right-hand side can be controlled by $\int_{\Gamma } \abs{\SG \phit}^2$ in the following way. By the growth properties of $W$, we have 
		\begin{align}\label{eq:higher_reg_W_est_step2}
		\int_\Gamma &\abs{\SG\left(W''(\varphi^N)\phit\right)}^2 \leq 2 \int_\Gamma \abs{W''(\varphi^N)}^2 \abs{\SG \phit}^2 + 2 \int_\Gamma \abs{\SG \left(W''(\varphi^N)\right)}^2 \abs{\phit}^2 \nonumber \\ &\leq C \left( \norm{\varphi^N(t)}^4_{L^\infty(\Gamma)} + 1 \right) \left( \int_{\Gamma} \abs{\SG\phit}^2 \right) + 2 \int_\Gamma \abs{\SG \left(W''(\varphi^N)\right)}^2 \abs{\phit}^2. 
		\end{align}
		Moreover, we apply Hölder's inequality to deduce
		\begin{align*}
		\int_\Gamma \abs{\SG\left(W''(\varphi^N)\right)}^2\abs{\phit}^2 &\leq C\int_{\Gamma} \abs{\varphi^N+1}^2\abs{\SG \varphi^N}^2\abs{\phit}^2 \nonumber
		\\
		%	&\leq C\left( \int_{\Gamma} \abs{\varphi^N + 1}^3 \abs{\SG \varphi^N}^3 \right)^{2/3}\left( \int_{\Gamma} \abs{\phit}^6 \right)^{1/3} \nonumber
		%	\\
		&\leq C \left( \int_{\Gamma} \abs{\varphi^N+1}^6 \right)^{2/6} \left(\int_{\Gamma}\abs{\SG \varphi^N}^6\right)^{2/6} \left(\int_{\Gamma}\abs{\phit}^6\right)^{1/3}.
		\end{align*}
		Using that $\int_{\Gamma} \phit = \int_{\Gamma} \SL \mu^N = 0$ we have furthermore
		\[ \left(\int_{\Gamma}\abs{\phit}^6\right)^{1/3} \leq C \left(\int_{\Gamma}\abs{\SG\phit}^2\right)  \] 
		by the Sobolev embedding theorem. Hence
		\begin{align*}
		\int_\Gamma \abs{\SG\left(W''(\varphi^N)\right)}^2\abs{\phit}^2 & \leq C \left( \int_{\Gamma} \abs{\varphi^N+1}^6 \right)^{2/6} \left(\int_{\Gamma}\abs{\SG \varphi^N}^6\right)^{2/6} \left(\int_{\Gamma}\abs{\SG \phit}^2\right).
		\end{align*}
		Thus \eqref{eq:higher_reg_W_est_step2} reads
		\begin{align}\label{eq:higher_reg_W_est_step3}
		\int_\Gamma \abs{\SG\left(W''(\varphi^N)\phit\right)}^2 \leq& 2 \int_\Gamma \abs{W''(\varphi^N)}^2 \abs{\SG \phit}^2 + 2 \int_\Gamma \abs{\SG \left(W''(\varphi^N)\right)}^2 \abs{\phit}^2 \nonumber \\ \leq& C \left( \norm{\varphi^N(t)}^4_{L^\infty(\Gamma)} + 1 \right) \left( \int_{\Gamma} \abs{\SG\phit}^2 \right)\nonumber \\ &+ C \left( \int_{\Gamma} \abs{\varphi^N+1}^6 \right)^{2/6} \left(\int_{\Gamma}\abs{\SG \varphi^N}^6\right)^{2/6} \left(\int_{\Gamma}\abs{\SG \phit}^2\right).
		\end{align}
		We observe that $\left(\norm{\varphi^N(t)}^4_{L^\infty(\Gamma)}+1\right)$ is bounded in $L^1(0,T)$ by the following argument. Since $\varphi^N \in L^\infty(0,T;H^1(\G))$ and $\varphi^N \in L^2(0,T;H^2(\G)),$ Hölder's inequality implies $\varphi^N \in L^4\left(0,T;H^{3/2}(\G)\right),$ where $H^{3/2}(\G)$ is the interpolation space of exponent $s=1/2$ between $H^1(\G)$ and $H^2(\G).$ Hence the embedding \[\left(H^1(\G);H^2(\G) \right)_{1/2,2} = H^{3/2}(\G) \hookrightarrow C^{0,\alpha}(\G) \text{ for } 0<\alpha<1/2 \] yields $\varphi^N \in L^4(0,T;L^\infty(\G)).$ Likewise, \eqref{eq:int_phi_N} and \eqref{eq:int_SG_phi_N} imply 
		\[ \left( \int_{\Gamma} \abs{\varphi^N(t)+1}^6 \right)^{2/6} \in L^\infty(0,T) \quad \text{ and } \quad \left(\int_{\Gamma}\abs{\SG \varphi^N(t)}^6\right)^{2/6} \in L^1(0,T)\]
		uniformly in $N,$ from which we deduce that \[ \left(\int_{\Gamma} \abs{\varphi^N(t)+1}^6 \right)^{2/6}\left(\int_{\Gamma}\abs{\SG \varphi^N(t)}^6\right)^{2/6} \in L^1(0,T). \]
		Hence \[ F^N(t) := \max \left\lbrace \left( \int_{\Gamma} \abs{\varphi^N(t)+1}^6 \right)^{2/6} \left(\int_{\Gamma}\abs{\SG \varphi^N(t)}^6\right)^{2/6},  \left( \norm{\varphi^N(t)}^4_{L^\infty(\Gamma)} + 1 \right) \right\rbrace \in L^1(0,T)\]
		and there exists a constant $C>0$ such that \[ \norm{F^N}_{L^1(0,T)} \leq C \]
		uniformly in $N.$
	
		Combining \eqref{eq:higher_reg_W_est_step1} and \eqref{eq:higher_reg_W_est_step3} we arrive at
		\begin{align}\label{eq:higher_reg_est_W_final}
		\abs{\frac{1}{\varepsilon}\int_\Gamma W''(\varphi^N)\phit \partial_t \phit} \leq \frac{2C_\gamma}{\varepsilon} F^N(t) \left(\int_{\Gamma}\abs{\SG \phit}^2\right) + 2\frac{\gamma}{\varepsilon} \int_{\Gamma} \abs{\SG \mut}^2.
		\end{align}
		We have thus estimated the first term on the right-hand side in \eqref{eq:higher_reg_energy_eq} and it remains to control the second term on the right-hand side in this inequality. To this end, we compute
		\begin{align}\label{eq:higher_reg_dt_q_start}
		&\abs{\int_\Gamma \frac{d}{dt}\left(q(u^N,v^N)\right)\left(\thetat-\ut \right)} \nonumber \\ &\leq \int_{\Gamma} \abs{D_u q(u^N,v^N)}\abs{{\ut}}^2 + \int_\Gamma \abs{D_u q(u^N,v^N)}\abs{\ut}\abs{\thetat} \nonumber \\ &\quad + \int_\Gamma \abs{D_v q(u^N,v^N)}\abs{\vt}\abs{\ut} + \int_\Gamma \abs{D_v q(u^N,v^N)}\abs{\vt}\abs{\thetat} 
		\end{align}
		In order to shorten the estimate for the last three terms, let $f,g,h$ be measurable functions on $\G.$ We deduce for all $\gamma > 0$
		\begin{align}\label{eq:higher_reg_abstract_control_exchange}
		\int_{\Gamma} \abs{f}\abs{g}\abs{h} & \leq \norm{f}_{L^4(\Gamma)}\norm{g}_{L^2(\Gamma)} \norm{h}_{L^4(\Gamma)} \nonumber \\
		&\leq C_\gamma \norm{f}_{L^4(\Gamma)}^2\norm{g}^2_{L^2(\Gamma)} + \gamma \norm{h}_{L^4(\Gamma)}^2
		\end{align}
		from Young's inequality, where we used the generalized Hölder inequality in the first step.
		
		We remark that using the Sobolev Embedding Theorem and the Trace Theorem we can always estimate
		\[ \norm{\ut}_{L^4(\Gamma)} \leq C \norm{\ut}_{H^{1/2}(\Gamma)} \leq \tilde{C} \norm{\ut}_{H^1(B)}. \]
		Moreover, $\vt = \frac{\delta}{2} \thetat + \frac{1}{2}\phit$ and thus by Poincar\'{e}'s inequality
		\begin{equation}\label{eq:higher_reg_est_v_by_theta_varphi}
		\norm{\vt}_{L^2(\Gamma)} \leq \frac{\delta}{2}\norm{\thetat}_{L^2(\Gamma)}  + \frac{1}{2}\norm{\phit}_{L^2(\Gamma)} \leq \frac{\delta}{2}\norm{\thetat}_{L^2(\Gamma)}  + \frac{C}{2}\norm{\SG \phit}_{L^2(\Gamma)}. 
		\end{equation} 
		Choosing $f=D_v q(u^N,v^N), g=\vt, h=\ut$ and $f=D_u q(u^N,v^N), g= \thetat, h=\ut $ respectively in \eqref{eq:higher_reg_abstract_control_exchange}, we deduce
		\begin{align}\label{eq:higher_reg_dt_q1}
		\int_\Gamma & \abs{D_u q(u^N,v^N)}\abs{\ut}\abs{\thetat} + \int_\Gamma \abs{D_v q(u^N,v^N)}\abs{\vt}\abs{\ut} \nonumber \\ \leq & C_\gamma\left( \norm{D_u q(u^N,v^N)}^2_{L^4(\Gamma)} + \frac{\delta}{2}\norm{D_v q(u^N,v^N)}^2_{L^4(\Gamma)} \right) \norm{\thetat}^2_{L^2(\Gamma)} \nonumber \\&+ \frac{C_\gamma}{2}\norm{D_v q(u^N,v^N)}^2_{L^4(\Gamma)}\norm{\SG \phit}^2_{L^2(\Gamma)} + \gamma C \left( \norm{\ut}^2_{L^2(B)} + \norm{\nabla \ut}^2_{L^2(B)} \right).
		\end{align}
		Note that we used \eqref{eq:higher_reg_est_v_by_theta_varphi} to estimate $\norm{\vt}_{L^2(\Gamma)}.$
		
		Now we choose $f=D_v q(u^N,v^N), g=\vt, h=\theta^N$ in \eqref{eq:higher_reg_abstract_control_exchange} to obtain
		\begin{align}\label{eq:higher_reg_dt_q2}
		\int_\Gamma \abs{D_v q(u^N,v^N)}\abs{\vt}\abs{\thetat} \leq C_\gamma \norm{D_v q(u^N,v^N)}^2_{L^4(\Gamma)}&\left( \frac{\delta}{2}\norm{\thetat}^2_{L^2(\Gamma)}  + \frac{1}{2}\norm{\SG \phit}^2_{L^2(\Gamma)} \right) \nonumber \\ &+ \gamma C \left( \norm{\thetat}^2_{L^2(\Gamma)} + \norm{\SG \thetat}^2_{L^2(\Gamma)} \right).
		\end{align}
		Finally, we use again the trace and Sobolev embedding theorems together with the interpolation inequality $\norm{f}_{H^s(\G)} \leq C \norm{f}_{L^2(\G)}^{1-s}\norm{f}^s_{H^1(\G)}$ to estimate
		\begin{align}\label{eq:higher_reg_dt_q3}
		\int_{\Gamma} \abs{D_u q(u^N,v^N)}\abs{{\ut}}^2 \leq& \norm{D_u q(u^N,v^N)}_{L^3(\Gamma)}\norm{\ut}^2_{L^3(\Gamma)} \nonumber \\ \leq& C \norm{D_u q(u^N,v^N)}_{L^3(\Gamma)}\norm{\ut}^2_{H^{1/3}(\Gamma)} \nonumber\\ \leq& C \norm{D_u q(u^N,v^N)}_{L^3(\Gamma)}\norm{\ut}^2_{H^{5/6}(B)} \nonumber\\ \leq&
		C\norm{D_u q(u^N,v^N)}_{L^3(\Gamma)}\norm{\ut}^{1/3}_{L^{2}(B)}\norm{\ut}^{5/3}_{H^{1}(B)} \nonumber\\ \leq& C_\gamma \norm{D_u q(u^N,v^N)}^6_{L^3(\Gamma)}\norm{\ut}^{2}_{L^{2}(B)} + \gamma \norm{\ut}^{2}_{H^{1}(B)}.
		\end{align}
		To simplify the notation, we introduce
		\[ M^N(t) = \max \left\lbrace \norm{D_u q(u^N,v^N)}^6_{L^3(\Gamma)} , \norm{D_u q(u^N,v^N)}^2_{L^4(\Gamma)}, \norm{D_v q(u^N,v^N)}^2_{L^4(\Gamma)} \right\rbrace. \]
		The functions $u^N$ and $v^N$ fulfil the assumptions of Lemma \ref{l:higher_reg_growth_to_int} by \eqref{eq:energy_est}. Hence \eqref{eq:higher_reg_assumption1} implies $M^N(t) \in L^1(0,T).$ Moreover, the bound on $M^N$ in $L^1(0,T)$ is uniform in $N$ since it is derived from the uniform estimate \eqref{eq:energy_est}. 
		
		We combine \eqref{eq:higher_reg_dt_q_start}, \eqref{eq:higher_reg_dt_q1}, \eqref{eq:higher_reg_dt_q2}, and \eqref{eq:higher_reg_dt_q3} and obtain
		\begin{align}\label{eq:higher_reg_dt_q_final}
		&\abs{\int_\Gamma \frac{d}{dt}\left(q(u^N,v^N)\right)\left(\thetat-\ut \right)} \nonumber \\ &\leq C_\gamma \left( M^N(t)+1\right)\norm{\ut}^2_{L^2(B)} + C_\gamma \left( (1+\delta)M^N(t) +1 \right) \norm{\thetat}^2_{L^2(\Gamma)} \nonumber \\ &\quad + C_\gamma M^N(t) \norm{\SG \phit}^2_{L^2(\Gamma)} + \gamma C \norm{\SG \thetat}^2_{L^2(\Gamma)} + \gamma C \norm{\nabla \ut}^2_{L^2(B)},
		\end{align}
		which controls the second term on the right-hand side in \eqref{eq:higher_reg_energy_eq}.
		We thus return to \eqref{eq:higher_reg_energy_eq} and use \eqref{eq:higher_reg_est_W_final} and \eqref{eq:higher_reg_dt_q_final} to deduce
		\begin{align*}
                  \frac{\varepsilon}{2} \frac{d}{dt}\int_{\Gamma} &\abs{\SG \phit}^2 + \frac{\delta}{8}\frac{d}{dt}\int_{\Gamma} \abs{\thetat}^2 + \int_{\Gamma} \abs{\SG \mut}^2 +\int_{\Gamma}\abs{\SG \thetat}^2 %\nonumber \\
                  + \frac{1}{2}\frac{d}{dt}\int_{B} \abs{\ut}^2 + D \int_B \abs{\nabla \ut}^2 \nonumber \\ & \leq C_\gamma \left( M^N(t)+1\right)\norm{\ut}^2_{L^2(B)} + C_\gamma \left( (1+\delta)M^N(t) +1 \right) \norm{\thetat}^2_{L^2(\Gamma)} \nonumber \\ &\quad + C_\gamma \left(M^N(t)+\frac{2F^N(t)}{\varepsilon}\right) \norm{\SG \phit}^2_{L^2(\Gamma)} + \gamma C \norm{\SG \thetat}^2_{L^2(\Gamma)} \nonumber \\ &\quad + \gamma C \norm{\nabla \ut}^2_{L^2(B)} + 2\frac{\gamma}{\varepsilon} \int_{\Gamma} \abs{\SG \mut}^2.
		\end{align*}
		By taking $\gamma$ to be sufficiently small, we can absorb the gradient terms on the right-hand side and conclude
		\begin{align*}
                  \frac{\varepsilon}{2} \frac{d}{dt}\int_{\Gamma} &\abs{\SG \phit}^2 + \frac{\delta}{8}\frac{d}{dt}\int_{\Gamma} \abs{\thetat}^2 + \int_{\Gamma} \abs{\SG \mut}^2 +\int_{\Gamma}\abs{\SG \thetat}^2 %\nonumber \\ &
                                                                                                                                                                                                                                   + \frac{1}{2}\frac{d}{dt}\int_{B} \abs{\ut}^2 + D \int_B \abs{\nabla \ut}^2 \nonumber \\ & \leq C_\gamma \left( M^N(t)+1\right)\norm{\ut}^2_{L^2(B)} + C_\gamma \left( (1+\delta)M^N(t) +1 \right) \norm{\thetat}^2_{L^2(\Gamma)} \nonumber \\ &\quad + C_\gamma \left(M^N(t)+\frac{2F^N(t)}{\varepsilon}\right) \norm{\SG \phit}^2_{L^2(\Gamma)}.
		\end{align*}
		Because of $M^N(t) \in L^1(0,T)$ and $F^N(t) \in L^1(0,T)$ uniformly in $N,$ Gronwall's inequality yields \eqref{eq:energy_est_dt}.
	\end{proof}
	\textit{Second step: Taking the limit $N \rightarrow \infty$.} Estimate \eqref{eq:energy_est_dt} is uniform in $N$ and allows to extract weakly converging subsequences, which for convenience we denote again by $\ut,\phit, \mut$ and $\thetat.$ Hence there exist functions 
	\begin{align*}
	&\tilde{u} \in L^\infty(0,T;L^2(B))\cap L^2(0,T;H^1(B)),\\
	&\tilde{\varphi} \in L^\infty(0,T;H^1(\Gamma)), \\
	&\tilde{\theta} \in L^\infty(0,T;L^2(\Gamma))\cap L^2(0,T;H^1(\Gamma))  \text{ and }\\
	&\tilde{\mu} \in L^2(0,T;H^1(\Gamma))
	\end{align*}
	such that
	\begin{align}
	&\ut \rightharpoonup \tilde{u} \text{ in } L^2(0,T;H^1(B)),\label{eq:high_reg_weak_conv1}\\
	&\phit \rightharpoonup \tilde{\varphi} \text{ in } L^2(0,T;H^1(\Gamma)),\label{eq:high_reg_weak_conv2}\\
	&\thetat \rightharpoonup \tilde{\theta} \text{ in } L^2(0,T;H^1(\Gamma))  \text{ and } \label{eq:high_reg_weak_conv3}\\
	&\mut \rightharpoonup \tilde{\mu} \text{ in } L^2(0,T;H^1(\Gamma))\label{eq:high_reg_weak_conv4}.
	\end{align}
	We remark that these convergences allow us to conclude
        \[
          \partial_t u = \tilde{u}, \ \partial_t \varphi = \tilde{\varphi}, \ \partial_t \theta = \tilde\theta, \text{ and } \partial_t \mu=\tilde\mu
        \]
        in the sense of distributions.
	\begin{lemma}\label{l:higher_reg_sol_lin_sys}
		The tuple $(\tilde{u},\tilde{\varphi}, \tilde{v}, \tilde{\mu},\tilde{\theta})$ is a weak solution to
		\begin{alignat}{2}
		\partial_t \tilde{u} &= D \Delta \tilde{u} &\qquad \text{in } B \times (0,T]&,\label{eq:dt_diffU}\\
		- D \nabla \tilde{u} \cdot \nu & = D_u q(u,v)\tilde{u}+D_v q(u,v)\tilde{v} &  
		\text{on } \Gamma \times (0,T]&,\label{eq:dt_flux}\\
		\pd_t \tilde{\varphi} &= \SL \tilde{\mu}
		&\text{on } \Gamma \times (0,T]&,\label{eq:dt_CH1}\\
		\tilde{\mu} &= - \eps \SL \tilde{\varphi} + \eps^{-1}W''(\varphi)\tilde{\varphi} - \frac{1}{2}\tilde{\theta}
		&\text{on } \Gamma \times (0,T]&,\label{eq:dt_CH2}\\
		\frac{\delta}{4}\pd_t \tilde{\theta} &= \SL \tilde{\theta} - \frac{1}{2} \SL \tilde{\mu} + D_u q(u,v)\tilde{u}+D_v q(u,v)\tilde{v} 
		&\text{on } \Gamma \times (0,T]&\label{eq:dt_v}\\
		\tilde{\theta} &= \frac2\delta(2\tilde{v} -
		\tilde{\varphi})&\text{on } \Gamma \times (0,T].& \label{eq:dt_theta}
		\end{alignat}
	\end{lemma}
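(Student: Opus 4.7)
The plan is to pass to the limit $N \to \infty$ in the Galerkin-type system \eqref{eq:weak_dt_diffU_approx}--\eqref{eq:weak_dt_v_approx} satisfied by $(\ut,\phit,\vt,\mut,\thetat)$, using the weak convergences \eqref{eq:high_reg_weak_conv1}--\eqref{eq:high_reg_weak_conv4} together with suitable strong convergence of the nonlinear coefficients. First I would define $\tilde v := \tfrac{\delta}{4}\tilde\theta + \tfrac{1}{2}\tilde\varphi \in L^2(0,T;H^1(\Gamma))$, which is consistent with passing to the limit in \eqref{eq:weak_dt_theta_approx} and yields \eqref{eq:dt_theta}. Then I would fix an arbitrary $N_0 \in \N$ and restrict the test functions to $\omega \in C^1_c([0,T];V_\G^{N_0})$ and $\kappa \in C^1_c([0,T];V_B^{N_0})$, which are admissible for every $N \geq N_0$; the linear terms then converge immediately by \eqref{eq:high_reg_weak_conv1}--\eqref{eq:high_reg_weak_conv4}, and $\vt \rightharpoonup \tilde v$ in $L^2(0,T;H^1(\Gamma))$ follows from the definition of $\tilde v$.

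The main obstacle is the identification of the three nonlinear limits $W''(\varphi^N)\phit$, $D_u q(u^N,v^N)\ut$ and $D_v q(u^N,v^N)\vt$ (the latter two coming from the chain rule applied to $\tfrac{d}{dt}q(u^N,v^N)$). For the Cahn-Hilliard coefficient, the uniform bounds $\varphi^N \in L^2(0,T;H^3(\Gamma)) \cap L^\infty(0,T;H^1(\Gamma))$ established in the first step, combined with Aubin-Lions, yield strong convergence $\varphi^N \to \varphi$ in $L^2(0,T;W^{1,p}(\Gamma))$ for any $p < \infty$. The polynomial growth of $W''$ together with Nemytskii's theorem then gives $W''(\varphi^N) \to W''(\varphi)$ strongly in $L^r(0,T;L^r(\Gamma))$ for any $r < \infty$, and pairing with the weak convergence $\phit \rightharpoonup \tilde\varphi$ in $L^2(0,T;H^1(\Gamma)) \hookrightarrow L^2(0,T;L^q(\Gamma))$ for any $q < \infty$ yields weak convergence of the product against the fixed smooth test functions.

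For the chain-rule terms I would invoke Lemma \ref{l:higher_reg_growth_to_int}, whose proof applies verbatim to the uniformly bounded sequences $u^N, v^N$ and produces uniform bounds on $D_u q(u^N,v^N), D_v q(u^N,v^N)$ in $L^4(0,T;L^4(\Gamma))$. Together with the a.e.\ convergences $u^N \to u$ (via the trace) and $v^N \to v$ from the proof of Theorem \ref{thm:existence} and Vitali's convergence theorem, this gives strong convergence of these coefficients in $L^p(0,T;L^p(\Gamma))$ for any $p < 4$. Combined with the weak convergence $\ut \rightharpoonup \tilde u$ in $L^2(0,T;H^1(B))$, whose trace converges weakly in $L^2(0,T;L^q(\Gamma))$ for any $q < \infty$ by continuity of the trace into $H^{1/2}(\Gamma)$, and the weak convergence $\vt \rightharpoonup \tilde v$ in $L^2(0,T;H^1(\Gamma))$, the two products pass to the limit when integrated against the chosen test functions.

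Having obtained the weak form of \eqref{eq:dt_diffU}--\eqref{eq:dt_v} for all test functions in $C^1_c([0,T];V_\G^{N_0})$ and $C^1_c([0,T];V_B^{N_0})$, I would let $N_0 \to \infty$ and invoke density of $\bigcup_{N\in\N} V_\G^N$ in $H^1(\Gamma)$ and of $\bigcup_{N\in\N} V_B^N$ in $H^1(B)$ to extend the identities to arbitrary test functions in $L^2(0,T;H^1(\Gamma))$ and $L^2(0,T;H^1(B))$. The boundary term in \eqref{eq:dt_flux} appears via the trace pairing after integration by parts in \eqref{eq:dt_diffU}, exactly as in the linear part of the proof of Theorem \ref{thm:existence}. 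This completes the identification of $(\tilde u, \tilde\varphi, \tilde v, \tilde\mu, \tilde\theta)$ as a weak solution of the linearized system \eqref{eq:dt_diffU}--\eqref{eq:dt_theta}.
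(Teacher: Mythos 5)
Your argument is correct, but it passes to the limit by a genuinely different mechanism than the paper. The paper's proof spends most of its effort establishing uniform dual-space bounds on the differentiated sequence, namely $\partial_t\ut$ in $L^2(0,T;(H^1(B))')$ and $\partial_t\phit,\partial_t\thetat,\partial_t\vt$ in $L^2(0,T;H^{-1}(\Gamma))$, and then uses Aubin--Lions to obtain \emph{strong} $L^2(0,T;L^2)$ convergence of $\tr\ut$, $\phit$, $\vt$; the nonlinear terms are then handled as strong (Nemytskii limits of $W''(\varphi^N)$, $D_uq(u^N,v^N)$, $D_vq(u^N,v^N)$) times strong. You instead keep $\ut$, $\phit$, $\vt$ only weakly convergent, as in \eqref{eq:high_reg_weak_conv1}--\eqref{eq:high_reg_weak_conv4}, and upgrade the coefficients to strong convergence (via the uniform $L^4(0,T;L^4(\Gamma))$ and $L^6(0,T;L^3(\Gamma))$ bounds of Lemma \ref{l:higher_reg_growth_to_int} applied to $u^N,v^N$, the a.e.\ convergences from the proof of Theorem \ref{thm:existence}, and Vitali), so that each bilinear term converges as a strong-times-weak pairing against the fixed finite-dimensional test functions. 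This is a leaner route: it avoids the second-time-derivative estimates and the extra compactness step entirely. What the paper's route buys in exchange is the additional information that $\partial_t\tilde u$, $\partial_t\tilde\varphi$, $\partial_t\tilde\theta$ lie in the corresponding dual spaces and that $\ut,\phit,\vt$ converge strongly and a.e., which you do not recover; for the lemma as stated (identification of the limit system in weak form) this is not needed.

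Two points should be made explicit to close your write-up. First, the terms $\int_B\partial_t\ut\,\kappa$, $\int_\Gamma\partial_t\phit\,\omega$ and $\frac{\delta}{4}\int_\Gamma\partial_t\thetat\,\omega$ in \eqref{eq:weak_dt_diffU_approx}--\eqref{eq:weak_dt_v_approx} are \emph{not} controlled by \eqref{eq:high_reg_weak_conv1}--\eqref{eq:high_reg_weak_conv4}; with your choice of test functions in $C^1_c$ you must integrate by parts in time and pass to the limit in $-\int\ut\,\partial_t\kappa$ etc., so the limit equations hold with the time derivatives understood distributionally (alternatively, derive the dual bounds as the paper does). Second, the trace embedding gives $H^{1/2}(\Gamma)\hookrightarrow L^4(\Gamma)$ on the two-dimensional surface $\Gamma$, not $L^q(\Gamma)$ for all $q<\infty$; this is harmless, since $q=2$ already suffices for the strong-times-weak pairings, and your Aubin--Lions claim for $\varphi^N$ should cite the bound on $\partial_t\varphi^N=\phit$ from \eqref{eq:energy_est_dt} (or simply reuse the strong convergence \eqref{conv:phi} from the existence proof, as the paper does).
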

	\begin{proof}[Proof of Lemma \ref{l:higher_reg_sol_lin_sys}]
		We first observe that \eqref{eq:weak_dt_diffU_approx} implies a bound on $\norm{\partial_t \ut}_{L^2(0,T;(H^{1}(B))')}$ in the following way. Let $\kappa \in L^2(0,T;H^1(B))$ and denote by $P_N^B$ the orthogonal projection in $H^1(B)$ onto $V^N_B.$ Then
		\begin{align}\label{eq:est_time_der_approx_higher_reg}
		&\abs{\int_0^T \langle\partial_t \ut,\kappa\rangle_{\left(H^1(B)\right)',H^1(B)}} \leq D\int_0^T \int_{B} \abs{\nabla \ut \cdot \nabla P_N^B\kappa} + \int_0^T \int_{\G} \abs{\frac{d}{dt}q(u^N,v^N)P_N^B\kappa} \nonumber \\
		\leq& D\norm{\nabla \ut}_{L^2(0,T;L^2(B))}\norm{\kappa}_{L^2(0,T;H^1(B))} + \int_0^T \int_{\G} \abs{D_u q(u^N,v^N) \ut P_N^B\kappa} \nonumber \\ & \quad \ \qquad \qquad \qquad \qquad \quad \qquad \qquad \qquad  +\int_0^T \int_{\G} \abs{D_v q(u^N,v^N) \vt P_N^B\kappa}.
		\end{align} 
		The first term is bounded by \eqref{eq:energy_est_dt} from Lemma \ref{l:higher_reg_energy_time_der}.
		The second term can be estimated by
		\begin{align*}
		&\int_0^T \int_{\G} \abs{D_u q(u^N,v^N) \ut P_N^B\kappa} \leq \int_0^T \left( \int_{\G} \abs{D_u q(u^N,v^N) \ut}^{4/3}  \right)^{3/4} \left( \int_{\G} \abs{P_N^B\kappa}^4 \right)^{1/4} \\ 
		\leq& \left(\int_0^T \left( \int_{\G} \abs{D_u q(u^N,v^N) \ut}^{4/3}  \right)^{3/2}\right)^{1/2} \left( \int_0^T \left( \int_{\G} \abs{P_N^B\kappa}^4 \right)^{1/2} \right)^{1/2} \\ 
		\leq& \norm{D_u q(u^N,v^N) \ut}_{L^2(0,T;L^{4/3}(\G))}\norm{P_N^B\kappa}_{L^2(0,T;L^4(\G))} \\
		\leq& C \norm{D_u q(u^N,v^N) \ut}_{L^2(0,T;L^{4/3}(\G))}\norm{\kappa}_{L^2(0,T;H^1(B))}. 
		\end{align*}
		Moreover, 
		\begin{align*}
		&\norm{D_u q(u^N,v^N) \ut}_{L^2(0,T;L^{4/3}(\G))}^2 = \int_0^T \left(\int_{\G} \abs{D_u q(u^N,v^N) \ut}^{4/3} \right)^{3/2} \\
		\leq& \int_0^T \left( \int_{\G} \abs{D_u q(u^N,v^N)}^3 \right)^{2/3} \left( \int_{\G} \abs{\ut}^{12/5}\right)^{5/6} \\
		\leq& \left( \int_0^T \left( \int_{\G} \abs{D_u q(u^N,v^N)}^3 \right)^{6/3} \right)^{2/6} \left( \int_0^T \left(\int_{\G} \abs{\ut}^{12/5}\right)^{15/12} \right)^{2/3} \\
		=& \norm{D_u q(u^N,v^N)}_{L^6(0,T;L^3(\G))}^2\norm{\ut}_{L^3(0,T;L^{12/5}(\G))}^2,
		\end{align*}
		where the first term is bounded by Lemma \ref{l:higher_reg_growth_to_int} and the second term is bounded because analogously to $u \in L^p(0,T;L^{\frac{4p}{3p-4}}(\G))$ in the proof of Lemma \ref{l:higher_reg_growth_to_int} we obtain $\ut \in L^p(0,T;L^{\frac{4p}{3p-4}}(\G))$ for $2 \leq p < 4$ and choosing $p=3$ yields that $\ut$ is bounded in $L^3(0,T;L^{12/5}(\G)).$ 
		
		The last term in \eqref{eq:est_time_der_approx_higher_reg} is bounded by the same arguments, with $D_u(q(u^N,v^N))$ replaced by $D_v(q(u^N,v^N))$ and $\ut$ replaced by $\vt.$
		
		Similarly, we find bounds on $\partial_t \thetat$ and $\partial_t \phit$ in $L^2(0,T;H^{-1}(\G))$ from \eqref{eq:weak_dt_v_approx} and \eqref{eq:weak_dt_CH1_approx} respectively. These also imply a bound on $\partial_t \vt \in L^2(0,T;H^{-1}(\G)).$ 
		
		These bounds on the time derivatives  allow us to deduce 
		\begin{align*}
		\partial_t \ut \rightharpoonup \partial_t \tilde{u} \text{ in } L^2(0,T;\left(H^1(B)\right)'),& \ 
		\partial_t \phit \rightharpoonup \partial_t \tilde{\varphi} \text{ in } L^2(0,T;H^{-1}(\Gamma)), \\ \text{ and }
		\partial_t \vt \rightharpoonup \partial_t \tilde{v}& \text{ in } L^2(0,T;H^{-1}(\Gamma)).
		\end{align*}
		If we recall the proof of Theorem \ref{thm:existence}, we also see that in addition we can infer  
		\begin{align*}
		\tr{\ut} \rightarrow \tr{\tilde{u}} \text{ in } L^2(0,T;L^2(\G)),& \ 
		\phit \rightarrow \tilde{\varphi} \text{ in }  L^2(0,T;L^2(\G)), \text{ and} \\
		\vt \rightarrow \tilde{v}& \text{ in }  L^2(0,T;L^2(\G)).
		\end{align*}
		In all these cases, the convergence also holds pointwise almost everywhere.
		
		The convergence of $D_u(q(u^N,v^N))$, $D_v(q(u^N,v^N))$, and $W''(\varphi^N)$ is again a consequence of the theory of Nemytskii operators.  We thus obtain 
		\begin{align*}
		D_u(q(u^N,v^N)) &\to_{N\to\infty} D_uq(u,v) \text{ in } L^2(0,T;L^2(\G)),\\
                  D_v(q(u^N,v^N)) &\to_{N\to\infty} D_uq(u,v) \text{ in } L^2(0,T;L^2(\G)),\text{ and}\\
                  	W''(\varphi^N) &\to_{N\to\infty} W''(\varphi) \text{ in } L^2(0,T;L^2(\G)).
		\end{align*}
		Together with the foregoing results on the convergence of $\lbrace \ut \rbrace_{N\in\N}$ and $\lbrace \vt \rbrace_{N\in\N}$ this is sufficient to take the limit in the equations \eqref{eq:weak_dt_diffU_approx}, \eqref{eq:weak_dt_CH2_approx}, and \eqref{eq:weak_dt_v_approx}.
		
		% It remains to discuss the nonlinear term in . We first observe that $\varphi^N \rightarrow \varphi$ in $L^2(0,T;L^2(\G))$ implies for almost all $x \in \G$ the pointwise convergence
		% \[ W''(\varphi^N(x)) \rightarrow W''(\varphi(x))\]
		% since $W''$ is continuous. Moreover, $W''(\varphi^N)$ is bounded in $L^2(0,T;L^2(\G))$ as a consequence of \eqref{eq:int_phi_N}. This yields (up to a subsequence) the weak convergence 
		% \[ W''(\varphi^N(x)) \rightharpoonup g \text{ in } L^2(0,T;L^2(\G)) \]
		% for some $g \in L^2(0,T;L^2(\G)).$ We repeat again that pointwise and weak limit must coincide if they both exist (see \cite[Proposition 9.1c]{DB}) and deduce 
		% \[ W''(\varphi^N(x)) \rightharpoonup W''(\varphi) \text{ in } L^2(0,T;L^2(\G)). \]
		% This weak convergence together with the strong convergence $\phit \rightarrow \tilde{\varphi}$ in $L^2(0,T;L^2(\G))$ is sufficient to deduce 
		% \begin{align*}
		% \int_0^T \int_{\G} W''(\varphi^N) \phit \omega \rightarrow \int_0^T \int_{\G} W''(\varphi) \tilde{\varphi} \omega.
		% \end{align*}
		% Hence we are able to take the limit in \eqref{eq:weak_dt_CH2_approx}.
			
		The remaining terms in the equations \eqref{eq:weak_dt_diffU_approx}--\eqref{eq:weak_dt_v_approx} are linear in $(\ut,\phit,\vt,\mut,\thetat)$ which implies that the limit functions $(\tilde{u},\tilde{\varphi},\tilde{v},\tilde{\mu},\tilde{\theta})$ are weak solutions to \eqref{eq:dt_diffU}--\eqref{eq:dt_theta}, first for all test functions $\omega$ and $\kappa$ in $V_\Gamma^{N_0}$ and $V_B^{N_0}$  for some $N_0 \in \N$ respectively and by an analogous argument as at the end of the proof of Theorem \ref{thm:existence} subsequently also for all test functions $\omega \in H^1(0,T;H^1(\Gamma))$ and $\kappa \in H^1(0,T;H^1(B))$. As such, $(\tilde{u},\tilde{\varphi},\tilde{v},\tilde{\mu},\tilde{\theta})$ are a weak solution to \eqref{eq:dt_diffU}--\eqref{eq:dt_theta}.
	\end{proof}	
	\textit{Third step: Higher regularity for the full system.} We would like to apply elliptic regularity theory to equation \eqref{eq:dt_CH2}. So far we have seen that $\varphi,\tilde{\varphi} \in L^\infty(0,T;H^1(\G)).$ As before, the Sobolev Embedding Theorem thus yields $\varphi,\tilde{\varphi} \in L^\infty(0,T;L^p(\G))$ for all $1\leq p < \infty.$  The term $W''(\varphi)\tilde{\varphi}$ on the right-hand side in \eqref{eq:dt_CH2} is an element of $L^2(0,T;L^2(\G))$ because of $\abs{W''(\varphi)}\leq C(1+\abs{\varphi}^2)$ and Hölder's inequality thus implies
	\[\norm{W''(\varphi)\tilde{\varphi}}_{L^2(0,T;L^2(\G))} \leq C (\norm{\varphi}_{L^\infty(0,T;H^1(\G))}^2+1) \norm{\tilde{\varphi}}_{L^\infty(0,T;H^1(\G))}.\] Hence the right-hand side in \eqref{eq:dt_CH2} is in $L^2(0,T;L^2(\G))$ and as a first step we deduce  
	\[ \tilde{\varphi} \in L^2(0,T;H^2(\Gamma)) \hookrightarrow L^2(0,T;W^{1,p}(\G)) \text{ for all } 1\leq p < \infty \]
	from elliptic theory. We can improve this result since actually $W''(\varphi)\tilde{\varphi} \in L^2(0,T;H^1(\G))$ by the following argument. The gradient of $W''(\varphi)\tilde{\varphi}$ can be estimated by
	\begin{align*}
	&\int_0^T \int_{\G} \abs{\SG\left(W''(\varphi)\tilde{\varphi}\right)}^2 \leq \int_0^T \int_{\G} \abs{W''(\varphi)\SG\tilde{\varphi}}^2 + \int_0^T \int_{\G} \abs{W'''(\varphi)\SG\varphi\tilde{\varphi}}^2 \\
	\leq& \int_0^T \left(\int_{\G} \abs{W''(\varphi)}^4\right)^{1/2} \left(\int_{\G}\abs{\SG \tilde{\varphi}}^4\right)^{1/2} + \int_0^T \left(\int_{\G}\abs{W'''(\varphi)}^8\right)^{1/4}\left(\int_{\G}\abs{\tilde{\varphi}}^8\right)^{1/4}\left(\int_{\G}\abs{\SG \varphi}^4\right)^{1/2}, 
	\end{align*}
	which implies
	\begin{align*}
	\norm{\SG\left(W''(\varphi)\tilde{\varphi}\right)}_{L^2(0,T;L^2(\G))} \leq &\norm{W''(\varphi)}_{L^\infty(0,T;L^4(\G))}\norm{\SG\tilde{\varphi}}_{L^2(0,T;L^4(\G))} \\ &+ \norm{W'''(\varphi)}_{L^\infty(0,T;L^8(\G))}\norm{\tilde{\varphi}}_{L^\infty(0,T;L^8(\G))}\norm{\SG \varphi}_{L^2(0,T;L^4(\G))}.
	\end{align*}
	The regularity of $\varphi$ in \eqref{eq:int_SG_phi} and of $\tilde{\varphi}$ above thus imply $W''(\varphi)\tilde{\varphi} \in L^2(0,T;H^1(\G))$ and in turn we deduce
	from elliptic theory applied to \eqref{eq:dt_CH2} 
	\[ \tilde{\varphi} \in L^2(0,T;H^3(\Gamma)). \]  
	Since 
	\[ \partial_t u = \tilde{u}, \ \partial_t \varphi = \tilde{\phi}, \ \partial_t \theta = \tilde\theta, \text{ and } \partial_t \mu=\tilde{\mu} \]
	in the sense of distributions, this implies 
	\begin{align}
	&\partial_t u \in L^\infty(0,T;L^2(B))\cap L^2(0,T;H^1(B)),\label{eq:higher_reg_dt_u}\\
	&\partial_t \varphi \in L^\infty(0,T;H^1(\Gamma))\cap L^2(0,T;H^3(\Gamma)) \text{ and }\label{eq:higher_reg_dt_phi}\\
	&\partial_t \theta \in L^\infty(0,T;L^2(\Gamma))\cap L^2(0,T;H^1(\Gamma)).\label{eq:higher_reg_dt_theta}
	\end{align}
	Hence we can derive 
	\[ \mu \in L^\infty(0,T;H^3(\Gamma))\cap L^2(0,T;H^5(\Gamma)) \] because $\varphi$ and $\mu$ are weak solutions to \eqref{eq:CH1}.  
	
	Recall that $u \in L^\infty(0,T;L^2(B))\cap L^2(0,T;H^1(B))$ is a weak solution to
	\begin{alignat*}{2}
	\partial_t u &= D \Delta u &\qquad \text{in } B \times (0,T]&,\\
	- D \nabla u \cdot \nu & = q(u,v) &  
	\text{on } \Gamma \times (0,T]&,
	\end{alignat*} 
	where by the growth condition on $q(u,v)$ one can directly prove that $q(u,v) \in L^2(0,T;L^2(\Gamma))$ and from \eqref{eq:higher_reg_dt_u} we also have $\partial_t u \in L^2(0,T;L^2(B)).$ 
	Considering the elliptic problem
	\begin{alignat*}{2}
	D \Delta u &= f &\qquad \text{in } B &,\\
	- D \nabla u \cdot \nu & = g &  
	\text{on } \Gamma &,
	\end{alignat*} 
	we infer from Amann \cite[Remark 9.5 (a)]{HA} that this problem admits a solution $u \in H^{1}(B)$ for any \[ (f,g) \in H^{-1}(B) \times H^{-1/2}(\G) \] if and only if $\int_B f + \int_{\Gamma} g = 0.$ We denote the corresponding continuous solution operator by \[T:H^{-1}(B) \times H^{-1/2}(\G) \rightarrow H^1(B).\] On the other hand, it follows from the same reference or alternatively from \cite[Theorem 4.18]{WM} that $T$ is also continuous as an operator 
	\begin{equation}\label{eq:higher_reg_bulk_sol_op}
	T:L^2(B)\times H^{1/2}(\Gamma) \rightarrow H^2(B). 
	\end{equation}
	This allows us to consider the interpolation spaces 
	\begin{align*}
	H^{-1/2}(B)&=\left(H^{-1}(B),L^2(B)\right)_{1/2,2}, \\ 
	L^2(\G)&=\left(H^{-1/2}(\Gamma),H^{1/2}(\Gamma)\right)_{1/2,2}, \text{ and } \\ H^{3/2}(B)&=\left(H^1(B),H^2(B)\right)_{1/2,2}
	\end{align*}
	to deduce from the properties of interpolation spaces that $T$ must also be continuous as an operator \[ T:H^{-1/2}(B)\times L^2(\Gamma) \rightarrow H^{3/2}(B). \] 
	Given that $q(u,v) \in L^2(0,T;L^2(\Gamma))$ and $\partial_t u \in L^2(0,T;L^2(B)),$ we deduce that
	\[ u \in L^2(0,T;H^{3/2}(B)). \]
	Together with \eqref{eq:higher_reg_dt_u} we infer $u|_{\G} \in H^1(0,T;H^{1/2}(\G))$ and in particular 
	\[ u|_{\G} \in L^\infty(0,T;H^{1/2}(\G)) \hookrightarrow L^\infty(0,T;L^4(\G)) \]
	because of the Sobolev embedding theorem. Using $v=\frac{\delta}{4} \theta + \frac{1}{2} \varphi,$  \eqref{eq:higher_reg_dt_phi}, and \eqref{eq:higher_reg_dt_theta} we derive the same property for $v.$ Since $D_u q(u,v)$ and $D_v q(u,v)$ grow at most linearly by \eqref{eq:higher_reg_growth_assumption1}, we thus have 
	\[ D_u q(u,v), D_v q(u,v) \in L^\infty(0,T;L^4(\G)). \]
	We use this information to derive that
	\begin{align*}
	\norm{D_u q(u,v)\SG u}^2_{L^2(0,T;L^{4/3}(\G))}=&\int_0^T \left( \int_{\G} \abs{D_u q(u,v)}^{4/3}\abs{\SG u}^{4/3} \right)^{3/2} \\
	\leq& \int_0^T \left(  \left( \int_{\G} \abs{D_u q(u,v)}^4 \right)^{1/2}\left( \int_{\G}\abs{\SG u}^2 \right) \right)\\
	\leq& C \norm{D_u q(u,v)}_{L^\infty(0,T;L^4(\G))}^2\left( \int_0^T \int_{\G}\abs{\SG u}^2 \right)
	\end{align*}
	and 
	\begin{align*}
	\norm{D_v q(u,v)\SG v}^2_{L^2(0,T;L^{4/3}(\G))} \leq C \norm{D_v q(u,v)}_{L^\infty(0,T;L^4(\G))}^2\left( \int_0^T \int_{\G}\abs{\SG v}^2 \right),
	\end{align*}
	from which we obtain that 
	\[ \SG \left(q(u,v)\right) = D_u q(u,v)\SG u + D_v q(u,v)\SG v \in L^2(0,T;L^{4/3}(\G)).\]
	We recall that $q(u,v) \in L^2(0,T;L^2(\G))$ and deduce
	\[ q(u,v) \in L^2(0,T;W^{1,4/3}(\G)) \hookrightarrow L^2(0,T;H^{1/2}(\G)) \]
	from the Sobolev embedding theorem. Thus the mapping properties in \eqref{eq:higher_reg_bulk_sol_op} actually yield
	\begin{equation}\label{eq:higher_reg_final_reg_u}
	u \in L^2(0,T;H^2(B)).
	\end{equation}
	We have already seen that $W'(\varphi) \in L^\infty(0,T;L^2(\G))$ as well as $\theta, \mu \in L^\infty(0,T;L^2(\G)).$ As $\varphi$ is a solution to \eqref{eq:CH2}, we thus deduce
	\begin{equation}\label{eq:higher_reg_reg_phi_last_step}
	\varphi \in L^\infty(0,T;H^2(\G)).
	\end{equation} 
	Moreover, $\partial_t v \in L^\infty(0,T;L^2(\G))\cap L^2(0,T;H^1(\G))$ by \eqref{eq:higher_reg_dt_phi}, \eqref{eq:higher_reg_dt_theta}, and \eqref{eq:theta}. Since $\theta$ solves \eqref{eq:v} and in addition $q(u,v) \in L^2(0,T:L^2(\G)),$ we deduce 
	\[ \theta \in L^2(0,T;H^2(\G)) \]
	from elliptic regularity theory. Thus 
	\[ v = \frac{\delta}{4}\theta + \frac{1}{2} \varphi \in L^2(0,T;H^2(\G)).\]	
	By \eqref{eq:higher_reg_final_reg_u} we have $u \in L^2(0,T;H^{3/2}(\G))$ and in particular $\SG u \in L^2(0,T;L^4(\G)).$
	We repeat the calculations from before to deduce
	\begin{align*}
	\norm{D_u q(u,v)\SG u}^2_{L^2(0,T;L^{2}(\G))}\leq& C \norm{D_u q(u,v)}_{L^\infty(0,T;L^4(\G))}\left( \int_0^T \int_{\G}\abs{\SG u}^4 \right)^{1/2},
	\end{align*}
	i.e. $D_u q(u,v)\SG u \in L^2(0,T;L^2(\G)).$ Furthermore, $v \in L^2(0,T;H^2(\G))$ yields $D_v q(u,v)\SG v \in L^2(0,T;L^2(\G))$ in a completely analogous manner. 
	
	As a direct consequence, we infer that in fact $q(u,v) \in L^2(0,T;H^1(\G)).$ Together with $\partial_t v \in L^\infty(0,T;L^2(\G))\cap L^2(0,T;H^1(\G))$ we turn again to elliptic regularity theory to deduce
	\[ \theta \in L^\infty(0,T;H^2(\Gamma)) \cap L^2(0,T;H^3(\Gamma)). \]
	We return to the regularity of $\varphi$ in \eqref{eq:higher_reg_reg_phi_last_step}.
	$H^2(\G)$ is a Banach algebra and hence every polynomial in $\varphi$ belongs to $L^\infty(0,T;H^2(\G)).$ In particular, this holds true for $W''(\varphi).$  Therefore, we can estimate
	\begin{align*}
	\int_0^T \norm{\SG \left(W'(\varphi)\right)}^2_{H^2(\G)} = \int_0^T \norm{ W''(\varphi)\SG\varphi}^2_{H^2(\G)} \leq \norm{W''(\varphi)}_{L^\infty(0,T;H^2)}^2 \int_0^T\norm{\SG\varphi}^2_{H^2(\G)}
	\end{align*}
	where we can use \eqref{eq:int_SG_phi} to control the last term. Hence \[ W'(\varphi) \in L^2(0,T;H^3(\Gamma)), \]
	and as a consequence
	\[ \varphi \in L^2(0,T;H^5(\Gamma)) \cap L^\infty(0,T;H^2(\Gamma)), \]
	which completes the proof of Theorem \ref{thm:higher_reg}. 	
\end{proof}

\section{Longtime Existence and Stationary States for the Reduced Model} In this section we prove Theorem \ref{thm:ex_stat_sol} and Proposition \ref{prop:longtime}. Both results are based on a suitable reformulation of the reduced model. This reformulation is also the starting point for the following proof of Theorem \ref{prop:conv_OK}.

The key observation is that the mass conservation properties \eqref{eq:mass_cons} hold also for the reduced model. Thus we can decouple the system into a set of evolution equations for the mean values and a set of evolution equations for the mean value free parts. 

Projecting each equation onto its mean value free part, we arrive at
\begin{alignat}{2}
\pd_t \varphi_\G  &= \SL \mu_\G 
&&\text{on } \Gamma \times (0,T], \label{eq:dt_phi_red_gen}\\
\mu_\G  &= - \eps \SL \varphi_\G  + \eps^{-1}P_\G W'(\varphi) - \frac{\theta_\G }{2}
\qquad&&\text{on } \Gamma \times (0,T],\label{eq:mu_red_gen} \\
\pd_t v_\G  &= \SL \theta_\G  + P_\G  q(u,v)
&&\text{on } \Gamma \times (0,T], \label{eq:dt_v_red_gen}\\
\theta_\G  &=  \frac2\delta(2v_\G  -
\varphi_\G )&&\text{on } \Gamma \times (0,T], \label{eq:theta_red_gen}
\end{alignat}
together with the equations 
\begin{alignat}{2}
\frac{d}{dt}\int_B u(t) &= -\int_{\Gamma} q(u,v) &&\text{ on } (0,T], \label{eq:dt_u_red_gen}\\
\frac{d}{dt} \int_{\Gamma} \varphi &= 0 &&\text{ on } (0,T], \label{eq:dt_mean_phi_ed_gen}\\
\int_{\Gamma} v &= M - \int_{B} u &&\text{ on } (0,T], \label{eq:mean_v_red_gen} \\
\int_{\Gamma} \mu &= \int_{\Gamma} \left( \eps^{-1}W'(\varphi) - \frac{\theta}{2} \right)\qquad \quad &&\text{ on } (0,T], \label{eq:mean_mu_red_gen} \\
\int_{\Gamma} \theta &= \frac{2}{\delta}\int_{\Gamma} \left[ 2v - 1 -\varphi \right] &&\text{ on } (0,T] \label{eq:mean_theta_red_gen}
\end{alignat}
for the mean values, where again $f_\G:=P_\G f$ and $P_\G$ denotes the projection on the mean value free parts, i.e. $P_\G f := f - \frac{1}{\abs{\G}}\int_{\G} f.$
\subsection{Stationary States for the Reduced Model}\label{sec:proof_stat}
The goal of this section is to prove the existence of stationary solutions to the reduced model, i.e. Theorem \ref{thm:ex_stat_sol}. We work with the reformulation \eqref{eq:dt_phi_red_gen}--\eqref{eq:mean_theta_red_gen} from above. Since we are concerned with the existence of stationary solutions, any time derivatives in \eqref{eq:dt_phi_red_gen}--\eqref{eq:mean_theta_red_gen} are set to zero.

We recall that by Condition \ref{cond:mean_value_fix}, Equations \eqref{eq:dt_u_red_gen} and \eqref{eq:mean_v_red_gen} already determine the mean values of the cholesterol concentrations $u$ and $v.$
\begin{proof}[Proof of Theorem \ref{thm:ex_stat_sol}]
	W.l.o.g we can assume that the mean value of $\varphi$ vanishes, i.e. $\frac{1}{\abs{\G}}\int_{\Gamma} \varphi = m =0.$ This is due to the fact that we can always consider $\overline{\varphi}=\varphi-m$ and work with the translated double-well potential $\overline{W}(s)=W(s+m).$
	
	We first consider the equations \eqref{eq:dt_phi_red_gen}--\eqref{eq:theta_red_gen} for the mean value free functions $\varphi_\G, v_\G, \theta_\G, \mu_\G.$ Note that these equations do not depend on the mean value $\int_{\Gamma} \mu.$ 
	
	In particular, equation \eqref{eq:dt_phi_red_gen} implies that $\mu_\G$ is constant. Since $\int_{\G}\mu_\G=0,$ we thus directly deduce $\mu_\G=0.$ As such, equations \eqref{eq:dt_phi_red_gen}--\eqref{eq:theta_red_gen} reduce to 
	\begin{alignat*}{2}
	0 &= - \eps \SL \varphi_\G + \eps^{-1}P_\G W'(\varphi) - \frac{\theta_\G}{2}
	& \ \text{ on } \Gamma &,\\
	0 &= \SL \theta_\G + P_\G  q(u,v)
	&\text{ on } \Gamma &,\\
	\theta_\G &=  \frac2\delta(2v_\G -
	\varphi_\G)&\text{ on } \Gamma &.
	\end{alignat*}
	To begin with, recall that $W'(\varphi) = 4\varphi^3-4\varphi$ and that the projection $P_\G $ is linear.
	
	Let $Z$ denote the space
	\[ Z:=  H^1_{(0)}(\Gamma) \times H^1_{(0)}(\Gamma) \times H^1_{(0)}(\Gamma) \]
	and define for $\tau \geq 0$ by $T_\tau$ the solution operator which maps a given right-hand side $ (\tilde{\varphi},\tilde{v},\tilde{\theta}) \in Z$ onto the solution to the problem 
	\begin{alignat}{2}
	0 &= - \eps \SL \varphi_\G + 4\eps^{-1}P_\G (\left( \varphi_\G^3 - \tau\tilde{\varphi}\right)) - \frac{\theta_\G}{2}
	\qquad &&\text{on } \Gamma,\label{eq:mu_red_gen_stat_mf} \\
	0 &= \SL \theta_\G + \tau P_\G  q(S_B(\tilde{v}),\tilde{v}+S_\G(\tilde{v}))
	&&\text{on } \Gamma, \label{eq:dt_v_red_gen_stat_mf}\\
	\theta_\G &=  \frac2\delta(2v_\G -
	\varphi_\G)&&\text{on } \Gamma, \label{eq:theta_red_gen_stat_mf}
	\end{alignat}
	where $S_B$ and $S_\G$ are the operators provided by Condition \ref{cond:mean_value_fix}.
	Note that for $\tau = 0,$ the operator $T_0: Z \rightarrow Z$ maps every element of $Z$ onto the solution to
	\begin{alignat*}{2}
	0 &= - \eps \SL \varphi_\G + 4\eps^{-1}P_\G ( \varphi_\G^3 ) - \frac{\theta_\G}{2}\qquad
	&&\text{on } \Gamma, \\
	0 &= \SL \theta_\G 
	&&\text{on } \Gamma, \\
	\theta_\G &=  \frac2\delta(2v_\G -
	\varphi_\G)&&\text{on } \Gamma,
	\end{alignat*}
	which is zero, i.e $T_0$ is constant.
	\begin{lemma}\label{l:thm_stat_sol_l1}
		The operator $T_\tau: Z \rightarrow Z$ is well defined and compact.
	\end{lemma}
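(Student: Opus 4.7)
The plan is to solve the three equations defining $T_\tau$ sequentially, since the coupling is essentially triangular. First, the equation \eqref{eq:dt_v_red_gen_stat_mf} involves only $\theta_\G$ on the left, and the right-hand side $-\tau P_\G q(S_B(\tilde v),\tilde v+S_\G(\tilde v))$ depends only on the data. The sublinear growth Condition~\ref{cond:sublin_growth_q} together with $\tilde v\in H^1(\Gamma)$ and the Sobolev embedding $H^1(\Gamma)\hookrightarrow L^p(\Gamma)$ for every $p<\infty$ (in dimension two) yields this right-hand side in $L^p(\Gamma)$ for arbitrarily large $p$. Since its mean value vanishes by construction and $-\SL\colon H^1_{(0)}(\Gamma)\to H^{-1}_{(0)}(\Gamma)$ is an isomorphism, we obtain a unique solution $\theta_\G\in H^1_{(0)}(\Gamma)$, in fact $\theta_\G\in W^{2,p}(\Gamma)\cap H^1_{(0)}(\Gamma)$ by elliptic regularity. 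Once $\theta_\G$ is constructed, $v_\G:=\frac{\delta}{4}\theta_\G+\frac{1}{2}\varphi_\G$ is dictated by \eqref{eq:theta_red_gen_stat_mf}, so the only remaining task is to solve \eqref{eq:mu_red_gen_stat_mf} for $\varphi_\G$.

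For the solvability of \eqref{eq:mu_red_gen_stat_mf}, I would use the theory of monotone operators. Testing against mean value free functions makes the projector $P_\G$ disappear, so the equation is equivalent to finding $\varphi_\G\in H^1_{(0)}(\Gamma)$ with
\begin{equation*}
\int_\Gamma \varepsilon\,\SG\varphi_\G\cdot\SG\psi+\frac{4}{\varepsilon}\varphi_\G^{3}\psi\,d\mathcal{H}^2=\int_\Gamma\Bigl(\frac{4\tau}{\varepsilon}\tilde\varphi+\frac{\theta_\G}{2}\Bigr)\psi\,d\mathcal{H}^2\qquad\forall\psi\in H^1_{(0)}(\Gamma).
\end{equation*}
The operator $A\colon H^1_{(0)}(\Gamma)\to H^1_{(0)}(\Gamma)^{\ast}$ sending $\varphi\mapsto -\varepsilon\SL\varphi+\frac{4}{\varepsilon}P_\G(\varphi^3)$ is strictly monotone (since $s\mapsto s^3$ is monotone), coercive (Poincar\'e provides $\langle A\varphi,\varphi\rangle\ge c\|\varphi\|_{H^1}^{2}$ after discarding the nonnegative cubic term), continuous and demicontinuous on $H^1_{(0)}(\Gamma)$ using that $H^1\hookrightarrow L^4$ in two dimensions. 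The Browder--Minty theorem yields a unique $\varphi_\G\in H^1_{(0)}(\Gamma)$ solving the equation, which together with the previous step shows $T_\tau$ is well defined.

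The remaining claim is compactness. Given a bounded sequence $\{(\tilde\varphi_n,\tilde v_n,\tilde\theta_n)\}_{n\in\N}\subset Z$, the construction above produces uniformly bounded $\theta_{\G,n}$ in $H^1_{(0)}(\Gamma)$; in fact, the right-hand side of the $\theta$-equation is bounded in $L^p(\Gamma)$ for every $p<\infty$ by Condition~\ref{cond:sublin_growth_q}, so $\{\theta_{\G,n}\}$ is bounded in $W^{2,p}(\Gamma)$. Testing \eqref{eq:mu_red_gen_stat_mf} with $\varphi_{\G,n}$ and using the monotone structure gives a uniform $H^1$ bound on $\varphi_{\G,n}$, hence via $H^1(\Gamma)\hookrightarrow L^p(\Gamma)$ a uniform $L^p$ bound on $\varphi_{\G,n}^3$. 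Thus the right-hand side of the elliptic equation $-\varepsilon\SL\varphi_{\G,n}=\frac{4\tau}{\varepsilon}P_\G\tilde\varphi_n+\tfrac12\theta_{\G,n}-\tfrac{4}{\varepsilon}P_\G(\varphi_{\G,n}^3)$ is bounded in $L^2(\Gamma)$, yielding a uniform bound in $H^2(\Gamma)$ by elliptic regularity. The compact embedding $H^2(\Gamma)\hookrightarrow H^1(\Gamma)$ then provides convergent subsequences for $\varphi_{\G,n}$ and $\theta_{\G,n}$ in $H^1_{(0)}(\Gamma)$, while the relation $v_{\G,n}=\frac{\delta}{4}\theta_{\G,n}+\frac{1}{2}\varphi_{\G,n}$ transfers the convergence to the $v$-component. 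The main technical point is to ensure that the sublinear growth of $q$ together with the Sobolev embeddings in dimension two is enough to upgrade the regularity of the image one Sobolev order beyond that of the preimage; all other steps are standard.
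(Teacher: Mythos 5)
Your proof is correct and follows essentially the same route as the paper: solve the linear Poisson equation for $\theta_\G$ first, then the semilinear equation for $\varphi_\G$ by monotone operator theory, recover $v_\G$ algebraically, and obtain compactness from uniform $H^2$-type bounds on the image together with the compact embedding into $Z$. The only (harmless) difference is that you invoke Browder--Minty in $H^1_{(0)}(\Gamma)$ and then bootstrap to $H^2(\Gamma)$ by elliptic regularity, whereas the paper applies Br\'ezis' maximal monotone/subdifferential theory in $L^2_{(0)}(\Gamma)$, which places $\varphi_\G$ in $H^2_{(0)}(\Gamma)$ directly.
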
 
	\begin{proof}[Proof of Lemma \ref{l:thm_stat_sol_l1}]
		Since $q$ has sublinear growth by assumption \eqref{eq:sublin_growth}, $\tilde{v} \in H^1_{(0)}(\Gamma)$ is given, and $S_B$ and $S_\G$ are continuous, we see that $\tau P_\G  q(S_B(\tilde{v}),\tilde{v}+S_\G(\tilde{v})) \in L^2(\Gamma).$ Equation \eqref{eq:dt_v_red_gen_stat_mf} has therefore a unique solution $\theta_\G \in H^2_{(0)}(\Gamma).$
		
		Let $V(s):=s^4$ be the convex part of $W.$ We now define $\mathcal{G}: L^2_{(0)}(\Gamma) \rightarrow \R \cup \lbrace +\infty \rbrace$ by
		\[ \mathcal{G}(\varphi_\G):=
		\begin{cases}
		\int_{\Gamma} \frac{\varepsilon}{2}\abs{\SG \varphi_\G}^2 + \frac{1}{\varepsilon}V(\varphi_\G) \qquad &\text{if } \varphi_\G \in H^1_{(0)}(\G), \\
		+\infty &\text{else.}
		\end{cases} \]
		Then $\mathcal{G}$ is a proper, convex, and lower semi-continuous functional by Fatou's lemma. By \cite[Example 2.3.4]{BRMOP}, its $L^2-$gradient $A: L^2_{(0)}(\Gamma) \supset D(A) \rightarrow L^2_{(0)}(\Gamma)$ is therefore a maximal monotone operator, given by
		\[ A\varphi_{\G} =  \left( -\varepsilon \SL \varphi_\G + \frac{1}{\varepsilon} P_\G  V'(\varphi_\G)\right)\qquad \text{for all }\varphi_\G\in \mathcal{D}(A)= H^2_{(0)}(\G). \] 
		Its domain is $D(A) = H^2_{(0)}(\G).$
		Moreover, for all $\varphi_\G \in D(A)$
		\[ \lim_{\norm{\varphi_\G}_{L^{2}(\Gamma)} \rightarrow \infty} \frac{\mathcal{G}(\varphi_\G)}{\norm{\varphi_\G}_{L^{2}(\Gamma)}} \geq \lim_{\norm{\varphi_\G}_{L^{2}(\Gamma)} \rightarrow \infty} C \frac{\norm{\varphi_\G}^{2}_{L^2(\G)}}{\norm{\varphi_\G}_{L^2(\Gamma)}} = +\infty \]
		and by Proposition 2.14 in \cite{BRMOP} we find that for every $f \in L^2_{(0)}(\Gamma)$ there exists a $\varphi_\G \in D(A)$ which solves \[ A\varphi_\G = f.\] The solution $\varphi_\G$ is unique since $A$ is strictly monotone. Indeed, already the $L^{2}-$gradient of $\int_{\Gamma} \frac{\varepsilon}{2}\abs{\SG \varphi_\G}^2$ is strictly monotone and $\int_{\Gamma} V(\varphi_\G)$ is convex itself.  
		Choosing $f = \left( \tau\tilde{\varphi}+\frac{\theta_\G}{2} \right)$ we have $f \in L^2_{(0)}(\Gamma)$ since $\theta_\G \in H^2_{(0)}(\Gamma)$ and $\tilde{\varphi}_\G \in H^1_{(0)}(\Gamma).$ Consequently, there exists a unique $\varphi_\G \in D(A) = H^2_{(0)}(\Gamma)$ which solves \[ A\varphi_\G = \left( -\varepsilon \SL \varphi_\G + \frac{1}{\varepsilon} P_\G  V'(\varphi_\G)\right) = \left( \tau\tilde{\varphi}+\frac{\theta_\G}{2} \right), \] i.e. equation \eqref{eq:mu_red_gen_stat_mf}.
		
		To conclude the proof, we note that \[ H^2_{(0)}(\Gamma)\times H^2_{(0)}(\Gamma) \times H^2_{(0)}(\Gamma) \]embeds compactly into $Z.$ Hence $T: Z \rightarrow Z$ is indeed compact.
	\end{proof} 
	
	The proof of Theorem \ref{thm:ex_stat_sol} is now based on a fixed point argument for $T_1.$ By the Leray-Schauder theorem, we have a solution for the fixed point equation
	\[ T_1 \vectthree{\varphi_\G}{v_\G}{\theta_\G} = \vectthree{\varphi_\G}{v_\G}{\theta_\G} \]
	if we can prove uniform a priori estimates for solutions to
	\begin{equation}\label{eq:hom_eq}
	T_\tau\vectthree{\varphi_\G}{v_\G}{\theta_\G} = \vectthree{\varphi_\G}{v_\G}{\theta_\G},
	\end{equation} 
	where $\tau \in (0,1).$
	\begin{lemma}\label{l:thm_stat_sol_l2}
		Let $\tau \in (0,1)$ and let $(\varphi_\G, v_\G, \theta_\G)$ be a solution to \eqref{eq:hom_eq}. 
		Then 
		\begin{equation}\label{eq:a_priori_fp1}
		\int_{\Gamma} \abs{\SG \varphi_\G}^2 + \int_{\Gamma} \varphi_\G^4 + \int_{\Gamma} \abs{\SG \theta_\G}^2 \leq C(\varepsilon, \Gamma, u)
		\end{equation}
		and
		\begin{equation}\label{eq:a_priori_fp2}
		\int_{\Gamma} \abs{\SG v_\G}^2 \leq C(\varepsilon, \Gamma, u, \delta)
		\end{equation}
		Both estimates are uniform in $\tau.$
	\end{lemma}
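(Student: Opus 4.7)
The plan is to test the three equations of the fixed-point system \eqref{eq:mu_red_gen_stat_mf}--\eqref{eq:theta_red_gen_stat_mf} against $\varphi_\G$ and $\theta_\G$ and exploit that all unknowns are mean-value free, so the projection $P_\G$ disappears. The estimates will be closed in a cascade: first I bound $\SG\theta_\G$ in terms of $v_\G$, then $\SG\varphi_\G$ and $\|\varphi_\G\|_{L^4}^4$ in terms of $\theta_\G$, and finally decouple using the algebraic identity \eqref{eq:theta_red_gen_stat_mf} and Poincar\'{e}'s inequality (applicable because every unknown has vanishing mean).

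First I would test \eqref{eq:dt_v_red_gen_stat_mf} with $\theta_\G$, use that $\int_\G P_\G q \cdot\theta_\G = \int_\G q\,\theta_\G$, apply Cauchy--Schwarz, and invoke the sublinear growth \eqref{eq:sublin_growth} together with the continuity of $S$ to derive
\[
\|\SG\theta_\G\|_{L^2(\G)}^2 \leq C(\G,u)\big(1+\|v_\G\|_{L^2(\G)}^{1/\alpha}\big)\,\|\theta_\G\|_{L^2(\G)}.
\]
After applying Poincar\'{e}'s inequality to the mean-free $\theta_\G$, this yields $\|\SG\theta_\G\|_{L^2(\G)} \leq C(\G,u)(1+\|v_\G\|_{L^2(\G)}^{1/\alpha})$. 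Testing \eqref{eq:mu_red_gen_stat_mf} with $\varphi_\G$ and noting that $\int_\G\varphi_\G=0$, I would obtain
\[
\eps\|\SG\varphi_\G\|_{L^2(\G)}^2 + 4\eps^{-1}\|\varphi_\G\|_{L^4(\G)}^4 = 4\tau\eps^{-1}\!\int_\G\varphi_\G^2 + \tfrac12\!\int_\G\theta_\G\varphi_\G,
\]
and then Young's inequality absorbs $\int\varphi_\G^2$ into $\int\varphi_\G^4$ and splits $\int\theta_\G\varphi_\G$ so that
\[
\eps\|\SG\varphi_\G\|_{L^2(\G)}^2 + \eps^{-1}\|\varphi_\G\|_{L^4(\G)}^4 \leq C(\eps,\G)\big(1+\|\SG\theta_\G\|_{L^2(\G)}^2\big),
\]
again using Poincar\'{e} on $\theta_\G$; here the $\tau$-dependent terms are controlled uniformly in $\tau\in(0,1)$.

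To close the coupling I use \eqref{eq:theta_red_gen_stat_mf}, which gives $v_\G=\tfrac{\delta}{4}\theta_\G+\tfrac{1}{2}\varphi_\G$ and hence, after Poincar\'{e},
\[
\|v_\G\|_{L^2(\G)} \leq C(\delta)\big(\|\SG\theta_\G\|_{L^2(\G)}+\|\SG\varphi_\G\|_{L^2(\G)}\big).
\]
Setting $X:=\|\SG\theta_\G\|_{L^2(\G)}^2$, substitution of this into the bound from the first step and combination with the second step produce an inequality of the form
\[
X^{1/2} \leq C(\eps,\delta,\G,u)\big(1+X^{1/(2\alpha)}\big).
\]
Since $\alpha>1$ gives $1/(2\alpha)<1/2$, Young's inequality absorbs the right-hand power into the left-hand side, leaving $X\leq C$. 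The estimate \eqref{eq:a_priori_fp1} then follows immediately from the second step, and \eqref{eq:a_priori_fp2} follows from $\SG v_\G=\tfrac\delta4\SG\theta_\G+\tfrac12\SG\varphi_\G$.

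The main technical obstacle is not the testing procedure itself but verifying that the exponent $\alpha>1$ from Condition \ref{cond:sublin_growth_q} is exactly sharp enough to absorb $X^{1/(2\alpha)}$ into $X^{1/2}$, so that the cascade actually closes; additionally some care is needed to ensure that the continuity of $S:H^1_{(0)}(\G)\to\R^2$ is strong enough to absorb $|S_B(v_\G)|,|S_\G(v_\G)|$ into the constant $C(\G,u)$, which is the reason the lemma's constants may depend on the ambient cholesterol level $u$.
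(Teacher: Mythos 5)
Your proposal is correct and follows essentially the same route as the paper: both test \eqref{eq:mu_red_gen_stat_mf} with $\varphi_\G$ and \eqref{eq:dt_v_red_gen_stat_mf} with $\theta_\G$, use the sublinear growth $\abs{q}\leq C(1+\abs{u}^{1/\alpha}+\abs{v}^{1/\alpha})$, the identity $v_\G=\tfrac{\delta}{4}\theta_\G+\tfrac12\varphi_\G$, Poincar\'e for mean-free functions, and Young's inequality with $\alpha>1$ to close, treating the mean values $S_B(v_\G),S_\G(v_\G)$ as given constants exactly as the paper does (hence the $u$-dependence of the constant). The only difference is bookkeeping: the paper adds the two tested identities and absorbs all terms at once with small parameters $\eta,\gamma,\rho$, while you close sequentially via the scalar inequality $X^{1/2}\leq C\bigl(1+X^{1/(2\alpha)}\bigr)$ for $X=\norm{\SG\theta_\G}_{L^2(\G)}^2$; both are valid.
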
	
	\begin{proof}[Proof of Lemma \ref{l:thm_stat_sol_l2}]
		We multiply equation \eqref{eq:mu_red_gen_stat_mf} by $\varphi_\G$ and equation \eqref{eq:dt_v_red_gen_stat_mf} by $\theta_\G.$ Taking the sum of both equations and integrating over $\Gamma$ yields
		\begin{align*}
		\int_{\Gamma} \abs{\SG \theta_\G}^2 + &\varepsilon \int_{\Gamma} \abs{\SG \varphi_\G}^2 + \frac{4}{\varepsilon} \int_{\Gamma}\varphi_\G^4  \\ &=  - \frac{1}{2}\int_{\Gamma} \theta_\G\varphi_\G + \tau \int_{\Gamma} \varphi_\G^2 + \tau \int_{\Gamma} q(u,v_\G)\theta_\G.	
		\end{align*}
		We use that $\tau \in (0,1)$ and Young's inequality for $\eta,\gamma > 0$ to deduce
		\begin{align*}
		\int_{\Gamma} \abs{\SG \theta_\G}^2 + &\varepsilon \int_{\Gamma} \abs{\SG \varphi_\G}^2 + \frac{4}{\varepsilon} \int_{\Gamma}\varphi_\G^4 \\ \leq&  \left(C(\eta) + 1 \right)\int_{\Gamma} \varphi_\G^2 + \eta \int_{\Gamma} \theta_\G^2 + \abs{\int_{\Gamma} q(u,v_\G)\theta_\G} \\
		\leq&  \left(C(\eta) + 1 \right)\gamma\int_{\Gamma} \varphi_\G^4 + \eta C \int_{\Gamma} \abs{\SG \theta_\G}^2 + C(\eta,\gamma,\Gamma) + \abs{\int_{\Gamma} q(u,v_\G)\theta_\G}.
		\end{align*}
		For $\eta$ sufficiently small, this inequality implies
		\begin{align*}
		\int_{\Gamma} \abs{\SG \theta_\G}^2 + &\varepsilon \int_{\Gamma} \abs{\SG \varphi_\G}^2 + \frac{4}{\varepsilon} \int_{\Gamma}\varphi_\G^4 \\ \leq& \left(C(\eta) + 1 \right)\gamma\int_{\Gamma} \varphi_\G^4 + C(\eta,\gamma,\Gamma) + \abs{\int_{\Gamma} q(u,v_\G)\theta_\G}.
		\end{align*}
		Subsequently, we choose $\gamma$ sufficiently small to infer
		\begin{align*}
		\frac{1}{2}\int_{\Gamma} \abs{\SG \theta_\G}^2 + \varepsilon \int_{\Gamma} \abs{\SG \varphi_\G}^2 + \frac{2}{\varepsilon} \int_{\Gamma}\varphi_\G^4 \leq C(\eta,\gamma,\Gamma) + \abs{\int_{\Gamma} q(u,v_\G)\theta_\G}.
		\end{align*}
		By the assumptions \eqref{eq:sublin_growth} on $q$ we can estimate the right-hand side in the above equation by
		\begin{align*}
		\abs{\int_{\Gamma} q(u,v_\G)\theta_\G} \leq C \int_{\Gamma} \abs{\theta_\G}\left( 1 + \abs{u}^{1/\alpha} + \abs{v_\G}^{1/\alpha}\right).
		\end{align*} 
		From Young's inequality we deduce
		\begin{align*}
		\int_{\Gamma} \abs{\theta_\G} \abs{v_\G}^{1/\alpha} \leq C \int_{\Gamma} \abs{\theta_\G}^{\frac{\alpha+1}{\alpha}}+\int_\Gamma\abs{v_\G}^{\frac{\alpha+1}{\alpha}} 
		\leq C(\rho) + \rho \left( \int_\Gamma \abs{\theta_\G}^2+\int_\Gamma \abs{v_\G}^2\right)
		\end{align*}
		since $\frac{2\alpha}{\alpha+1} > 1 \Leftrightarrow \alpha > 1$ and using equation \eqref{eq:theta_red_gen_stat_mf} we obtain
		\begin{align*}
		\int_{\Gamma} \abs{v_\G}^2 \leq \frac{1}{2}\left( \int_\Gamma \abs{\frac{\delta}{2}\theta_\G}^2 + \int_\Gamma \abs{\varphi_\G}^2 \right).
		\end{align*}
		Since $u \in \R$ is a given constant, the estimates for the remaining terms are straightforward and Poincar\'{e}'s inequality yields
		\begin{align*}
		\int_{\Gamma} \abs{\SG \theta_\G}^2 + &\varepsilon \int_{\Gamma} \abs{\SG \varphi_\G}^2 + \frac{4}{\varepsilon} \int_{\Gamma}\varphi_\G^4 \\ \leq& C(\eta,\gamma,\Gamma,\rho) + \rho C \left( \int_\Gamma \abs{\frac{\delta}{2}\SG \theta_\G}^2 + \int_\Gamma \abs{\SG \varphi_\G}^2 \right).
		\end{align*}
		Choosing $\rho$ sufficiently small, we deduce the estimate \eqref{eq:a_priori_fp1}. Estimate \eqref{eq:a_priori_fp2} now follows directly from equation \eqref{eq:theta_red_gen_stat_mf}. 
	\end{proof}
	
	Based on these two lemmas, we now proceed with the proof of Theorem \ref{thm:ex_stat_sol}.
	
	Let $T_\tau: Z \rightarrow Z$ be as before. By Lemma \ref{l:thm_stat_sol_l1}, $T_\tau$ is compact. By Lemma \ref{l:thm_stat_sol_l2} and the Poincar\'{e} inequality we have uniform a priori estimates in $Z$ on all solutions to \eqref{eq:hom_eq}. The Leray-Schauder principle \cite[Theorem 6.A]{ZEI} (or for the Leray-Schauder mapping degree theory, \cite[Chapter 13]{ZEI}) hence guarantees the existence of a fixed point of $T_1.$ This proves the theorem.
\end{proof}
\subsection{Boundedness in Time}\label{sec:long_ex}
We now prove Proposition \ref{prop:longtime}, which is a corollary of the following lemma.
\begin{lemma}
	Assume that Condition \ref{as:growth_q_longtime} holds. Then there exist constants $C,c>0$ which do not depend on $t$ such that
	\begin{equation*}
	\frac{d}{dt}\F(v,\varphi) \leq C - c\F(v,\varphi)\qquad \text{for all }t\in (0,\infty). 
	\end{equation*}
\end{lemma}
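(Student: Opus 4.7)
The plan is to combine the standard energy identity for the reduced model with a coercivity estimate that bounds the energy from above by the dissipation. Since $\mu=\delta\F/\delta\varphi$ and $\theta=\delta\F/\delta v$, testing \eqref{eq:dt_phi_red_gen} with $\mu$ and \eqref{eq:dt_v_red_gen} with $\theta$, together with \eqref{eq:mu_red_gen} and \eqref{eq:theta_red_gen}, yields the identity
\[
\frac{d}{dt}\F(v,\varphi)=-\norm{\SG\mu}_{L^2(\G)}^2-\norm{\SG\theta}_{L^2(\G)}^2+\int_\G\theta\,q(u,v).
\]
The goal thus splits into (i) absorbing the exchange term into the dissipation plus a term linear in $\F$ with arbitrarily small coefficient, and (ii) showing that the dissipation controls the energy itself.

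For (i), Condition \ref{as:growth_q_longtime} gives $\norm{u}_{L^\infty(0,\infty)}\le C$; combined with the conservation laws $\bar\varphi\equiv m/|\G|$ and $\bar v=(M-|B|u)/|\G|$, the mean $\bar\theta=\tfrac{2}{\delta}(2\bar v-1-\bar\varphi)$ is uniformly bounded in $t$, so Poincar\'e's inequality yields $\norm{\theta}_{L^2}^2\le C(\norm{\SG\theta}_{L^2}^2+1)$. Together with $|v|\le C(1+|\theta|+|\varphi|)$ from \eqref{eq:theta_red_gen} and the standard bound $\varphi^2\le C(1+W(\varphi))$, this implies $\norm{v}_{L^2}^2\le C(1+\norm{\SG\theta}_{L^2}^2+\F)$. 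Invoking the sublinear bound $|q(u,v)|\le C(1+|v|^{1/\alpha})$ with $\alpha>1$ from Condition \ref{cond:sublin_growth_q} and applying H\"older and Young repeatedly (the key arithmetical facts being $1+1/\alpha<2$ and $\F^{1/\alpha}\le\eta\F+C_\eta$), I expect to obtain, for any $\eta>0$,
\[
\Bigl|\int_\G\theta\,q(u,v)\Bigr|\le \eta\bigl(\norm{\SG\theta}_{L^2}^2+\F(v,\varphi)\bigr)+C_\eta.
\]

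For (ii), I claim
\[
\norm{\SG\mu}_{L^2(\G)}^2+\norm{\SG\theta}_{L^2(\G)}^2\ge c_0\,\F(v,\varphi)-C_0.
\]
The contribution $\tfrac{\delta}{8}\norm{\theta}_{L^2}^2$ to $\F$ is handled by the Poincar\'e estimate above. For the Ginzburg-Landau part, I would test \eqref{eq:mu_red_gen} against $\varphi-\bar\varphi$ to obtain
\[
\eps\norm{\SG\varphi}_{L^2}^2+\eps^{-1}\int_\G W'(\varphi)(\varphi-\bar\varphi)=\int_\G\mu_\G(\varphi-\bar\varphi)+\tfrac12\int_\G\theta(\varphi-\bar\varphi),
\]
and invoke the standard Cahn-Hilliard inequality $\int_\G W'(\varphi)(\varphi-\bar\varphi)\ge c\int_\G W(\varphi)-C$, which holds for $W(s)=(1-s^2)^2$ provided the conserved mean satisfies $|\bar\varphi|<1$. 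Combined with $\norm{\mu_\G}_{L^2}\le C\norm{\SG\mu}_{L^2}$, the bound $\norm{\varphi-\bar\varphi}_{L^2}^2\le C(1+\F)$, and Young's inequality, the right-hand side is absorbed into $\eta\F+C_\eta(\norm{\SG\mu}_{L^2}^2+\norm{\SG\theta}_{L^2}^2)$; a small choice of $\eta$ yields the claimed coercivity.

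Plugging (i) and (ii) into the energy identity gives $\frac{d}{dt}\F\le -c\F+C$ after a final small choice of $\eta$. The main obstacle is step (ii): controlling the \emph{full} energy, and not merely its mean-value-free projection, forces the Cahn-Hilliard-type coercivity for $W$ and implicitly requires $|m/|\G||<1$, which is the natural admissibility condition on the conserved lipid mass for long-time convergence to a two-phase configuration.
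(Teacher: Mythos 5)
Your argument is essentially the paper's proof: the same energy identity, the same estimate of the exchange term via the sublinear growth of $q$ combined with the uniform bound on $u$ (the paper gets $C\F^\beta$ with $\beta<1$ and then applies Young, which is equivalent to your small-$\eta$ absorption), and the same coercivity step obtained by testing the equation for $\mu_\G$ with $\varphi_\G$ and using $W'(s)(s-m)\ge c_0W(s)-c_1$; your control of the mean of $\theta$ through mass conservation and Condition \ref{as:growth_q_longtime} is a clean version of what the paper does. One correction: the restriction $\abs{m}/\abs{\G}<1$ that you single out as the main obstacle is not needed. Since $W$ is quartic, the pointwise inequality $W'(s)(s-m)\ge c_0W(s)-c_1$ holds for every fixed $m\in\R$ with constants depending only on $m$ (the term $4s^4$ dominates $-4ms^3-4s^2+4ms$ for any $c_0<4$), and the constants $C,c$ in the lemma are allowed to depend on the initial data, hence on $m$; so your proof goes through without any admissibility condition on the conserved lipid mass, exactly as in the paper.
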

\begin{proof}
	We calculate
	\begin{align}\label{eq:long_time_derivativeF}
	\frac{d}{dt} \F(v,\varphi) = -\int_{\Gamma} \abs{\SG \mu_\G}^2 - \int_{\Gamma} \abs{\SG \theta_\G}^2 + \int_{\Gamma} \theta q(u,v)\quad \text{for all }t\in (0,\infty).
	\end{align}
	The last term can be estimated by
	\begin{align}
	\abs{\int_{\Gamma} \theta q(u,v)} &\leq C\left( \int_{\Gamma} \abs{\theta} \abs{1+\abs{u}^{1/\alpha}+\abs{v}^{1/\alpha}} \right) \nonumber \\
	&\leq C \left( \int_{\Gamma} \abs{\theta} + \int_{\Gamma} \abs{\theta}\abs{u}^{1/\alpha} +\int_{\Gamma} \abs{\theta}\abs{v}^{1/\alpha}\right). \label{eq:long_time_q1}
	\end{align}
	As before we find by Young's inequality
	\begin{align*}
	\int_{\Gamma} \abs{\theta}\abs{v}^{1/\alpha} &\leq C\left( \int_{\Gamma} \abs{\theta}^{\frac{\alpha+1}{\alpha}} + \int_{\Gamma} \abs{v}^{\frac{\alpha+1}{\alpha}} \right).
	\end{align*}
	We note that $\frac{\alpha+1}{\alpha}>1$ and hence conclude by Jensen's inequality
	\begin{align*}
	\int_{\Gamma} \abs{\theta}\abs{v}^{1/\alpha} &\leq C \int_{\Gamma} \abs{\theta}^{\frac{\alpha+1}{\alpha}} + C \int_\Gamma \abs{\frac{\delta}{4} \theta + \frac{\varphi+1}{2}}^{\frac{\alpha+1}{\alpha}} \\
	&\leq \int_{\Gamma} \abs{\theta}^{\frac{\alpha+1}{\alpha}} + C(\alpha) \int_\Gamma \abs{\frac{\delta}{4} \theta}^{\frac{\alpha+1}{\alpha}} + C(\alpha) \int_\Gamma \abs{\frac{\varphi+1}{2}}^{\frac{\alpha+1}{\alpha}}
	\end{align*}
	where we have also used that $v=\frac{\delta}{4} \theta + \frac{\varphi+1}{2}.$
	Since $\abs{\Gamma}<\infty,$ Hölder's inequality yields
	\begin{align*}
	\int_{\Gamma} \abs{\theta}\abs{v}^{1/\alpha} &\leq C(\delta) \left( \int_{\Gamma} \abs{\theta}^2\right)^{\frac{\alpha+1}{2\alpha}} + C \left(\int_{\Gamma} \abs{\varphi+1}^2\right)^{\frac{\alpha+1}{2\alpha}} 
	\end{align*}
	If we take into account that $u(t)\in \R$ is uniformly bounded in $t$ by Condition \ref{as:growth_q_longtime} and use Hölder's inequality to estimate the remaining terms in \eqref{eq:long_time_q1}, we arrive at
	\begin{align*}
	\abs{\int_{\Gamma} \theta q(u,v)} &\leq C_1 \left(\int_{\Gamma} \abs{\theta}^2 \right)^{1/2} + C(\delta)\left( \int_{\Gamma} \abs{\theta}^2 \right)^{\frac{\alpha+1}{2\alpha}} + C_2 \left( \int_{\Gamma} \abs{\varphi+1}^2 \right)^{\frac{\alpha+1}{2\alpha}} +C_3 \\ &\leq C(\delta)\left( \left(\int_{\Gamma} \abs{\theta}^2 \right)^{1/2} + \left( \int_{\Gamma} \abs{\theta}^2 \right)^{\frac{\alpha+1}{2\alpha}} + \left( \int_{\Gamma} \abs{\varphi+1}^2 \right)^{\frac{\alpha+1}{2\alpha}} +1\right).
	\end{align*}
	We define $\beta := \max{\lbrace \frac{1}{2},\frac{\alpha+1}{2\alpha}\rbrace}<1$. Using $\abs{\Gamma}<\infty$ again and Hölder's inequality we arrive at
	\begin{align*}
	\abs{\int_{\Gamma} \theta q(u,v)} &\leq C(\delta) \left( \left( \int_{\Gamma} \abs{\theta}^2 \right)^\beta + \left( \int_{\Gamma} \abs{\varphi+1}^2 \right)^\beta +1\right),
	\end{align*}
	which implies
	\begin{align}\label{eq:long_time_q2}
	\abs{\int_{\Gamma} \theta q(u,v)} &\leq C(\delta) \F(v,\varphi)^\beta +C
	\end{align}
	since $\beta < 1.$ 
	If we multiply equation \eqref{eq:mu_red_gen} by $\varphi_\G=\varphi-\fint_\Gamma \varphi $ and integrate over $\Gamma$ we obtain
	\begin{align*}
	\int_{\Gamma} \mu\varphi_\G + \frac{1}{2}\int_{\Gamma} \theta \varphi_\G = \varepsilon \int_{\Gamma} \abs{\SG \varphi_\G}^2 + \frac{1}{\varepsilon}\int_{\Gamma} W'(\varphi)\varphi_\G.
	\end{align*}
	The left-hand side can be estimated by
	\begin{align*}
	\int_{\Gamma} \mu\varphi_\G + \frac{1}{2}\int_{\Gamma} \theta \varphi_\G &\leq \frac{\varepsilon}{2} \int_{\Gamma} \abs{\SG \varphi_\G}^2 + C\int_{\Gamma} \abs{\SG\mu}^2 + C\int_{\Gamma} \abs{\SG\theta}^2.
	\end{align*}
	The double-well potential $W$ fulfils $W'(s)(s-m) \geq c_0 W(s) - c_1$ for $c_0, c_1 > 0.$ Thus the right-hand side above can be estimated from below by
	\begin{align*}
	\varepsilon \int_{\Gamma} \abs{\SG \varphi_\G}^2 + \frac{1}{\varepsilon}\int_{\Gamma} W'(\varphi)\varphi_\G &\geq \left(  \int_{\Gamma} \varepsilon \abs{\SG \varphi_\G}^2 + \frac{c_0}{\varepsilon} \int_{\Gamma} W(\varphi) \right) - \tilde{c}.
	\end{align*}
	Both estimates imply
	\begin{align}\label{eq:pot_control_energy}
	-\int_{\Gamma} \abs{\SG\mu}^2 -\int_{\Gamma} \abs{\SG\theta}^2 &\leq -C\left(\int_{\Gamma} \varepsilon \abs{\SG \varphi_\G}^2 + \frac{1}{\varepsilon} \int_{\Gamma} W(\varphi) \right) + \tilde{c}.
	\end{align}
	Next we observe that \[ \abs{\int_{\G} \theta \ } \leq \abs{\G}^{1/2}\left( \int_{\G} \abs{\theta}^2\right)^{1/2} \leq \abs{\G} + \int_{\G} \abs{\theta}^2 \leq \frac{2}{\delta}\F(v,\varphi) + C(\G). \]
	Thus by Poincar\'{e}'s inequality \[-\int_{\Gamma} \abs{\theta}^2 \geq - C\left(\int_{\Gamma} \abs{\SG \theta_\G}^2 + \F(v,\varphi)\right)-C(\G)\]
	and consequently
	\begin{align}\label{eq:pot_control_energy2}
	-\int_{\Gamma} \abs{\SG\mu}^2 -\int_{\Gamma} \abs{\SG\theta}^2 &\leq -C\left(\int_{\Gamma} \varepsilon \abs{\SG \varphi_\G}^2 + \frac{1}{\varepsilon} \int_{\Gamma} W(\varphi) + \int_{\Gamma} \frac{\delta}{2} \abs{\theta}^2\right) + \tilde{c}.
	\end{align} 
	Using \eqref{eq:long_time_q2} and $\eqref{eq:pot_control_energy2},$ we deduce
	\begin{align*}
	\frac{d}{dt} \F(v,\varphi) \leq C(\delta) \F(v,\varphi)^\beta - C\F(v,\varphi) + \tilde{c} 
	\end{align*}
	from \eqref{eq:long_time_derivativeF}. Finally Young's inequality allows us to deduce
	\begin{equation*}
	\frac{d}{dt}\F(v,\varphi) \leq C - c\F(v,\varphi),
	\end{equation*}
	which finishes the proof.
\end{proof}
\begin{proof}[Proof of Proposition \ref{prop:longtime}]The proposition is a direct corollary of the foregoing lemma. 
\end{proof}

\section{Convergence to the Ohta-Kawasaki equations as $\delta \rightarrow 0$}\label{sec:OK}

We are now interested in the limit process $\delta \rightarrow 0$ for the reduced model in the special case $q(u,v)=c_1u(1-v)-c_2v.$ In the following we will show that, if we set $\sigma = \theta_\G - \frac{1}{2}\mu_\G$ and send $\delta$ to zero in \eqref{eq:dt_phi_red_gen}--\eqref{eq:theta_red_gen}, we arrive at the limit problem
\begin{align}
\partial_t \varphi_\G &= \SL \mu_\G, \label{eq:OK_var1}\\
\frac{5}{4}\mu_\G &=  -\varepsilon \SL \varphi_\G + \frac{1}{\varepsilon} P_\G W'(\varphi_\G) - \frac{1}{2} \sigma, \\
\SL \sigma &= \frac{c_1u(t)+c_2}{2} \varphi_\G, \\
\int_{\Gamma} \sigma &= 0, \label{eq:OK_var4}
\end{align} 
which is a variant of the well-known Ohta-Kawasaki system which arises in the modelling of diblock copolymers, see \cite{OK} and \cite{ChR}.

We emphasize that the system we recover in the limit $\delta \searrow 0$ for the reduced model differs slightly from this system and in particular includes the time dependent factor $\frac{c_1u(t)+c_2}{2}$ in the equation for $\sigma.$ The function $u$ is given as the solution to the ordinary differential equation \eqref{eq:ode_u} and due to Remark \ref{rem:boundedness_u_red_model}, $u$ is bounded for all times if \[ u(0) \in [0,\abs{B}^{-1}M]. \] 
In particular, $u(t) \rightarrow u_\infty$ for $t \rightarrow \infty,$ where $ u_\infty $ is the positive zero of the right-hand side in \eqref{eq:ode_u}.

The main purpose of this section is the proof of Theorem \ref{prop:conv_OK}. Before we begin, we investigate the reformulation of the reduced problem \eqref{eq:dt_phi_red_gen}--\eqref{eq:mean_theta_red_gen} in the non-equilibrium case $q(u,v)=c_1 u (1-v) - c_2v.$

\subsection{A Reformulation for the Reduced Model in the Case $q=c_1 u (1-v) - c_2v$}
The explicit form of the exchange term $q$ allows us to eliminate $v$ in the equations \eqref{eq:dt_phi_red_gen}--\eqref{eq:mean_theta_red_gen}. In particular, \eqref{eq:flux_red} reduces to an explicit ODE for $u$ which does not depend on $\int_{\Gamma} v.$  

Let $(u,v,\varphi,\mu,\theta)$ be a solution to the reduced model. 

We use equation \eqref{eq:mean_theta_red_gen} to calculate
\begin{align*}
\int_{\Gamma} q(u(t),v(x,t)) = c_1 u(t) - (c_1 u(t) + c_2)\int_{\Gamma}\left( \frac{\delta}{4} \theta + \frac{1+\varphi}{2}\right)
\end{align*}
where we have used that $v=\frac{\delta}{4}\theta + \frac{1+\varphi}{2}$ almost everywhere. This infers 
\begin{align*}
P_\G  q(u,v) = -(c_1 u(t) + c_2)\left[ \frac{\delta}{4}\left( \theta - \fint_\Gamma \theta \right) + \frac{1}{2}\left(\varphi - \fint_\Gamma \varphi \right) \right].
\end{align*}
Thus we can rewrite equation \eqref{eq:dt_v_red_gen} to read
\begin{equation*}
\frac{\delta}{4} \partial_t \theta_\G  = \SL \theta_\G  -\frac{1}{2}\SL\mu_\G -\frac{\delta\left( c_1 u(t) + c_2 \right)}{4}\theta_\G  - \frac{\left( c_1 u(t) + c_2 \right)}{2}\varphi_\G ,
\end{equation*}
effectively eliminating $v_\G $ from the equation. 

Moreover, equation \eqref{eq:mean_theta_red_gen} enables us to eliminate $\int_{\Gamma} v$ in the equations for the mean values.

Summing up our findings, we obtain the system 
\begin{alignat}{2}
\pd_t \varphi_\G  &= \SL \mu_\G 
&\quad &\text{on } \Gamma \times (0,T], \label{eq:phi_red_alt}\\
\mu_\G  &= - \eps \SL \varphi_\G  + \eps^{-1}P_\G W'(\varphi) - \frac{\theta_\G }{2}
&&\text{on } \Gamma \times (0,T], \label{eq:mu_red_alt}\\
\frac{\delta}{4} \partial_t \theta_\G  &= \SL \theta_\G  -\frac{1}{2}\SL\mu_\G -\frac{\delta\left( c_1 u(t) + c_2 \right)}{4}\theta_\G  - \frac{\left( c_1 u(t) + c_2 \right)}{2}\varphi_\G 
&&\text{on } \Gamma \times (0,T],\label{eq:theta_red_alt}
\end{alignat}
together with
\begin{alignat}{2}
\frac{d}{dt}\int_B u(t) &= -\frac{c_1}{\abs{B}}\left( \int_B u(t)  \right)^2 + \left( c_1\frac{M-\abs{\Gamma}}{\abs{B}} - c_2 \right)\int_B u(t) +c_2M &\quad &\text{ on } (0,T], \label{eq:u_reduced} \\
\frac{d}{dt} \int_{\Gamma} \varphi &= 0 &&\text{ on } (0,T], \\
\frac{\delta}{4}\int_{\Gamma} \theta &= M - \int_{B} u - \int_{\Gamma} \frac{1+\varphi}{2}&&\text{ on } (0,T], \\
\int_{\Gamma} \mu &= \int_{\Gamma} \left( \eps^{-1}W'(\varphi) - \frac{\theta}{2} \right) &&\text{ on } (0,T],
\end{alignat}
which is an equivalent formulation for the reduced problem \eqref{eq:flux_red}--\eqref{eq:theta_red}. We remark that \eqref{eq:u_reduced} follows directly from \eqref{eq:ode_u} if we replace the modified exchange term $\tilde{q}$ by $q.$

The proof of Theorem \ref{prop:conv_OK} relies on the following lemma.
\begin{lemma}\label{l:energy_est_q_delta}
	Let $(u,\varphi,v,\mu,\theta)$ be a weak solution to the reduced model \eqref{eq:flux_red}--\eqref{eq:theta_red}. Then the mean value free parts $(u_\G,\varphi_\G,\mu_\G,\theta_\G,v_\G)$ fulfil for all $T<\infty$
	\begin{align*}
	\sup_{t\in(0,T)} \left[ \int_\Gamma\frac{\varepsilon}{2} \abs{\SG \varphi_\G(t)}^2 \right. &+ \left.\int_\Gamma \frac{1}{\varepsilon}W(\varphi(t)) + \int_\Gamma\frac{\delta}{8} \theta_\G^2(t) \right] \\ &+ \norm{\mu_\G}^2_{L^2(0,T;H^1(\Gamma))} + \norm{\theta_\G}^2_{L^2(0,T;H^1(\Gamma))} \leq C(T,\varepsilon, c_2),
	\end{align*}
	where $C(T,\varepsilon, c_2)$ depends on the initial data but is independent of $\delta.$
\end{lemma}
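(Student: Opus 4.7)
The plan is to adapt the Galerkin-based energy method used in the proof of Theorem 1.1, applied to the mean-value-free reformulation \eqref{eq:phi_red_alt}--\eqref{eq:theta_red_alt}, and to exploit the bilinear structure of $q(u,v)=c_1u(1-v)-c_2v$ to produce an energy identity whose right-hand side is controllable uniformly in $\delta$. More precisely, I would test \eqref{eq:phi_red_alt} with $\mu_\G$, \eqref{eq:mu_red_alt} with $\partial_t\varphi$, and \eqref{eq:theta_red_alt} with $\theta_\G$. Since $\int_\Gamma\varphi\equiv m$, one has $\partial_t\varphi_\G=\partial_t\varphi$, and $\int_\Gamma P_\G W'(\varphi)\partial_t\varphi=\frac{d}{dt}\int_\Gamma W(\varphi)$. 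Using $\int_\Gamma\SG\mu_\G\cdot\SG\theta_\G=-\int_\Gamma\SL\mu_\G\,\theta_\G=-\int_\Gamma\partial_t\varphi\,\theta_\G$ to compare the two couplings, the cross terms in $\int_\Gamma\theta_\G\partial_t\varphi$ cancel after summation and one arrives at the clean identity
\begin{align*}
\frac{d}{dt}\Big[\frac{\varepsilon}{2}\int_\Gamma|\SG\varphi|^2 &+ \frac{1}{\varepsilon}\int_\Gamma W(\varphi) + \frac{\delta}{8}\int_\Gamma\theta_\G^2\Big] + \int_\Gamma|\SG\mu_\G|^2 + \int_\Gamma|\SG\theta_\G|^2 \\
&+ \frac{\delta(c_1u(t)+c_2)}{4}\int_\Gamma\theta_\G^2 = -\frac{c_1u(t)+c_2}{2}\int_\Gamma\varphi_\G\theta_\G.
\end{align*}
The same identity can be obtained more transparently by testing the $v$-equation with $\theta_\G$ and using $v=\frac{\delta}{4}\theta+\frac{1+\varphi}{2}$ to compute $\int_\Gamma q\,\theta_\G=-(c_1u+c_2)\bigl[\frac{\delta}{4}\int\theta_\G^2+\frac{1}{2}\int\varphi_\G\theta_\G\bigr]$, revealing that the structural cancellation originates from the bilinear form of $q$.

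Next, I would invoke Remark 1.8 (whose proof extends verbatim to the reduced system via \eqref{eq:u_reduced}) to conclude $u(t)\in[0,M|B|^{-1}]$ for all $t\geq 0$, so that $c_1u(t)+c_2$ is uniformly bounded by some $K=K(c_1,c_2,M,|B|)$. Consequently the fourth dissipative term on the left can simply be dropped, and the right-hand side is estimated by Poincar\'e's inequality (since $\int_\Gamma\varphi_\G=\int_\Gamma\theta_\G=0$) together with Young's inequality:
\begin{equation*}
\Bigl|\tfrac{c_1u+c_2}{2}\int_\Gamma\varphi_\G\theta_\G\Bigr|\leq C(K,\Gamma,\eta)\|\SG\varphi\|_{L^2}^2+\eta\|\SG\theta_\G\|_{L^2}^2.
\end{equation*}
Choosing $\eta=1/2$ allows the gradient term to be absorbed into the left-hand side. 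Setting $E(t):=\tfrac{\varepsilon}{2}\|\SG\varphi\|_{L^2}^2+\tfrac{1}{\varepsilon}\int W(\varphi)+\tfrac{\delta}{8}\|\theta_\G\|_{L^2}^2$, what remains is a differential inequality
\begin{equation*}
\tfrac{d}{dt}E(t)+\|\SG\mu_\G\|_{L^2}^2+\tfrac{1}{2}\|\SG\theta_\G\|_{L^2}^2\leq C(\varepsilon,K,\Gamma)\,E(t),
\end{equation*}
to which Gronwall's inequality applies, producing the $\sup_{t\in(0,T)}E(t)\leq C(T,\varepsilon,c_2)$ bound; time-integration of the dissipation then yields the $L^2(0,T;L^2(\Gamma))$ estimates on $\SG\mu_\G,\SG\theta_\G$, and Poincar\'e's inequality upgrades them to the claimed $L^2(0,T;H^1(\Gamma))$ bounds on $\mu_\G$ and $\theta_\G$.

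The main obstacle lies less in the analytic machinery than in verifying the $\delta$-independence of every constant appearing along the way. Crucially, the only $\delta$-factor in the identity multiplies a non-negative quantity and is dropped, while the Young/Poincar\'e estimate of the cross term yields a prefactor depending on $\varepsilon,c_1,c_2,M,|B|,\Gamma$ but not on $\delta$. The remaining point is the rigorous justification of the formal manipulations for weak solutions: because $\partial_t\varphi\in L^2(0,T;H^{-1}(\Gamma))$ cannot be paired directly with $\mu_\G\in L^2(0,T;H^1(\Gamma))$ pointwise in time, one should carry out the argument at the Galerkin level (as in Section~2.1), where the identity is exact, and then pass to the limit $N\to\infty$ using the weak-lower-semicontinuity of the dissipation norms and strong convergence of $\varphi^N$ (so that $W(\varphi^N)\to W(\varphi)$ a.e.\ and in $L^1$). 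The dependence of the constant on the initial data enters through $E(0)$, which combines the $\delta$-independent quantities $\|\SG\varphi_0\|_{L^2}^2$ and $\int_\Gamma W(\varphi_0)$ with $\tfrac{\delta}{8}\|\theta_\G(0)\|_{L^2}^2$; the latter is bounded under the initial-data hypothesis of Theorem~\ref{prop:conv_OK}.
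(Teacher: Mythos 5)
Your proposal is correct and reproduces the paper's core computation: testing \eqref{eq:phi_red_alt} with $\mu_\G$, \eqref{eq:mu_red_alt} with $\partial_t\varphi$, and \eqref{eq:theta_red_alt} with $\theta_\G$ gives exactly the identity \eqref{eq:energy_est_q_delta_step2} of the paper, and the boundedness of $c_1u(t)+c_2$ is obtained in both cases from the invariance of $[0,M\abs{B}^{-1}]$ for the ODE \eqref{eq:u_reduced}. Where you genuinely diverge is in closing the estimate for the cross term $\tfrac{c_1u+c_2}{2}\int_\Gamma\varphi_\G\theta_\G$: you bound $\norm{\varphi_\G}_{L^2(\G)}$ by Poincar\'e's inequality and absorb it into the energy, which yields $\tfrac{d}{dt}E\leq CE$ and, via Gronwall, a constant growing exponentially in $T$ -- entirely sufficient for the lemma as stated. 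The paper instead controls $\int_\Gamma\varphi_\G^2$ through $W(\varphi)$ and then invokes the coercivity estimate \eqref{eq:pot_control_energy} (obtained by testing the $\mu$-equation with $\varphi_\G$ and using $W'(s)(s-m)\geq c_0W(s)-c_1$), arriving at $\tfrac{d}{dt}E\leq -cE+C$; this buys a bound uniform on $(0,\infty)$, consistent with the longtime analysis of Section 4, at the price of a slightly heavier argument. Your remark on justifying the formal testing (Galerkin or regularization, then weak lower semicontinuity) is a point the paper passes over silently, so it is a welcome addition rather than a discrepancy. One caveat: your final claim that $\tfrac{\delta}{8}\norm{\theta_\G(0)}_{L^2(\G)}^2$ is bounded under the hypotheses of Theorem \ref{prop:conv_OK} is not accurate as stated, since this term equals $\tfrac{1}{2\delta}\norm{P_\G(2v_0-1-\varphi_0)}_{L^2(\G)}^2$ and is $\delta$-uniform only for well-prepared initial data; however, the paper's own Gronwall argument carries the same dependence through $E(0)$, so this is a shared imprecision rather than a gap specific to your proof.
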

\begin{proof}
	We multiply equation \eqref{eq:theta_red_alt} by $\theta_\G$ and integrate over $\Gamma$ to obtain
	\begin{align}
	\frac{\delta}{8} \frac{d}{dt}\norm{\theta_\G}_{L^2(\Gamma)}^2 =& - \int_{\Gamma} \abs{\SG \theta_\G}^2 - \frac{1}{2}\int_{\Gamma} \partial_t \varphi_\G \theta_\G \nonumber \\ &- \frac{\delta}{4}\left( c_1 u(t) + c_2 \right)\int_{\Gamma}\theta_\G^2 - \frac{\left( c_1 u(t) + c_2 \right)}{2} \int_{\Gamma} \varphi_\G \theta_\G. \label{eq:energy_est_q_delta_step1}
	\end{align} 
	Furthermore, multiplying the equation
	\[ \mu_\G = -\varepsilon \SL \varphi + \frac{1}{\varepsilon}P_\G W'(\varphi) - \frac{1}{2}\theta_\G \]
	by $\partial_t \varphi_\G$ and integrating over $\Gamma$ yields 
	\[ \frac{1}{2}\int_{\Gamma} \partial_t \varphi_\G \theta_\G = \int_{\Gamma} \abs{\SG \mu_\G}^2 + \frac{d}{dt} \int_{\Gamma}\left[  \frac{\varepsilon}{2} \abs{\SG \varphi}^2 + \frac{1}{\varepsilon}W(\varphi) \right]. \]
	Substituting this into \eqref{eq:energy_est_q_delta_step1} implies
	\begin{align}
	\frac{d}{dt} \int_{\Gamma} \left[ \frac{\varepsilon}{2} \abs{\SG \varphi}^2 + \frac{1}{\varepsilon}W(\varphi) + \frac{\delta}{8} \theta_\G^2 \right] = -&\int_{\Gamma} \abs{\SG \theta_\G}^2 - \int_{\Gamma} \abs{\SG \mu_\G}^2 \nonumber \\-& \frac{\delta}{4}(c_1u(t)+c_2)\int_{\Gamma} \theta_\G^2 - \frac{c_1u(t)+c_2}{2} \int_{\Gamma} \varphi_\G \theta_\G \label{eq:energy_est_q_delta_step2}
	\end{align}
	Since $\abs{u(t)}<C$ for all $t \in (0,\infty)$ and some $C>0$ we deduce from Young's inequality for $\beta > 0$
	\[ \abs{\frac{c_1u(t)+c_2}{2} \int_{\Gamma} \varphi_\G \theta_\G } \leq C \left( \frac{1}{2\beta} \int_{\Gamma} \varphi_\G^2 + \frac{\beta}{2} \int_{\Gamma} \theta_\G^2 \right). \]
	Hence Poincar\'{e}'s inequality implies
	\[ \abs{\frac{c_1u(t)+c_2}{2} \int_{\Gamma} \varphi_\G \theta_\G } \leq C \left( \frac{1}{2\beta} \int_{\Gamma} \varphi_\G^2 + \frac{\beta}{2} \int_{\Gamma} \abs{\SG\theta_\G}^2 \right). \]
	We choose $\beta$ sufficently small to assure $C(\beta):=1-\beta\frac{C}{2}>0$ Thus \eqref{eq:energy_est_q_delta_step2} leads to the inequality
	\begin{align*}
	&\frac{d}{dt} \int_{\Gamma} \left[ \frac{\varepsilon}{2} \abs{\SG \varphi_\G}^2 + \frac{1}{\varepsilon}W(\varphi) + \frac{\delta}{8} \theta_\G^2 \right] \\ &\leq -C(\beta)\int_{\Gamma} \abs{\SG \theta_\G}^2 - \int_{\Gamma} \abs{\SG \mu_\G}^2 \nonumber - \frac{\delta}{4}(c_1u(t)+c_2)\int_{\Gamma} \theta_\G^2 + \frac{C}{\beta}\int_{\Gamma} \varphi_\G^2 \\
	&\leq -C(\beta)\int_{\Gamma} \abs{\SG \theta_\G}^2 - \int_{\Gamma} \abs{\SG \mu_\G}^2 \nonumber - \frac{\delta}{4}(c_1u(t)+c_2)\int_{\Gamma} \theta_\G^2 + \rho \frac{C}{\beta \varepsilon}\int_{\Gamma} W(\varphi) + C(\rho,\varepsilon,\beta)
	\end{align*}
	where we have used Young's inequality with $\rho > 0$ in the second inequality.
	
	By \eqref{eq:pot_control_energy} we have \[ -C\left( \int_{\Gamma} \abs{\SG \theta_\G}^2 + \int_{\Gamma} \abs{\SG \mu_\G}^2 \right) \leq - \int_{\Gamma} \left[\frac{\varepsilon}{2} \abs{\SG \varphi_\G} + \frac{1}{\varepsilon} W(\varphi)\right] + C \] and for $\rho$ sufficiently small we thus find
	\begin{align}
	\frac{d}{dt} &\int_{\Gamma} \left[ \frac{\varepsilon}{2} \abs{\SG \varphi_\G}^2 + \frac{1}{\varepsilon}W(\varphi) + \frac{\delta}{8} \theta_\G^2 \right]  +\frac{C(\beta)}{2}\int_{\Gamma} \abs{\SG \theta_\G}^2 + \frac{1}{2}\int_{\Gamma} \abs{ \SG \mu_\G}^2 \nonumber \\ &\leq - C(\beta,\rho,\varepsilon,c_2) \left[ \int_{\Gamma} \frac{\varepsilon}{2} \abs{\SG \varphi}^2 + \frac{1}{\varepsilon}W(\varphi) + \frac{\delta}{8} \theta_\G^2 \right] + C(\rho,\varepsilon,\beta)\label{eq:energy_est_q_delta_step3}
	\end{align}
	
	We use the differential form of Gronwall's inequality (see e.g. \cite[Appendix B.2(j)]{LCE})  to deduce
	\begin{equation*}
	\sup_{t\in(0,\infty)} \int_{\Gamma} \left[ \frac{\varepsilon}{2} \abs{\SG \varphi}^2 + \frac{1}{\varepsilon}W(\varphi) + \frac{\delta}{8} \theta_\G^2 \right] \leq C
	\end{equation*}
	and therefore, after integrating \eqref{eq:energy_est_q_delta_step3} in time 
	
	\begin{align}
	\int_{\Gamma} \left[ \frac{\varepsilon}{2} \abs{\SG \varphi_\G(T)}^2 \right. &+ \left. \frac{1}{\varepsilon}W(\varphi(T)) + \frac{\delta}{8} \theta_\G^2(T) \right] \nonumber \\ &+\frac{C(\beta)}{2}\int_0^T \int_{\Gamma} \abs{\SG \theta_\G}^2 + \frac{1}{2}\int_0^T \int_{\Gamma} \abs{ \SG \mu_\G}^2 \leq C(T)\label{eq:energy_est_q_delta}
	\end{align}
	for all $T<\infty.$ This proves the assertion of the lemma.
\end{proof}

Based on this uniform estimate we can prove the main result of this section.

\begin{proof}[Proof of Proposition \ref{prop:conv_OK}]
	We first observe that the solution $u^{\delta_n}$ to equation \eqref{eq:u_reduced} is bounded for all times, see also Remark \ref{rem:boundedness_u_red_model}. Moreover, the equation is independent of $\delta$ and the bound is thus also uniform in $\delta.$ 
	
	By \eqref{eq:energy_est_q_delta} we deduce $\delta_n \norm{\theta_\G^{\delta_n}}_{L^2(0,T,L^2(\Gamma))} \leq C \delta_n \norm{\SG \theta^{\delta_n}_\G}_{L^2(0,T,L^2(\Gamma))} \leq C\delta_n,$ which yields for all $\Psi \in C_c^\infty(\Gamma_T)$ \[ \abs{\delta_n \int_{\Gamma_T} \theta^{\delta_n}_\G \partial_t \Psi} \leq \delta_n \norm{\theta^{\delta_n}_\G}_{L^2(0,T,L^2(\Gamma))} \norm{\partial_t \Psi}_{L^2(0,T,L^2(\Gamma))}, \]
	i.e $\delta_n \partial_t \theta^{\delta_n}_\G \rightarrow 0$ in the sense of distributions as $\delta_n\rightarrow 0.$ 
	At the same time, we can estimate $\norm{\delta_n \partial_t \theta^{\delta_n}_\G}_{L^2(0,T;H^{-1}(\Gamma))}$ uniformly in $\delta_n$ since by \eqref{eq:theta_red_alt} for all $\eta \in L^2(0,T;H^{1}(\Gamma))$ we find
	\begin{align*} \abs{\int_{0}^{T} \int_{\Gamma} \delta_n \partial_t \theta^{\delta_n}_\G \eta} \leq& \abs{\int_0^T \int_{\Gamma} \SG \theta^{\delta_n}_\G \cdot \SG \eta} + \abs{\int_0^T \int_{\Gamma}\frac{1}{2} \SG \mu^{\delta_n}_\G \cdot \SG \eta}  \\ & +\abs{\int_0^T \int_{\Gamma}\frac{\delta_n(c_1 u^{\delta_n}(t) + c_2)}{4} \theta^{\delta_n}_\G \eta} + \abs{\int_0^T \int_{\Gamma}\frac{c_1 u^{\delta_n}(t) + c_2}{2} \varphi^{\delta_n}_\G \eta},
	\end{align*}
	which implies
	\[ \abs{\int_{0}^{T} \int_{\Gamma} \delta_n \partial_t \theta^{\delta_n}_\G \eta} \leq C \norm{\eta}_{L^2(0,T;H^{1}(\Gamma))} \]
	by Lemma \ref{l:energy_est_q_delta} and the boundedness of $u^{\delta_n}(t).$ 
	
	In particular, $\delta_n \partial_t \theta^{\delta_n}_\G$ is bounded in $L^2(0,T;H^{-1}(\Gamma)).$ Since $L^2(0,T;H^{-1}(\Gamma))$ is a Hilbert space, it is reflexive. Hence there exists a weakly converging subsequence in $L^2(0,T;H^{-1}(\Gamma))$ and some function $\chi \in L^2(0,T;H^{-1}(\Gamma))$ such that $\delta_n \partial_t \theta^{\delta_n}_\G \rightharpoonup \chi$ in $L^2(0,T;H^{-1}(\Gamma))$ as $\delta_n \rightarrow 0.$ Since $\chi$ must coincide with the vanishing distributional limit we deduce $\chi \equiv 0.$ 
	
	Exploiting equation \eqref{eq:phi_red_alt}, we deduce similarly that $\partial_t \varphi^{\delta_n}_\G$ is bounded uniformly in $\delta_n$ in $L^2(0,T;H^{-1}(\G)).$ As such, there exists a weakly converging subsequence $\partial_t \varphi^{\delta_n}_\G \rightharpoonup \tilde{\varphi}_\G.$
	
	The bounds in Lemma \ref{l:energy_est_q_delta} also infer the weak convergence of the mean value free functions $\varphi^{\delta_n}_\G, \theta_\G^{\delta_n}$ and $\mu_\G^{\delta_n}$ in the reflexive space $L^2(0,T;H^1(\Gamma)).$ Again, this convergence is meant up to a subsequence.
	
	Calculating the distributional time derivative $\partial_t \varphi^{\delta_n}_\G$ in $\mathcal{D}'(0,T;H^{1}(\G))$ shows $\partial_t \varphi_\G = \tilde{\varphi}_\G,$ i.e (after the extraction of a subsequence) we have \[ \varphi_\G^{\delta_n} \rightharpoonup \varphi_\G \text{ in } L^2(0,T;H^1(\Gamma))\cap H^1(0,T;H^{-1}(\Gamma)).\]
	The Aubins-Lions Theorem and Sobolev embeddings yield $\varphi^{\delta_n}_\G \rightarrow \varphi_\G$ in $L^p(0,T;L^p(\G))$ for every $1\leq p<\infty$. Since $\abs{W'(s)} \leq C \left(\abs{s}^3 + 1\right)$ for all $s\in\R$, we obtain by the continuity of Nemytskii operators
        \begin{equation*}
          W'(\varphi^{\delta_n}_\G)\rightharpoonup W'(\varphi_\G) \text{ in } L^2((0,T)\times\G).
        \end{equation*}
	As a result we can pass the limits in the weak formulations of equations \eqref{eq:phi_red_alt}, \eqref{eq:mu_red_alt}, and \eqref{eq:theta_red_alt}. We obtain for all $\eta \in L^2(0,T;H^1_{(0)}(\Gamma))$
	\begin{align*}
	\int_0^T \left\langle \partial_t \varphi_\G,\eta \right\rangle &= -\int_0^T \int_{\Gamma} \SG \mu_\G \cdot \SG \eta \\
	\int_\G^T \int_{\Gamma} \left(\mu_\G + \frac{\theta_\G}{2}\right) \eta &=  \varepsilon \int_0^T \int_{\Gamma} \SG \varphi_\G \cdot \SG \eta + \frac{1}{\varepsilon} \int_0^T \int_{\Gamma} W'(\varphi_\G)\eta \\
	-\int_0^T \int_{\Gamma} \SG \theta_\G \cdot \SG \eta + \frac{1}{2}\int_0^T \int_{\Gamma} \SG \mu_\G \cdot \SG\eta &= \int_0^T \int_{\Gamma} \frac{c_1u(t)+c_2}{2} \varphi_\G \eta.
	\end{align*}
	We denote by $\sigma$ the auxiliary function $\sigma := \theta_\G-\frac{1}{2}\mu_\G$ and find that the limit functions $\varphi_\G, \theta_\G,$ and $\mu_\G$ are weak solutions to problem \eqref{eq:OK_var1}--\eqref{eq:OK_var4}. 
\end{proof}

%\bibliographystyle{plain}
%\bibliography{lit}

\begin{thebibliography}{10}

\bibitem{AD}
R.A. Adams and J.J.F. Fournier.
\newblock {\em Sobolev Spaces}.
\newblock Pure and Applied Mathematics. Elsevier Science, 2003.

\bibitem{AC}
S.~M. Allen and J.~W. Cahn.
\newblock A microscopic theory for antiphase boundary motion and its
  application to antiphase domain coarsening.
\newblock {\em Acta Metallurgica}, 27(6):1085 -- 1095, 1979.

\bibitem{HA}
H. Amann.
\newblock Nonhomogeneous linear and quasilinear elliptic and parabolic boundary
  value problems.
\newblock In {\em Function spaces, differential operators and nonlinear
  analysis ({F}riedrichroda, 1992)}, volume 133 of {\em Teubner-Texte Math.},
  pages 9--126. Teubner, Stuttgart, 1993.

\bibitem{BRMOP}
H. Br{\'{e}}zis.
\newblock {\em Op\'erateurs maximaux monotones et semi-groupes de contractions
  dans les espaces de {H}ilbert}.
\newblock North-Holland Publishing Co., Amsterdam-London; American Elsevier
  Publishing Co., Inc., New York, 1973.
\newblock North-Holland Mathematics Studies, No. 5. Notas de Matem\'atica (50).

\bibitem{BHC}
D.~Brochet, D.~Hilhorst, and X.~Chen.
\newblock Finite dimensional exponential attractor for the phase field model.
\newblock {\em Applicable Analysis}, 49:197 -- 212, 1993.

\bibitem{CA}
J.~W. Cahn.
\newblock On spinodal decomposition.
\newblock {\em Acta Metallurgica}, 9(9):795 -- 801, 1961.

\bibitem{CH}
J.~W. Cahn and J.~E. Hilliard.
\newblock Free energy of a nonuniform system. i. interfacial free energy.
\newblock {\em The Journal of Chemical Physics}, 28(2):258--267, 1958.

\bibitem{LQC}
L.-Q. Chen.
\newblock Phase-field models for microstructure evolution.
\newblock {\em Annual Review of Materials Research}, 32(1):113--140, 2002.

\bibitem{CH1d}
X. Chen.
\newblock Generation, propagation, and annihilation of metastable patterns.
\newblock {\em Journal of Differential Equations}, 206(2):399 -- 437, 2004.

\bibitem{CHL}
X. Chen, D.~Hilhorst, and E.~Logak.
\newblock Mass conserving {A}llen-{C}ahn equation and volume preserving mean
  curvature flow.
\newblock {\em Interfaces Free Bound.}, 12(4):527--549, 2010.

\bibitem{ChR}
R. Choksi and X. Ren.
\newblock On the derivation of a density functional theory for microphase
  separation of diblock copolymers.
\newblock {\em J. Statist. Phys.}, 113(1-2):151--176, 2003.

\bibitem{EZ}
C.~M. Elliott and Z. Songmu.
\newblock On the {C}ahn-{H}illiard equation.
\newblock {\em Arch. Rational Mech. Anal.}, 96(4):339--357, 1986.

\bibitem{LCE}
L.~C. Evans.
\newblock {\em Partial differential equations}, volume~19 of {\em Graduate
  Studies in Mathematics}.
\newblock American Mathematical Society, Providence, RI, second edition, 2010.

\bibitem{FSH}
J.~Fan, M.~Sammalkorpi, and M.~Haataja.
\newblock Formation and regulation of lipid microdomains in cell membranes:
  Theory, modeling, and speculation.
\newblock {\em FEBS Letters}, 584(9):1678--1684, 2010.

\bibitem{FSH2}
J. Fan, M. Sammalkorpi, and M. Haataja.
\newblock Influence of nonequilibrium lipid transport, membrane
  compartmentalization, and membrane proteins on the lateral organization of
  the plasma membrane.
\newblock {\em Physical review. E, Statistical, nonlinear, and soft matter
  physics, January 2010, Vol.81(1 Pt 1), pp.011908}, 81(1 Pt 1), 2010.

\bibitem{Fo}
L.~Foret.
\newblock A simple mechanism of raft formation in two-component fluid
  membranes.
\newblock {\em Europhysics Letters}, 71(3):508--514, 2005.

\bibitem{GKRR}
H. Garcke, J. Kampmann, A. R\"atz, and M. R\"oger.
\newblock A coupled surface-{C}ahn-{H}illiard bulk-diffusion system modeling
  lipid raft formation in cell membranes.
\newblock {\em Math. Models Methods Appl. Sci.}, 26(6):1149--1189, 2016.

\bibitem{GT}
D. Gilbarg and N.~S. Trudinger.
\newblock {\em Elliptic partial differential equations of second order}.
\newblock Classics in Mathematics. Springer-Verlag, Berlin, 2001.
\newblock Reprint of the 1998 edition.

\bibitem{GSR}
J. Gomez, F. Sagues, and R. Reigada.
\newblock {Actively maintained lipid nanodomains in biomembranes}.
\newblock {\em {PHYSICAL REVIEW E}}, {77}({2, 1}), {FEB} {2008}.

\bibitem{Diss}
J.~Kampmann.
\newblock {\em Phase separation on biological membranes}.
\newblock PhD thesis, July 2018.
\newblock urn:nbn:de:bvb:355-epub-374718.


\bibitem{LU}
O.~A. Ladyzhenskaya and N.~N. Ural'tseva.
\newblock {\em Linear and quasilinear elliptic equations}.
\newblock Translated from the Russian by Scripta Technica, Inc. Translation
  editor: Leon Ehrenpreis. Academic Press, New York-London, 1968.

\bibitem{LR}
T.~J LaRocca, P. Pathak, S. Chiantia, A. Toledo,
  John~R Silvius, Jorge~L Benach, and Erwin London.
\newblock Proving lipid rafts exist: Membrane domains in the prokaryote
  borrelia burgdorferi have the same properties as eukaryotic lipid rafts
  (lipid rafts in the prokaryote, b. burgdorferi).
\newblock {\em 2013, Vol.9(5), p.e1003353}, 9(5), 2013.

\bibitem{WM}
W. McLean.
\newblock {\em Strongly elliptic systems and boundary integral equations}.
\newblock Cambridge University Press, Cambridge, 2000.

\bibitem{NST2}
B.~Nicolaenko, B.~Scheurer, and R.~Temam.
\newblock Inertial manifold for the {C}ahn-{H}illiard model of phase
  transition.
\newblock In {\em Ordinary and partial differential equations ({D}undee,
  1986)}, volume 157 of {\em Pitman Res. Notes Math. Ser.}, pages 147--160.
  Longman Sci. Tech., Harlow, 1987.

\bibitem{NST1}
B.~Nicolaenko, B.~Scheurer, and R.~Temam.
\newblock Some global dynamical properties of a class of pattern formation
  equations.
\newblock {\em Comm. Partial Differential Equations}, 14(2):245--297, 1989.

\bibitem{NC}
A. Novick-Cohen.
\newblock The {C}ahn-{H}illiard equation.
\newblock In {\em Handbook of differential equations: evolutionary equations.
  {V}ol. {IV}}, Handb. Differ. Equ., pages 201--228. Elsevier/North-Holland,
  Amsterdam, 2008.

\bibitem{ONIY}
I. Ohnishi, Y. Nishiura, M. Imai, and Y. Matsushita.
\newblock Analytical solutions describing the phase separation driven by a free
  energy functional containing a long-range interaction term.
\newblock {\em Chaos: An Interdisciplinary Journal of Nonlinear Science, June
  1999, Vol.9(2), pp.329-341}, 9(2), 1999.

\bibitem{OK}
T. Ohta and K. Kawasaki.
\newblock Equilibrium morphology of block copolymer melts.
\newblock {\em Macromolecules}, 19(10):2621--2632, 1986.

\bibitem{PIK}
L.~J Pike.
\newblock Rafts defined: a report on the keystone symposium on lipid rafts and
  cell function.
\newblock {\em Journal of lipid research, July 2006, Vol.47(7), pp.1597-8},
  47(7), 2006.

\bibitem{RL}
R. Reigada and K. Lindenberg.
\newblock Raft formation in cell membranes: Speculations about mechanisms and
  models.
\newblock {\em Advances in Planar Lipid Bilayers and Liposomes, 2011, Vol.14,
  pp.97-127}, 14, 2011.

\bibitem{ReRo}
M. Renardy and R.~C. Rogers.
\newblock {\em An introduction to partial differential equations}, volume~13 of
  {\em Texts in Applied Mathematics}.
\newblock Springer-Verlag, New York, second edition, 2004.

\bibitem{RW}
X.~Ren and J.~Wei.
\newblock On energy minimizers of the diblock copolymer problem.
\newblock {\em Interfaces And Free Boundaries, 2003 Jun, Vol.5(2), pp.193-238},
  5(2), 2003.

\bibitem{RPVK}
T. R{\'{o}}g, M. Pasenkiewicz-Gierula, I. Vattulainen, and M.
  Karttunen.
\newblock Ordering effects of cholesterol and its analogues.
\newblock {\em BBA - Biomembranes, 2009, Vol.1788(1), pp.97-121}, 1788(1),
  2009.

\bibitem{FS}
F. Schmid.
\newblock Physical mechanisms of micro- and nanodomain formation in
  multicomponent lipid membranes.
\newblock {\em BBA - Biomembranes, April 2017, Vol.1859(4), pp.509-528},
  1859(4), 2017.

\bibitem{JS}
J. Simon.
\newblock Compact sets in the space {$L^p(0,T;B)$}.
\newblock {\em Ann. Mat. Pura Appl. (4)}, 146:65--96, 1987.

\bibitem{T}
R. Temam.
\newblock {\em Infinite-dimensional dynamical systems in mechanics and
  physics}, volume~68 of {\em Applied Mathematical Sciences}.
\newblock Springer-Verlag, New York, second edition, 1997.

\bibitem{TR_FS2}
H. Triebel.
\newblock {\em Theory of function spaces. {II}}, volume~84 of {\em Monographs
  in Mathematics}.
\newblock Birkh\"auser Verlag, Basel, 1992.

\bibitem{ZEI}
E. Zeidler.
\newblock {\em Nonlinear functional analysis and its applications. {I}}.
\newblock Springer-Verlag, New York, 1986.
\newblock Fixed-point theorems, Translated from the German by Peter R. Wadsack.

\end{thebibliography}

\end{document}